\theoremstyle{plain}
\newtheorem{theorem}{Theorem}[section]
\newtheorem{corollary}[theorem]{Corollary}
\newtheorem{proposition}[theorem]{Proposition}
\theoremstyle{definition}
\newtheorem{definition}[theorem]{Definition}
\newtheorem{example}[theorem]{Example}
\newtheorem{conjecture}[theorem]{Conjecture}
\newtheorem{remark}[theorem]{Remark}
\newcommand{\diag}{{\rm diag\,}}
\newcommand{\sign}{{\rm sign\,}}
\newcommand{\RE}{{\rm Re\,}}
\newcommand{\eins}{\leavevmode\hbox{\small1\kern-3.8pt\normalsize1}}
\begin{document}

\title[Products of Complex and Hermitian Matrices]{Products of Complex Rectangular and Hermitian Random Matrices}
\author{Mario Kieburg}\email[]{m.kieburg@unimelb.edu.au}
\affiliation{School of Mathematics and Statistics, The University of Melbourne, 813 Swanston Street, Parkville, Melbourne VIC 3010, Australia}

\newcommand{\corr}[1]{{\color{red}#1}}
\newcommand{\tim}[1]{{\color{blue}#1}}


\begin{abstract}
Products and sums of random matrices have seen a rapid development in the past decade due to various analytical techniques available. Two of these are the harmonic analysis approach and the concept of polynomial ensembles. Very recently, it has been shown for products of real matrices with anti-symmetric matrices of even dimension that the traditional harmonic analysis on matrix groups developed by Harish-Chandra et al. needs to be modified when considering the group action on general symmetric spaces of matrices. In the present work, we consider the product of complex random matrices with Hermitian matrices, in particular the former can be also rectangular while the latter has not to be positive definite and is considered as a fixed matrix as well as a random matrix. This generalises an approach for products involving the Gaussian unitary ensemble (GUE) and circumvents the use there of non-compact group integrals. We derive the joint probability density function of the real eigenvalues and, additionally, prove transformation formulas for the bi-orthogonal functions and kernels.
\ \\
{\bf Keywords:} products of independent random matrices; polynomial ensemble; 
multiplicative convolution; P\'olya frequency functions;
spherical transform; bi-orthogonal ensembles.\\
\ \\
{\bf MSC:} 15B52, 42C05
\end{abstract}

\maketitle

\section{Introduction}\label{sec:intro}

There are two fast developments in the past years regarding products and sums of random matrices. One concerns the macroscopic level densities that can be elegantly computed via the free probability  approach~\cite{Speicher2011} introduced by Voiculescu~\cite{Voiculescu1991}. The other development has the local spectral statistics in its focus for which the concept of polynomial ensembles~\cite{KZ2014} has proven fruitful, see also Definition~\ref{def:ensembles}. The latter enjoys the analytical tools of bi-orthogonal functions and determinantal point processes~\cite{Borodin1998}. Both aforementioned developments rely on the unitary (or orthogonal) bi-invariance of at least one of the two multiplied random matrices, meaning the ensemble is invariant under left- as well as right-multiplication of unitary (orthogonal) matrices. The considerations made in the present work exploits the same requirement.

Formerly, only products of complex Ginibre matrices~\cite{AB2012, AKW2013,KZ2014}, then of complex rectangular Gaussian matrices~\cite{AIK2013,IK2014,KS2014,ARRS2016,LW2016} and inverse Ginibre matrices~\cite{Forrester2014,ARRS2016}, and later of truncated unitary matrices (complex Jacobi ensemble)~\cite{ABKN2014,ARRS2016,KKS2016,LW2016} have been considered, see~\cite{AI2015} for a review on this development. These calculations have been possible due to known group integrals that are involved. Only a few years later the concept of harmonic analysis~\cite{Helgason2000} has been combined with these new developments to enlarge the class of non-classical ensembles to those of P\'olya ensembles~\cite{FKK2017,Kieburg2017,KFI2019} (originally known as polynomial ensembles of derivative type~\cite{KK2016,KK2019,KR2016}), see Definition~\ref{def:ensembles}, which also comprise some Muttalib-Borodin ensembles~\cite{Borodin1998,Muttalib1995}, for instance.

The name P\'olya ensemble has been dubbed to these ensembles due to their intimate relation to P\'olya frequency functions, that originated in approximation theory~\cite{Polya1913,Polya1915}. Because of the same importance of P\'olya frequency functions for sums of random matrices~\cite{KR2016,FKK2017}, we speak of multiplicative and additive P\'olya ensembles. These two kinds indeed differ by a subtle, but important feature which will not be specified here; we refer interested readers to~\cite{FKK2017}. Let us underline, throughout the present article we consider multiplicative P\'olya ensembles, only. Therefore, we omit the prefix ``multiplicative''.

P\'olya ensembles have two crucial advantages. Firstly, the multiplicative convolution is closed on their class so that they form a semi-group with the multiplication of a Haar distributed unitary matrix (also known as circular unitary matrices (CUE)) as the unit element. Secondly, the whole statistics essentially depends on a single univariate weight function. This fact simplifies the analysis drastically. For example, the harmonic analysis on matrix spaces reduces to its univariate counterpart. For products of complex random matrices this means that the spherical transform~\cite{Helgason2000,KK2016,KK2019} condenses to the Mellin transform in the one-dimensional case.

Very recently, products of real square matrices with real antisymmetric matrices and of quaternion matrices with anti-selfdual anti-Hermitian matrices have been studied~\cite{FILZ2019,KFI2019}. These works have followed an article on products of complex Ginibre matrices with a Gaussian unitary ensemble (GUE)~\cite{FIL2018}. Therein it has been shown that the natural group action of the matrix group $G$ on the corresponding Lie-algebra of its maximal compact subgroup might be indeed analytically feasible. To extend these results to more general ensembles, harmonic analysis seems to be again the ideal tool. Yet, for its successful application to the product of real asymmetric with real anti-symmetric matrices of even dimension a modification of the original theory by Harish-Chandra~\cite{Helgason2000} has been crucial~\cite{FILZ2019}. This can be all expected to hold true for the product of complex matrices with Hermitian matrices, as it is shown in the present work.

After introducing the necessary notation in Section~\ref{sec:pre}, we extend the concept of harmonic analysis to the set of Hermitian matrices with a fixed rank in Section~\ref{sec:spherical}. The spherical function and transform, introduced in this section, is here not anymore the one of the standard Fourier analysis, that corresponds to the additive convolution on Hermitian matrices. It reflects the multiplicative convolution and looks very similar to the original spherical function with an important difference, we need two copies of the complex plane for each of the ``Mellin-Fourier'' (frequency) parameter that encodes the sign of the eigenvalue. Furthermore we extend the harmonic analysis on the complex general linear group~\cite{Helgason2000,KK2016} to complex rectangular matrices. Quite naturally, orthogonal projections and inclusions play a particular outstanding role as has been already highlighted in~\cite{IK2014}.

Once the theoretical framework has been introduced, we consider the actual goal of the present article namely the product of complex rectangular and Hermitian random matrices. In Section~\ref{sec:mult.G}, we consider first the product of P\'olya ensembles of complex rectangular matrices with fixed Hermitian matrices. Therein, we compute the joint probability density, a corresponding set of bi-orthogonal functions, and an expression for the kernel. As a second example, we study the product of a P\'olya ensemble on the rectangular matrices with a polynomial ensemble drawn from the Hermitian matrices, in Section~\ref{sec:mult.H}. The change of the joint probability density, the bi-orthogonal functions, and the kernel will be the primary aim. Such transformation formulas have been considered in other works, too, that involve sums of random matrices~\cite{Kuijlaars2016,CKW2015,Kieburg2017} as well as products~\cite{KKS2016,KK2019,Kuijlaars2016,CKW2015}. Let us underline that all calculations are done at finite matrix dimensions, and that it is not the goal of the present work to investigate any limiting statistics. A technical work on a general product involving a GUE is currently in preparation~\cite{Kieburg2019}.

In Section~\ref{sec:conclusio}, we summarize and give a brief outlook on open problems. The details of the more extensive proofs are given in the Appendices.

\section{Preliminaries}\label{sec:pre}

In the ensuing sections, we make use of the particular matrix sets and $L^1$-function which we define here. For this purpose, we first of all introduce  the orthogonal projection/inclusion matrices of size $n\times m$,
\begin{equation}\label{proj-incl}
\Pi_{n,m}=\left\{\begin{array}{cl} \text{projection onto first } n \text{ rows}, & n<m, \\ \text{identity matrix } \eins_n, & n=m, \\ \text{inclusion into first } n \text{ rows}, & n>m. \end{array}\right.
\end{equation} 
Moreover we denote the Hermitian adjoint of a matrix $g$ by $g^*$ and the Vandermonde determinant of an $n\times n$ diagonal matrix $a$ by $\Delta_n(a)=\prod_{n\geq c>d\geq 1}(a_c-a_d)$. All these definitions help in constructing the following sets:
\begin{enumerate}
\item		the complex general linear group of dimension $n\times n$: $G_{n}={\rm Gl}_{\mathbb{C}}(n)$ equipped with the flat Lebesgue measure $dg$,
\item		the $n\times n$ invertible Hermitian matrices $H_{n}$ equipped with the flat Lebesgue measure $dx$,
\item		the  $n\times n$ real diagonal and invertible $n\times n$ matrices: $D_n\simeq\mathbb{R}^n$ equipped with the flat Lebesque measure $da$,
\item		the  $n\times n$ positive real diagonal $n\times n$ matrices $A_n\simeq\mathbb{R}_+^n$ equipped with the flat Lebesque measure $da$,
\item		the group of the complex lower  $n\times n$ triangular matrices
 			$$T_n=\biggl\{t\in G\biggl| t=\{t_{ab}\}_{a,b=1\ldots,n}\ {\rm with}\ t_{ij}\in\mathbb{C},\ t_{ij}=0\ {\rm for}\ i<j\biggl\}$$
			equipped with the flat Lebesgue measure $dt$,
\item		the unitary group $K_n={\rm U}(n)$ equipped with the normalized Haar measure $d^*k$,
\item		the complex $l\times m$ matrices of rank $n\leq l,m$
			$$G_{l,m}^{(n)}=\{k\Pi_{l,n}g\Pi_{n,m}\tilde{k}^*|k\in K_l,\ \tilde{k}\in K_m,\ {\rm and}\ g\in G_n\},$$
			where $G_{n,n}^{(n)}=G_n$ and equipped with the measure $dg'=d(k\Pi_{l,n}g\Pi_{n,m}\tilde{k}^*)=dg d^*kd^*\tilde{k}$ induced by those on $G_n$, $K_l$ and $K_m$,
\item		the $l\times l$ Hermitan matrices of rank $n\leq l$
			$$H_l^{(n)}=\{k\Pi_{l,n}x\Pi_{n,l}k^*|k\in K_l\ {\rm and}\ x\in H_n\},$$
			where $H_n^{(n)}=H_n$ and equipped with the measure $dx'=d(k\Pi_{l,n}x\Pi_{n,l}k^*)=dx d^*k$ induced by those on $H_n$ and $K_l$,
\item		and the symmetric group $\mathbb{S}_n$ permuting $n$ elements.
\end{enumerate}
The matrices of lower rank than their matrix dimensions are of particular interest since they allow to deal with products of the form $g_1g_2$ with $g_1\in \mathbb{C}^{l,n}$, $g_2\in \mathbb{C}^{n,m}$ and $l,m>n$ as well as of the form $gxg^*$ with $g\in\mathbb{C}^{l,n}$, $x\in {\rm Herm}(n)$ and $l>n$. Obviously both products are of lower rank and can be considered as embeddings of lower dimensional matrices via multiplicative maps. We would like to underline that indeed all matrices of fixed lower rank can be represented as shown in the definition of the two sets $G_{l,m}^{(n)}$ and $H_l^{(n)}$. For example, for an $l\times l$ dimensional Hermitian matrix $x'$ of rank $n$,  one particular representation is by choosing for $x$ the non-zero eigenvalues of $x'$ on the diagonal and $k$ comprises the corresponding eigenvectors arranged as columns. The construction for $G_{l,m}^{(n)}$ is very similar.

Additionally, we need the set of $K$-invariant Lebesgue integrable functions on $G_{l,m}^{(n)}$ and $H_l^{(n)}$  and the symmetric integrable functions on $D_n$ and $A_n$. These are defined as
\begin{equation}\label{L1.def}
\begin{split}
L^{1,K}(G_{l,m}^{(n)})=&\{ f_G\in L^{1}(G_{l,m}^{(n)})|\ f_G(g)=f_G(k_1gk_2)\ {\rm for\ all}\ g\in G_{l,m}^{(n)},\ k_1\in K_l\ {\rm and}\ k_2\in K_m\},\\
L^{(1,K)}(H_l^{(n)})=&\{ f_H\in L^{1}(H_l^{(n)})|\ f_H(x)=f_H(kxk^*)\ {\rm for\ all}\ x\in H_l^{(n)}\ {\rm and}\ k\in K_l\},\\
L^{1,\mathbb{S}}(D_n)=&\{f_D\in L^{1}(D_n)|\ f_D(a)=f_D(\sigma a\sigma^T)\ {\rm for\ all}\ a\in D_n\ {\rm and}\ \sigma\in \mathbb{S}_n\},\\
L^{1,\mathbb{S}}(A_n)=&\{f_A\in L^{1}(A_n)|\ f_A(a)=f_A(\sigma a\sigma^T)\ {\rm for\ all}\ a\in A_n\ {\rm and}\ \sigma\in \mathbb{S}_n\}.
\end{split}
\end{equation}
The matrix $\sigma^T$ is the transpose of $\sigma$. We coin the subset of probability densities of these function spaces by $L_{\rm Prob}^{1,K}(G_{l,m}^{(n)})$, $L_{\rm Prob}^{1,K}(H_l^{(n)})$ $L_{\rm Prob}^{1,\mathbb{S}}(D_n)$ and $L_{\rm Prob}^{1,\mathbb{S}}(A_n)$ and adapt the notation of~\cite{KK2019}. Additionally we emphasise the space on which a function belongs by a subscript, like $f_G$, $f_H$, $f_D$ or $f_A$.

The spaces $L^{1,K}(G_{l,m}^{(n)})$ and $L^{1,\mathbb{S}}(A_n)$ as well as $L^{1,K}(H_l^{(n)})$ and $L^{1,\mathbb{S}}(D_n)$ are bijectively related via the isometrics
\begin{equation}\label{IGdef}
\mathcal{I}_G:\ L^{1,K}(G_{l,m}^{(n)})\rightarrow L^{1,\mathbb{S}}(A_n),\ f_A(a)=\mathcal{I}_Gf_G(a)=\frac{\pi^{n^2}}{n!}\left(\prod_{j=0}^{n-1}\frac{1}{(j!)^2}\right)\Delta_n^2(a)f_G(\Pi_{l,n}a\Pi_{n,m}).
\end{equation}
and
\begin{equation}\label{IHdef}
\mathcal{I}_H:\ L^{1,K}(H_l^{(n)})\rightarrow L^{1,\mathbb{S}}(D_n),\ f_D(a)=\mathcal{I}_Hf_H(a)=\frac{1}{n!}\left(\prod_{j=0}^{n-1}\frac{\pi^j}{j!}\right)\Delta_n^2(a)f_H(\Pi_{l,n}a\Pi_{n,l}).
\end{equation}
The bijectivity follows from the fact that the normalized Haar measure of the group $K_n$ is unique. Hence, we can go back and forth in the matrix spaces without losing any information. 

\begin{remark}[Functions on the Complex Matrices $\mathbb{C}^{l\times m}$]\label{rem:complex}\

What might look strange is Eq.~\eqref{IGdef} since for the rectangular case $l\neq m$ we would expect an additional determinant. The reason why we do not have any here comes from the reference measure which is for a matrix $g'=k\Pi_{l,n} g\Pi_{n,m}\tilde{k}^* \in G_{l,m}^{(n)}$ with $g\in G_n$ given by the flat Lebesgue measure $dg$ (products of all real independent differentials) and $k\in K_l$ and $\tilde{k}\in K_m$ Haar distributed so that we have $dg'=dg d^*k d^*\tilde{k}$.

What is then the relation to $K$-invariant $L^1$-functions on $\mathbb{C}^{l\times m}$ that are distributed by the flat Lebesgue measure, since it is almost the set $G_{l,m}^{(n)}$ with $n=\min\{l,m\}$? First and foremost, all matrices of lower rank are of measure zero for this choice. Thus, we are compelled to look for another measure. In the case of a fully ranked matrix, say of rank $m$ with $l\geq m$, we choose a function $f_{\mathbb{C}}\in L^{1,K}(\mathbb{C}^{l\times m})$, in particular $f_{\mathbb{C}}(k\tilde{g}\tilde{k})=f_{\mathbb{C}}(\tilde{g})$ for all $\tilde{g}\in\mathbb{C}^{l\times m}$, $k\in K_{l}$ and $\tilde{k}\in K_m$. Then, the computation
\begin{equation}
\begin{split}
 \int_{\mathbb{C}^{l\times m}} d\tilde{g} f_{\mathbb{C}}(\tilde{g})=&\frac{\pi^{l m)}}{m!}\left(\prod_{j=0}^{m-1}\frac{1}{j!(j+l-m)!}\right)\int_{A_m}\Delta_m^2(a) \det a^{l-m} f_{\mathbb{C}}(\Pi_{l,m}a)\\
 =&\left(\prod_{j=0}^{m-1}\frac{\pi^{l-m}j!}{(j+l-m)!}\right)\int_{G_{l,m}^{(m)}}dg\int_{K_l}d^*k\int_{K_m}d^*\tilde{k} [\det (k\Pi_{l,m} g\tilde{k}^*)^*(k\Pi_{l,m} g\tilde{k}^*)]^{l-m}f_{\mathbb{C}}((k\Pi_{l,m} g\tilde{k}^*))
\end{split}
\end{equation}
leads to the identification of the corresponding $K$-invariant function on $G_{l,m}^{(m)}$ as
\begin{equation}\label{identification}
f_G(g')=\left(\prod_{j=0}^{m-1}\frac{\pi^{l-m}j!}{(j+l-m)!}\right)\det (g'{g'}^*)^{l-m}f_{\mathbb{C}}(g').
\end{equation}
Therefore, the well-known determinant is already part of $f_G$ and, hence, also comprised by $f_A$.
\end{remark}

The main focus of the present work is the multiplicative convolution
\begin{equation}\label{conv.H}
P_G\circledast Q_H(x)=\int_{G_{l,m}^{(n_1)}}d\tilde{g}\int_{H_m^{(n_2)}}d\tilde{x}\, \delta(x-\tilde{g}\tilde{x}\tilde{g}^*)\,P_G(\tilde{g})Q_H(\tilde{x})
\end{equation}
for $P_G$ an $L^1$-function on $G_{l,m}^{(n_1)}$ and $Q_H$ an  $L^1$-function on $H_m^{(n_2)}$, both being $K$-invariant. The Dirac delta function on $H_m^{(n_2)}$ is the one with respect to the measure $dx=d(k\Pi_{l,r}\hat{x}\Pi_{r,l} k^*)=d\hat{x} d^*k$ with $\hat{x}\in H_r$, $k\in K_l$ and $r=\min\{n_1,n_2\}$, i.e., $\int_{H_{l}^{(r)}}f(x)\delta(x-y)dx=f(y)$. It
can be explicitly evaluated for $l=m=n_1=n_2=n$,
\begin{equation}\label{conv.H-square}
P_G\circledast Q_H(x)=\int_{G_{n}}\frac{d\tilde{g}}{(\det \tilde{g}\tilde{g}^*)^{n}}P_G(\tilde{g})Q_H(\tilde{g}^{-1}x(\tilde{g}^{-1})^*).
\end{equation}
Its one dimensional counterpart is the well-known Mellin convolution
 \begin{equation}\label{uni-conv} 
 p\circledast q(a)= \int_{0}^\infty\frac{da'}{a'}p(a')q(a/a')
 \end{equation}
 for $p\in L^1(\mathbb{R}_+)$ and $q\in L^1(\mathbb{R})$.
 
The convolution~\eqref{conv.H} is closely related to multiplicative convolutions on the complex rectangular matrices,
\begin{equation}\label{conv.G}
P_G\circledast Q_G(g)=\int_{G_{l,m}^{(n_1)}}d\tilde{g}_1\int_{G_{m,o}^{(n_2)}}d\tilde{g}_2\,\delta(g-\tilde{g}_1\tilde{g}_2)\,P_G(\tilde{g}_1)Q_G(\tilde{g}_2)
\end{equation}
for two $K$-invariant $L^{1}$-functions on $G_{l,m}^{(n_1)}$ and $G_{m,o}^{(n_2)}$, respectively. Again, the Dirac delta function is the one with respect to the measure $dg=d(k\Pi_{l,r}\hat{g}\Pi_{r,o} \tilde{k}^*)=d\hat{g}d^*kd^*\tilde{k}$ with $\hat{g}\in G_r$, $k\in K_l$, $\tilde{k}\in K_o$ and the rank $r=\min\{n_1,n_2\}$.
For the group $G_n$, which is the case $l=m=o=n_1=n_2=n$, this evaluates to the well-known convolution
\begin{equation}\label{conv.G-square}
P_G\circledast Q_G(g)=\int_{G_n}\frac{d\tilde{g}}{(\det \tilde{g}\tilde{g}^*)^n}P_G(\tilde{g})Q_G(g\tilde{g}^{-1}).
\end{equation}
The latter convolution has been studied intensively in the literature~\cite{Helgason2000}. In random matrix theory, Eq.~\eqref{conv.G} has recently excited interests in applications in wireless telecommunications (e.g., see~\cite{TV2004,AKW2013,WZCT2015,KAAC2019}), quantum information~\cite{RSZ2011,Lakshminarayan2013,CHN2017}, and machine learning~\cite{HN2018,LC2018,TWJTN2019}. This culminated in rapid developments~\cite{AI2015} and has been connected to harmonic analysis~\cite{KK2016,KK2019,FKK2017}. Exactly the harmonic analysis approach is the path will pursue here, too.

To this aim, we need the Mellin transform on the full real line and not only on the half line. It can be considered as the direct sum of the Mellin transform of functions on the positive and negative real line, i.e.,
\begin{equation}\label{Mellin}
\begin{split}
\mathcal{M}: L^1(\mathbb{R})\rightarrow \mathcal{M} L^1(\mathbb{R});\qquad f\mapsto\mathcal{M}f(s,L)=\int_\mathbb{R}\frac{dx}{|x|}[\sign(x)]^L |x|^{s}f(x)
\end{split}
\end{equation}
for any $s\in\mathbb{C}$ and $L\in\{0,1\}$ where the integrand is Lebesgue integrable.  The function ${\rm sign}(x)$ is the signum function which yields the sign of a real number $x$ and vanishes if $x=0$.
Indeed, any function $f\in L^1(\mathbb{R})$ can be decomposed into $f(x)=f_+(x)+f_-(-x)$ with $f_{\pm}(\pm x)=f(x)\Theta(\pm x)$ where $\Theta$ is the Heaviside step-function. Then the ordinary Mellin transform is related to the one on the real line as $\mathcal{M}f(s,L)=\mathcal{M}f_+(s)+(-1)^L\mathcal{M}f_-(s)$. Due to the two copies of the complex plane $s\in\mathbb{C}$ denoted by $L\in\mathbb{Z}_2$, the Mellin transform~\eqref{Mellin} is still invertible and has the inverse
\begin{equation}\label{inv.Mellin}
\mathcal{M}^{-1}[\mathcal{M}f](x)=\lim_{\epsilon\to0}\sum_{L=0,1}\frac{[\sign(x)]^L}{4\pi}\int_{-\infty}^\infty ds \mathcal{M}f(\imath s+1,L)|x|^{-\imath s-1}\zeta_1(\epsilon s).
\end{equation}
The regularization
\begin{equation}
\zeta_1(\epsilon s)=\frac{\pi^2\cos(\epsilon s)}{\pi^2-4\epsilon^2 s^2}
\end{equation}
 guarantees that the  inverse holds for any $L^1$-function on the real line, and it can be omitted when the other terms in the integral~\eqref{inv.Mellin} are absolutely integrable. The sum over $L$ properly combines the two pieces $f_+$ and $f_-$ of a function $f$.
 
 The univariate Mellin transform decouples the multiplicative univariate convolution~\eqref{uni-conv}, i.e.,
 \begin{equation}\label{Mellin.fact}
 \mathcal{M}(p\circledast q)(s,L)=\mathcal{M} p(s) \mathcal{M} q(s,L)
 \end{equation}
 for $p\in L^1(\mathbb{R}_+)$ and $q\in L^1(\mathbb{R})$. We will construct the counterpart in matrix space.

In the next section, we generalize the construction~\eqref{Mellin} to the multivariate case of Hermitian matrices.

\section{Spherical Transforms}\label{sec:spherical}

The theory of spherical transforms~\cite{Helgason2000} developed by Harish-Chandra et al. in the 50's has been extremely helpful in dealing with sums~\cite{KR2016,FKK2017} and products~\cite{KK2016,KK2019,KFI2019} of random matrices. However, already when dealing with products of real antisymmetric and real asymmetric matrices, see~\cite{KFI2019}, we have seen that the original definition of the spherical transform does not carry far and needs to be modified. This is also here the case. The first step to achieve this goal is the definition of the spherical functions, which serve as the Fourier factors in ordinary Fourier analysis. To this goal, we define the two diagonal matrices
\begin{equation}\label{def.L.s}
s^{(n)}=\diag(n-1,n-2,\ldots,2,1,0)\in\mathbb{C}^n\qquad{\rm and}\qquad L^{(n)}=\diag({\rm mod}_2(n-1),{\rm mod}_2(n-2),\ldots,0,1,0)\in\mathbb{Z}_2^n.
\end{equation}

\begin{definition}[Spherical Functions on $H_{l}^{(n)}$ and $G_{l,m}^{(n)}$]\label{def:spherical-functions}\

Let $x\in H_{l}^{(n)}$ and $g\in G_{l,m}^{(n)}$ be two fixed matrices with the three positive integers $n\leq l,m$. Moreover, we set $s_{n+1}=-1$ and $L_{n+1}=-1$ and specify the rectangular matrix $\Pi_{j,l}$ as in Eq.~\eqref{proj-incl}. We define
\begin{enumerate}
\item		the spherical function on $H_{l}^{(n)}$ by
			\begin{equation}\label{spher:as}
				\Phi(s,L; x)=\frac{\int_{K_l}d^*k\prod_{j=1}^{n} {\rm sign}[\det \Pi_{j,l}kxk^*\Pi_{l,j}]^{L_j-L_{j+1}-1} |\det \Pi_{j,l}kxk^*\Pi_{l,j}|^{s_j-s_{j+1}-1}}{\int_{K_l}d^*k\prod_{j=1}^{n} |\det \Pi_{j,l}k\Pi_{l,n}\Pi_{n,l}k^*\Pi_{l,j}|^{s_j-s_{j+1}-1}}
			\end{equation}
			for $s=\diag(s_1,\ldots,s_n)\in\mathbb{C}^n$ and $L=\diag(L_1,\ldots,L_n)\in\mathbb{Z}_2^n$ with ${\rm Re}\,(s_j-s_{j+1})\geq 1$ for all $j=1,\ldots,n$ and analytically continue $\Phi$ to ${\rm Re}\,(s_b-s_{b+1})< 1$ for some $b=1,\ldots, n$,
\item		and the spherical function on $G_{l,m}^{(n)}$, cf., Ref.~\cite{Helgason2000,KK2016} for $G_n$,
			\begin{equation}\label{spher:s}
				\Psi(s; g)=\Phi(s,L^{(n)}; gg^*)
			\end{equation}
			for all $s\in\mathbb{C}^n$ and fixed $L^{(n)}$ as in Eq.~\eqref{def.L.s}.
\end{enumerate}
\end{definition}

We would like to underline that we employ here a different convention of $s_{n+1}$ compared to the standard literature~\cite{Helgason} where its value is usually $-(n+1)/2$. We have decided for the choice $s_{n+1}=-1$ so that the notation becomes simpler. One particular consequence is that the choice of $s$ to find the normalization of the spherical functions, which is $\Phi(s^{(n)},L^{(n)}; x)=1$ and $\Psi(s^{(r)}; g)=1$. Moreover, we have the trivial normalizations $\Phi(s,L; \Pi_{l,n}\Pi_{n,l})=1$ and $\Psi(s; \Pi_{l,n}\Pi_{n,m})=1$; the latter is due to $\Pi_{n,m}\Pi_{m,n}=\eins_n$. The normalising denominator in~\eqref{spher:as}, which we denote by
\begin{equation}\label{normalization}
C_{l,n}(s)=\int_{K_l}d^*k\prod_{j=1}^{n}|\det \Pi_{j,l}k\Pi_{l,n}\Pi_{n,l}k^*\Pi_{l,j}|^{s_j-s_{j+1}-1},
\end{equation}
is unity in the case of maximal rank, i.e., $l=n$.

\begin{remark}[Limit to Lower Ranked Matrices]\label{rem:limit}\

Evidently, the spherical functions $\Psi$ and $\Phi$ are $K$-invariant so that we can also study these function on $A_n$ and $D_n$, respectively. The relation between the spherical function on $G_{l,m}^{(n)}$ to the one on $G_{l,m}^{(n-1)}$ is given as follows
\begin{equation}
\begin{split}
&\lim_{a_n\to0} \lim_{s_n\to 0}C_{m}^{(n)}(\diag(s_1,\ldots,s_n))\Psi(\diag(s_1,\ldots,s_n); \diag(a_1,\ldots,a_n))\\
=&C_{m}^{(n-1)}(\diag(s_1,\ldots,s_{n-1}))\Psi(\diag(s_1,\ldots,s_{n-1})-\eins_{n-1}; \diag(a_1,\ldots,a_{n-1})).
\end{split}
\end{equation}
The order of the limits is of paramount importance otherwise it is zero or infinity, depending on the exponent, due to the vanishing determinant $\det a$ in the definition~\eqref{spher:as}. Similarly for the Hermitian matrices we find
\begin{equation}
\begin{split}
&\lim_{a_n\to0} \lim_{s_n, L_n\to 0}C_{m}^{(n)}(\diag(s_1,\ldots,s_n))\Phi(\diag(s_1,\ldots,s_n),\diag(L_1,\ldots,L_n); \diag(a_1,\ldots,a_n))\\
=&C_{m}^{(n-1)}(\diag(s_1,\ldots,s_{n-1}))\Phi(\diag(s_1,\ldots,s_{n-1})-\eins_{n-1},\diag(L_1,\ldots,L_{n-1})-\eins_{n-1}; \diag(a_1,\ldots,a_{n-1})).
\end{split}
\end{equation}
Relations with even lower ranks can be computed recursively. It is always crucial to multiply the normalization to the spherical function to find a smooth transition from higher to lower ranked matrices.
\end{remark}

The spherical function $\Psi$ for $G_n$ has a remarkable explicit analytical representation in terms of the squared singular values $a\in A_n$ of $g\in G_n$ given by the Gelfand-Na\u{\i}mark integral~\cite{GN1957}
\begin{equation}\label{GN-integral}
\Psi(s; g)=\Phi(s,L^{(n)}; a)=\left(\prod_{j=0}^{n-1}j!\right)\frac{\det[a_c^{s_b}]_{b,c=1,\ldots,n}}{\Delta_n(a)\Delta_n(s)}.
\end{equation}
The corresponding formula for general rectangular matrices $g\in G_{l,m}^{(n)}$ and Hermitian matrices $x\in H_{l}^{(n)}$ is very similar.

\begin{theorem}[Spherical Functions $\Phi$ and $\Psi$]\label{thm:spher.func}\

\begin{enumerate}
\item		Let $a\in D_n$ and $s\in\mathbb{C}^n$ with non-degenerate spectra, i.e. $a_l\neq a_k$ and $s_l\neq s_k$ for all $l\neq k$,  and $L\in\mathbb{Z}_2^n$. The spherical function~\eqref{spher:as} has the explicit form
			\begin{equation}\label{spher:as.b}
				\Phi(s,L; a)=\left(\prod_{j=0}^{n-1}j!\right)\frac{\det[[{\rm sign}(a_c)]^{L_b}|a_c|^{s_b}]_{b,c=1,\ldots,n}}{\Delta_n(a)\Delta_n(s)}.
			\end{equation}
\item			Let $a\in A_n$ and $s\in\mathbb{C}^n$ with non-degenerate spectra. The spherical function~\eqref{spher:s} is
			\begin{equation}\label{spher:s.b}
				\Psi(s; a)=\left(\prod_{j=0}^{n-1}j!\right)\frac{\det[a_c^{s_b}]_{b,c=1,\ldots,n}}{\Delta_n(a)\Delta_n(s)}.
			\end{equation}	
\item		The normalization~\eqref{normalization} is equal to
			\begin{equation}\label{normalization.b}
				C_{l,n}(s)=\prod_{j=1}^n\frac{(l-j)!\Gamma[s_j+1]}{(n-j)!\Gamma[s_j+l-n+1]}
			\end{equation}
			for any $s\in\mathbb{C}^{n}$.
\end{enumerate}

\end{theorem}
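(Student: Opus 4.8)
The plan is to prove the three statements in the order $(2)\Leftarrow(1)$, then $(1)$, then $(3)$, exploiting that Part~(2) is a specialisation of Part~(1). By definition $\Psi(s;g)=\Phi(s,L^{(n)};gg^*)$, and for $g\in G_{l,m}^{(n)}$ the matrix $gg^*\in H_l^{(n)}$ carries the positive squared singular values of $g$ as its nonzero eigenvalues; writing these as $a\in A_n$, every $\sign(a_c)$ equals $+1$, so $[\sign(a_c)]^{L_b}|a_c|^{s_b}=a_c^{s_b}$ and the determinant in \eqref{spher:as.b} collapses to the one in \eqref{spher:s.b}. Hence Part~(2) follows from Part~(1) by restriction to $A_n$, and it remains to treat the indefinite Hermitian case and the normalisation.

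For Part~(1) I would first reduce the group integral. By $K_l$-invariance we may take $x=\Pi_{l,n}a\Pi_{n,l}=\diag(a_1,\dots,a_n,0,\dots,0)$, and since the corner determinants $\Delta_j:=\det\Pi_{j,l}kxk^*\Pi_{l,j}$ with $j\le n$ depend only on the first $n$ rows of $k$, the integral descends to the complex Stiefel manifold $V_n(\mathbb{C}^l)$. The decisive algebraic simplification is to pass to the pivots $d_i=\Delta_i/\Delta_{i-1}$ of the $LDL^*$-decomposition of $kxk^*$ (with $\Delta_0:=1$), for which $\Delta_j=\prod_{i\le j}d_i$. Using the conventions $s_{n+1}=L_{n+1}=-1$, the exponents telescope and the integrand factorises as
\[\prod_{j=1}^n[\sign\Delta_j]^{L_j-L_{j+1}-1}|\Delta_j|^{s_j-s_{j+1}-1}=\prod_{i=1}^n[\sign d_i]^{L_i}|d_i|^{s_i}\,d_i^{\,i-n},\]
which cleanly separates the sign data $[\sign d_i]^{L_i}$ from the magnitudes. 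I would then integrate this against the joint law of the corner spectra of $kxk^*$, i.e.\ over the Gelfand--Na\u{\i}mark/Gelfand--Tsetlin corner process whose top row is fixed to $\mathrm{spec}(x)$ and whose intermediate rows interlace. Integrating out the levels $1,\dots,n-1$ by the elementary interlacing integrals produces, exactly as in the Weyl-character mechanism, the antisymmetric combination $\det[[\sign(a_c)]^{L_b}|a_c|^{s_b}]/\Delta_n(a)$, while the $s$-dependence assembles into $\Delta_n(s)$ and the prefactor $\prod_{j=0}^{n-1}j!$. The positive-definite specialisation of this computation is precisely the known Gelfand--Na\u{\i}mark integral \eqref{GN-integral}, which serves as anchor and base case; equivalently, one may run the rank induction supplied by Remark~\ref{rem:limit}, verifying that the claimed determinant satisfies that recursion through a single cofactor expansion.

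For Part~(3) the normalisation is a genuinely separate unitary integral, obtained by the same reduction at the projection $x=\Pi_{l,n}\Pi_{n,l}$. Writing $\tilde K_{[j]}$ for the top-left $j\times n$ block of $k$, one has $\det\Pi_{j,l}k\Pi_{l,n}\Pi_{n,l}k^*\Pi_{l,j}=\det(\tilde K_{[j]}\tilde K_{[j]}^*)$, so that
\[C_{l,n}(s)=\int_{K_l}d^*k\prod_{j=1}^n[\det(\tilde K_{[j]}\tilde K_{[j]}^*)]^{s_j-s_{j+1}-1}.\]
I would evaluate this by Gram--Schmidt: the successive ratios $\det(\tilde K_{[j]}\tilde K_{[j]}^*)/\det(\tilde K_{[j-1]}\tilde K_{[j-1]}^*)$ are independent $\mathrm{Beta}(n-j+1,l-n)$ variables (the squared lengths of the orthogonalised rows projected onto the first $n$ coordinates). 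Their moments give $\langle r_j^{\,s_j+j-n}\rangle=(l-j)!\,\Gamma(s_j+1)/[(n-j)!\,\Gamma(s_j+l-n+1)]$ after summing the telescoped exponents $\sum_{i\ge j}(s_i-s_{i+1}-1)=s_j+j-n$, which reproduces \eqref{normalization.b} term by term and correctly collapses to unity at $l=n$.

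The main obstacle I anticipate is the sign bookkeeping in Part~(1) for indefinite $a$. Unlike the positive case, the corner determinants $\Delta_j$ change sign according to the inertia of the leading blocks, so the factors $[\sign d_i]^{L_i}$ interact nontrivially with the interlacing integration, and one must verify that they reassemble into exactly $[\sign(a_c)]^{L_b}$ inside the determinant rather than into spurious cross terms; this is precisely the point at which the classical Harish-Chandra theory has to be modified, as stressed in the introduction. A secondary, purely computational, difficulty is pinning down the $\Gamma$-factors and factorials in $C_{l,n}(s)$ and justifying the analytic continuation from the convergence domain $\RE(s_j-s_{j+1})\ge 1$ to all $s\in\mathbb{C}^n$.
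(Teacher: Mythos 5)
Your outline for parts (1) and (2) is essentially the paper's own argument in different clothing: the paper also diagonalises $x$, treats the full-rank case first, and integrates the corner data of $kxk^*$ against the interlacing (Gelfand--Tsetlin) law of a corank-one projection, Theorem~\ref{thm:recursion}, iterated as a recursion in the ambient dimension and resolved by induction together with a generalised Andr\'eief identity; your pivot bookkeeping $\Delta_j=\prod_{i\le j}d_i$ with telescoped exponent $s_i+i-n$ is exactly the same rewriting of the integrand. Two remarks on that part. First, the obstacle you flag --- reassembling the signs $[\sign d_i]^{L_i}$ into $[\sign(a_c)]^{L_b}$ without cross terms --- turns out to be harmless: in the paper it reduces to the single elementary integral $\int_0^{\tilde a_k}da'\,[\sign(a')]^{\tilde L_j-\tilde L_l-1}|a'|^{\tilde s_j-\tilde s_l-1}=[\sign(\tilde a_k)]^{\tilde L_j-\tilde L_l}\,|\tilde a_k|^{\tilde s_j-\tilde s_l}/(\tilde s_j-\tilde s_l)$, valid for either sign of $\tilde a_k$ because the interlacing determinant of Theorem~\ref{thm:recursion} already localises each intermediate eigenvalue between $0$ and $\tilde a_k$. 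Second, your appeal to Remark~\ref{rem:limit} as an ``equivalent rank induction'' is a misattribution --- that remark lowers the rank $n$, whereas the proof's recursion lowers the ambient dimension $l$ --- and the analytic continuation you defer is carried out via Carlson's theorem and does need to be stated, since \eqref{spher:as.b} is claimed for all non-degenerate $s\in\mathbb{C}^n$. Where you genuinely depart from the paper is part (3): the paper never evaluates $C_{l,n}(s)$ as a separate group integral, but extracts it by comparing the limit $\tilde a\to\diag(a,0,\dots,0)$ of the proven full-rank formula with the definition \eqref{normalization} at $a=\eins_n$; you instead compute the Stiefel integral directly through the independent $\mathrm{Beta}(n-j+1,l-n)$ ratios of leading minors. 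Your computation checks out --- the moment of $r_j^{\,s_j+j-n}$ is indeed $(l-j)!\,\Gamma(s_j+1)/[(n-j)!\,\Gamma(s_j+l-n+1)]$ --- and it buys a self-contained proof of \eqref{normalization.b} that does not presuppose part (1). What the paper's route buys in exchange is economy: the same limit that produces $C_{l,n}(s)$ simultaneously establishes \eqref{spher:as.b} for $l>n$, a case your corner-process argument would otherwise have to handle separately, since the top row of the pattern is then rank-deficient.
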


This theorem is proven in Appendix~\ref{Proof:spher.func} in a very similar way as the real counterpart with even dimensional antisymmetric matrices in~\cite{KFI2019}. This theorem shows that we can deal with all products of the form $gxg^*$ with $x$ being $m\times m$ Hermitian and $g$ a complex $n\times m$ rectangular with $m\leq n$ in a unified way. The following Proposition~\ref{prop:fact.spher} highlights this. Moreover, the case $m>n$ in the above product  can be considered, too, which is essentially a projection.

In the next step, we need to show a factorization theorem for the spherical transforms to be applicable to the convolution~\eqref{conv.H}.  For the spherical function~\eqref{spher:s} of the group $G_n$, this particular factorization reads~\cite{Helgason2000,KK2016}
\begin{equation}\label{factorization.square}
\int_{K_{n}}d^* k \Psi(s; gkg')=\Psi(s; g)\,\Psi(s;g')
\end{equation}
for any two $g,g'\in G_n$. Something similar is also true for the spherical function~\eqref{spher:as} as well as for the rectangular case which is proven in Appendix~\ref{Proof:fact.spher}.

\begin{proposition}[Factorization of $\Phi$ and $\Psi$]\label{prop:fact.spher}\

\begin{enumerate}
\item		Let $g\in G_{l,m}^{(n_1)}$ and $x\in H_m^{(n_2)}$ with $r=\min\{n_1,n_2\}$ the rank of $gxg^*$. Additionally, we choose $\tilde{s}\in\mathbb{C}^r$ and $\tilde{L}\in\{0,1\}^r$. Then, we find the following factorization
\begin{equation}\label{factorization}
\begin{split}
\int_{K_m}d^*k \Phi(s,L; gkxk^*g^*)=C_{m,r+|n_1-n_2|}(\tilde{s})\left\{\begin{array}{cl} \Psi(s; g)\ \Phi(\tilde{s},\tilde{L}; x), & n_1=r,\\ \Psi(\tilde{s}; g)\Phi(s,L; x), & n_2=r \end{array}\right.
\end{split}
\end{equation}
for any $s\in\mathbb{C}^r$ and $L\in\mathbb{Z}_2^r$. We defined $\tilde{s}=\diag(s+|n_1-n_2|\eins_{r},s^{(|n_1-n_2|)})$ and $\tilde{L}=\diag(L+|n_1-n_2)\eins_{r},L^{(|n_1-n_2|)})$.
\item		Let $g_1\in G_{l,m}^{(n_1)}$ and $g_2\in G_{m,o}^{(n_2)}$ with $r=\min\{n_1,n_2\}$ the rank of $g_1g_2$. Then, the factorization formula reads
\begin{equation}\label{factorization.rect}
\begin{split}
\int_{K_m}d^*k \Psi(s,L; g_1kg_2)=C_{m,r+|n_1-n_2|}(\tilde{s})\left\{\begin{array}{cl} \Psi(s; g_1)\Psi(\tilde{s}; g_2), & n_1=r,\\ \Psi(\tilde{s}; g_1)\Psi(s; g_2), & n_2=r \end{array}\right.
\end{split}
\end{equation}
for all $s\in\mathbb{C}^r$, where $\tilde{s}$  is defined as before.
\end{enumerate}

\end{proposition}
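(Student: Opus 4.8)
The plan is to deduce both factorizations from the known group identity~\eqref{factorization.square} by a three-stage reduction: first establish the full-rank square case for the Hermitian middle factor, then restore rectangularity through the embedding structure of $G_{l,m}^{(n)}$ and $H_m^{(n)}$, and finally close the rank gap $|n_1-n_2|$ with the limiting procedure of Remark~\ref{rem:limit}. Before anything else I would reduce part~(2) to part~(1): since $\Psi(s;h)=\Phi(s,L^{(r)};hh^*)$ by Definition~\ref{def:spherical-functions}, setting $h=g_1kg_2$ gives $hh^*=g_1k(g_2g_2^*)k^*g_1^*$ with $g_2g_2^*\in H_m^{(n_2)}$ positive semidefinite, so that $\int_{K_m}d^*k\,\Psi(s;g_1kg_2)=\int_{K_m}d^*k\,\Phi(s,L^{(r)};g_1k(g_2g_2^*)k^*g_1^*)$ is exactly the left-hand side of~\eqref{factorization} with $x=g_2g_2^*$; because $g_2g_2^*$ is positive the sign factors trivialise and $\Phi(\tilde{s},\tilde{L};g_2g_2^*)=\Psi(\tilde{s};g_2)$, recovering~\eqref{factorization.rect}. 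Thus the whole proposition rests on part~(1).

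For the base case $l=m=n_1=n_2=n$ (so $r=n$ and $C_{n,n}(s)=1$ by~\eqref{normalization.b}) I would argue as follows. Diagonalising $x=u\Lambda u^*$ and absorbing $u$ into the Haar variable $k$, it suffices to treat $x=\Lambda$ real diagonal. When $\Lambda$ is positive I write $\Lambda=\Lambda^{1/2}\Lambda^{1/2}$, so $gk\Lambda k^*g^*=(gk\Lambda^{1/2})(gk\Lambda^{1/2})^*$ and $\Phi(s,L^{(n)};\cdot)=\Psi(s;gk\Lambda^{1/2})$; the group identity~\eqref{factorization.square} then yields $\Psi(s;g)\Psi(s;\Lambda^{1/2})=\Psi(s;g)\Phi(s,L^{(n)};\Lambda)$ directly. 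For indefinite $\Lambda$ the sign factors $[\sign(\cdot)]^{L_b}$ must be carried along, and here I would invoke the very device behind the $L$-doubling of the Mellin transform~\eqref{Mellin}: both sides are built from the analytic kernel $|a|^{s}[\sign a]^{L}$, and I would extend the positive-cone identity to all real spectra by analytic continuation of the explicit determinantal representations of Theorem~\ref{thm:spher.func}, the $L$-variable recording the branch picked up as individual eigenvalues cross zero.

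With the base case in hand I would restore the general matrix dimensions. Writing $g=k_1\Pi_{l,n_1}g_0\Pi_{n_1,m}\tilde{k}_1^*$ and $x=k_2\Pi_{m,n_2}x_0\Pi_{n_2,m}k_2^*$ and using $K$-invariance, the $\Pi$-factors reduce the $K_m$-integral to a square one on the relevant block; the surplus rows and columns enter only through the denominator of~\eqref{spher:as}, producing the normalisation via~\eqref{normalization.b}. The remaining and genuinely delicate point is the rank gap: since the rank of $gxg^*$ equals $r=\min\{n_1,n_2\}$, strictly below the rank of the larger factor, I would degrade that larger factor from rank $r+|n_1-n_2|$ down to rank $r$ by iterating the limit of Remark~\ref{rem:limit} exactly $|n_1-n_2|$ times, sending the extra singular values (or eigenvalues) and the matching spectral components to zero against the normalisation. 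Each step lowers the surviving parameters by $\eins$ and fixes the next removed component to the next rung of the ladder $s^{(|n_1-n_2|)}$, which is precisely what assembles $\tilde{s}=\diag(s+|n_1-n_2|\eins_r,s^{(|n_1-n_2|)})$, the analogous $\tilde{L}$, and the surviving factor $C_{m,r+|n_1-n_2|}(\tilde{s})$.

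The step I expect to fight hardest with is the indefinite base case: unlike the group identity~\eqref{factorization.square}, the eigenvalues of $gk\Lambda k^*g^*$ couple $g$ and $\Lambda$, and a Cauchy--Binet expansion of the leading principal minors $\det(\Pi_{j,l}\,gk\Lambda k^*g^*\,\Pi_{l,j})$ turns them into signed subset sums whose overall sign is not fixed, so the factorisation of the sign factors is not manifest term by term. Making the analytic-continuation argument watertight---ensuring the two copies $L\in\{0,1\}$ glue the positive- and negative-eigenvalue sectors consistently on both sides---is the crux; once it is secured, the rectangular embedding and the rank-gap limit are careful but routine bookkeeping driven entirely by Remark~\ref{rem:limit} and the normalisation~\eqref{normalization.b}.
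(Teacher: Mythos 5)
Your reduction of part (2) to part (1) via $\Psi(s;g_1kg_2)=\Phi(s,L^{(r)};g_1k(g_2g_2^*)k^*g_1^*)$ matches the paper, and your positive-definite base case via $\Lambda=\Lambda^{1/2}\Lambda^{1/2}$ and Eq.~\eqref{factorization.square} is sound. But the step you yourself identify as the crux --- extending to indefinite $x$ by ``analytic continuation'' in the eigenvalues of $\Lambda$, with $L$ recording the branch --- does not work, and no repair along those lines is available. The left-hand side $\int_{K_m}d^*k\,\Phi(s,L;gk\Lambda k^*g^*)$ is built from $|\det(\cdot)|^{s_j-s_{j+1}-1}$ and $[\sign\det(\cdot)]^{L_j-L_{j+1}-1}$ of principal minors, which are not analytic functions of the $\Lambda_j$; for generic complex $s$ the function $|a|^{s}[\sign a]^{L}$ on $a<0$ is \emph{not} the analytic continuation of $a^{s}$ from $a>0$ (that would be $e^{\pm i\pi s}|a|^{s}$, with infinitely many branches), so the values of both sides on the positive cone simply do not determine their values for indefinite $\Lambda$. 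Carlson's theorem, which the paper does invoke, is applied only in the $s$-variables, where genuine analyticity and boundedness hold; there is no analogous uniqueness principle in the matrix argument, and continuity/density also fails because one cannot reach the indefinite locus from the positive cone without crossing singular matrices where the integrand degenerates.

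The paper's proof sidesteps the issue entirely and never needs positivity of $x$: writing $k'g=t\Pi_{l,m}\tilde k$ (QR decomposition, $t$ lower triangular) and using $\Pi_{j,l}t=t_j\Pi_{j,l}$, the $j$-th leading principal minor factorizes \emph{pointwise in $k$ and $k'$} as $\det(t_jt_j^*)\cdot\det(\Pi_{j,m}kxk^*\Pi_{m,j})$ with $\det(t_jt_j^*)=\det(\Pi_{j,l}k'gg^*k'^*\Pi_{l,j})>0$. Thus the sign factor attaches only to the $x$-dependent minor and the modulus splits multiplicatively, which is exactly the resolution of the Cauchy--Binet sign-mixing you were worried about; the two Haar integrals then decouple into the spherical functions of $g$ and $x$, and Carlson's theorem in $s$ finishes the analytic continuation. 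A secondary discrepancy: you propose to close the rank gap $|n_1-n_2|$ by iterating the limits of Remark~\ref{rem:limit}, whereas the paper obtains the shifts $\tilde s$, $\tilde L$ and the constant $C_{m,r+|n_1-n_2|}(\tilde s)$ directly from the convention $s_{r+1}=\max\{n_1,n_2\}-r-1$ in Definition~\ref{def:spherical-functions} together with the identity $C_{l,n_1}(\tilde s)C_{m,n_2}(s)/C_{l,n_2}(s)=C_{m,n_1}(\tilde s)$; your limiting route might be made rigorous but is substantially more delicate (the order of limits in Remark~\ref{rem:limit} is already flagged as critical) and is not needed.
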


The second part of Proposition~\ref{prop:fact.spher} is equivalent to the discussion in~\cite{IK2014} for the product of complex rectangular random matrices stating that the singular value statistics is the same as product of a product of square random matrices at the expense of a random projection and the multiplication with a power of a determinant. The projection is represented here by the constants on the right hand side of Eq.~\eqref{factorization.rect}, which are essentially spherical transforms of random projections, see below.  The additional determinant in the weight is reflected by the shift of $s$ in $\Psi$. The latter can be seen by the identity
\begin{equation}\label{shiftPsi}
\Psi(s+\mu\eins_n;g)=(\det gg^*)^\mu\Psi(s;g),
\end{equation}
for $g\in G_n=G_{n,n}^{(n)}$. A similar identity exists for the spherical transform on the Hermitian matrices $H_n=H_n^{(n)}$,
\begin{equation}\label{shiftPhi}
\Phi_{n}(s+\mu\eins_n,L+j\eins_n;x)=(\sign(\det x))^j |\det x|^\mu \Phi(s,L;x)
\end{equation}
for any $\mu\in\mathbb{C}$ and $j\in\mathbb{Z}$.

\begin{remark}[Relation between Spherical Functions of $g$ and $g^*$]\label{rem:rel.adj}\

Another thing, which is worth mentioning, is that $\Psi(s; g)=\Phi(s,L^{(n)}; gg^*)$ only depends on the singular values $a\in A_n$ of the matrix $g$ which are shared by its Hermitian adjoint $g^*$. Thence, one may ask what the relation between both of their spherical functions is. The answer follows from Proposition~\ref{prop:fact.spher} above in combination with Definition~\ref{def:spherical-functions}. Without restriction of generality we assume $l\geq m$, then we calculate
\begin{equation}\label{spher-rel}
\begin{split}
\Psi(s; g)=\Phi(s,L^{(n)}; gg^*)=&\Phi(s,L^{(n)}; \Pi_{l,n}a\Pi_{n,l})\\
=&\int_{K_m}d^*k\Phi(s,L^{(n)}; \Pi_{l,m}k\Pi_{m,n}a\Pi_{n,m}k^*\Pi_{m,l})\\
=&C_{m,m}(\tilde{s})\Psi(\tilde{s}; \Pi_{l,m}) \Phi(s,L^{(n)}; \Pi_{m,n}a\Pi_{n,m})=\Phi(s,L^{(n)}; g^*g)=\Psi(s; g^*)
\end{split}
\end{equation}
with $\tilde{s}=\diag(s+(m-n)\eins_{n},s^{(m-n)})$.
In the first and second line, we have exploited the invariance of the Haar measure under multiplying unitary matrices, especially we have introduced the unitary matrix $\diag(k,\eins_{l-m})\in K_l$ which commutes with the projector $\Pi_{l,m}$ like $\diag(k,1_{l-m})\Pi_{l,m}=\Pi_{l,m}k$ and can be Haar distributed, too. In the third line we employed $C_{m,m}(\tilde{s})=1$. The spherical function $\Psi$ drops out because it is normalized for the projection $\Pi_{l,m}$ which has maximal rank.

This result shows that the spherical transform is the same for $g$ and $g^*$. This is not immediately clear from the definition because we integrate over different groups. Indeed without the normalization in the denominator~\eqref{spher:as}, we would have found a difference between both spherical transforms.
\end{remark}

The full benefit of Propositions~\ref{prop:fact.spher} unfurls when combining the factorization with the spherical transform, which is the Fourier transform on curved symmetric spaces. They are defined as follows.

\begin{definition}[Spherical Transforms corresponding to $\Phi_n$ and $\Psi_n$]\label{def:spher.trans}\
 
 Let $Q_G\in L^{1,K}(G_{l,m}^{(n)})$,  $P_H\in L^{1,K}(H_m^{(n)})$, $q_A=\mathcal{I}_GQ_G$, and $p_D=\mathcal{I}_HP_H$, see Eqs.~\eqref{IGdef} and~\eqref{IHdef}.
\begin{enumerate}
\item		The spherical transform $\mathcal{S}_{\Phi}: L^{1,K}(H_m^{(n)})\rightarrow \mathcal{S}_{\Phi}(L^{1,K}(H_m^{(n)}))$ corresponding to $\Phi$ is defined as
			\begin{equation}\label{Strafo:as}
				\begin{split}
				\mathcal{S}_{\Phi}P_H(s,L)=&\int_{H_m^{(n)}} dx P_H(x)\Phi(s,L; x)\\
				=&\left(\prod_{j=0}^{n-1}j!\right)\int_{D_n}da\, p_D(a)\frac{\det[[{\rm sign}(a_c)]^{L_b}|a_c|^{s_b}]_{b,c=1,\ldots,n}}{\Delta_n(a)\Delta_n(s)}=\mathcal{S}_{\Phi}p_D(s,L)
				\end{split}
			\end{equation}
			for those $s\in\mathbb{C}^n$ for which the integral exists and for any $L\in\mathbb{Z}_2^n$.
\item		The spherical transform $\mathcal{S}_{\Psi}: L^{1,K}(G_{l,m}^{(n)})\rightarrow \mathcal{S}_{\Psi}(L^{1,K}(G_{l,,m}^{(n)}))$ corresponding to $\Psi$ is (see~\cite{Helgason2000,KK2016} for $G_n$)
			\begin{equation}\label{Strafo:s}
			\begin{split}
			\mathcal{S}_{\Psi}Q_G(s)=&\int_{G_{l,m}^{(n)}}dg Q_G(g)\Psi_n(s; g)=\left(\prod_{j=0}^{n-1}j!\right)\int_{A_n}da\, q_A(a)\frac{\det[a_c^{s_b}]_{b,c=1,\ldots,n}}{\Delta_n(a)\Delta_n(s)}=\mathcal{S}_{\Psi}q_A(s)
			\end{split}
			\end{equation}
			for any $s\in\mathbb{C}^n$ where the integral exists.
\end{enumerate} 
In the second equalities, we slightly abuse notation and have to assume that $s_l\neq s_k$ for $l\neq k$.
\end{definition}

The spherical transforms are normalized such that
\begin{equation}
\mathcal{S}_{\Phi}P_H(s^{(n)},L^{(n)})=\int_{H_m^{(n)}} dx\,P_H(x)\qquad{\rm and}\qquad
\mathcal{S}_{\Psi}Q_G(s^{(n)})=\int_{G_{l,m}^{(n)}} dg\,Q_G(g).
\end{equation}
These relations are extremely helpful when fixing the constants for explicit ensembles.

\begin{example}[Spherical Transform of a Sub-Block of a Unitary Matrix]\label{ex:uni}\

As a simple example we would like to compute the spherical transform of a $l\times m$ sub-block matrix $g=\Pi_{l,M}k\Pi_{M,m}$ of a Haar distributed unitary matrix $k\in K_M$ with $l,m\leq M$. The probability density is then given by $P_G(g)=\int_{K_M}d^*k\delta(g-\Pi_{l,M}k\Pi_{M,m})$ with the Dirac delta function in the space $G_{l,m}^{(n)}$ with $n=\min\{l,m\}$. Without restriction of generality due to Eq.~\eqref{spher-rel}, we assume that $l\leq m$. Then we have
\begin{equation}\label{spher.uni}
\begin{split}
\int_{K_M} d^*k \Psi(s;\Pi_{l,M}k\Pi_{M,m})=C_{M,m}(\diag(s+(m-l)\eins_l,s^{(m-l)}))=\prod_{j=1}^l\frac{(M-j)!\Gamma[s_j+m-l+1]}{(m-j)!\Gamma[s_j+M-l+1]}
\end{split}
\end{equation}
because of Eq.~\eqref{factorization.rect} and the normalization of the spherical functions for orthogonal projections. Let us underline that we have not needed any restriction of $l$ and $m$ with respect to $M$ as it is has been the case in~\cite{KKS2016}. This is one of the strengths of the harmonic analysis approach presented here.
\end{example}

A direct consequence of Definition~\ref{def:spher.trans} in combination with Proposition~\ref{prop:fact.spher} is the factorization of the spherical transform when employing it on the convolutions~\eqref{conv.H} and~\eqref{conv.G}. For the convolution~\eqref{conv.G-square} on the square complex matrices $G_{n}$  this is well-known~\cite{Helgason2000,KK2016,KK2019} and explicitly reads
\begin{equation}
\mathcal{S}_{\Psi}[P_G\circledast Q_G](s)=\mathcal{S}_{\Psi}P_G(s)\mathcal{S}_{\Psi}Q_G(s)
\end{equation}
for two functions $P_G,Q_G\in L^{1,K}(G_{n})$.
The following corollary of Definition~\ref{def:spher.trans} and Proposition~\ref{prop:fact.spher} extends this result.

\begin{corollary}[Factorization Formulas of $S_\Phi$ and $S_\Psi$]\label{cor:fact:trans}\

\begin{enumerate}
\item		Let $P_G\in L^{1,K}(G_{l,m}^{(n_1)})$ and $Q_H\in L^{1,K}(H_m^{(n_2)})$ with $r=\min\{n_1,n_2\}$. Then the spherical transform of the convolution $P_G\circledast Q_H$ is
\begin{equation}\label{factorization.conv}
\begin{split}
\mathcal{S}_\Phi[P_G\circledast Q_H](s,L)=C_{m,r+|n_1-n_2|}(\tilde{s})\left\{\begin{array}{cl} \mathcal{S}_\Psi P_G(s)\mathcal{S}_\Phi Q_H(\tilde{s},\tilde{L}), & n_1=r,\\  \mathcal{S}_\Psi P_G(\tilde{s})\mathcal{S}_\Phi P_H(s,L), & n_2=r,\end{array}\right.
\end{split}
\end{equation}
			with $\tilde{s}=\diag(s+|n_1-n_2|\eins_{r},s^{(|n_1-n_2|)})$ and $\tilde{L}=\diag(L+|n_1-n_2|\eins_{r},L^{(|n_1-n_2|)})$.
\item		For $P_G\in L^{1,K}(G_{l,m}^{(n_1)})$ and $Q_G\in L^{1,K}(G_{m,o}^{(n_2)})$ with $r=\min\{n_1,n_2\}$, the spherical transform of the convolution $P_G\circledast Q_G$ becomes
\begin{equation}\label{factorization.rect.conv}
\begin{split}
\mathcal{S}_\Psi[P_G\circledast Q_G](s)=C_{m,r+|n_1-n_2|}(\tilde{s})\left\{\begin{array}{cl} \mathcal{S}_\Psi P_G(s)\mathcal{S}_\Psi Q_G(\tilde{s}), & n_1=r,\\ \mathcal{S}_\Psi P_G(\tilde{s})\mathcal{S}_\Psi Q_G(s), & n_2=r, \end{array}\right.
\end{split}
\end{equation}
			where $\tilde{s}=\diag(s+|n_1-n_2|\eins_{r},s^{(|n_1-n_2|)})$.
\end{enumerate}
\end{corollary}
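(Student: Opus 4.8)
The plan is to obtain both identities as a direct consequence of the definition of the spherical transforms in Definition~\ref{def:spher.trans} together with the factorization of the spherical functions in Proposition~\ref{prop:fact.spher}. The only manipulation that goes beyond plugging one into the other is a symmetrization step: one inserts a Haar average over $K_m$ between the two factors of the convolution, which is permitted precisely because the ensembles are $K$-invariant.

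I would start with part (1). Substituting the convolution~\eqref{conv.H} into the definition~\eqref{Strafo:as} of $\mathcal{S}_\Phi$ and carrying out the $x$-integration against the Dirac delta yields
\begin{equation}
\mathcal{S}_\Phi[P_G\circledast Q_H](s,L)=\int_{G_{l,m}^{(n_1)}}d\tilde{g}\int_{H_m^{(n_2)}}d\tilde{x}\,P_G(\tilde{g})\,Q_H(\tilde{x})\,\Phi(s,L;\tilde{g}\tilde{x}\tilde{g}^*).
\end{equation}
Because $Q_H$ is invariant under the conjugation $\tilde{x}\mapsto k\tilde{x}k^*$ with $k\in K_m$, I may write $Q_H(\tilde{x})=\int_{K_m}d^*k\,Q_H(k\tilde{x}k^*)$ and then change variables $\tilde{x}\mapsto k^*\tilde{x}k$ using the conjugation invariance of the measure on $H_m^{(n_2)}$; after relabeling $k\mapsto k^*$ this moves the $K_m$-average into the argument of $\Phi$, so that the $\tilde{x}$-dependent part of the integrand becomes $\int_{K_m}d^*k\,\Phi(s,L;\tilde{g}k\tilde{x}k^*\tilde{g}^*)$.

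At this point Proposition~\ref{prop:fact.spher}(1), i.e.\ the factorization~\eqref{factorization}, applies verbatim to the inner Haar integral and replaces it by $C_{m,r+|n_1-n_2|}(\tilde{s})$ times either $\Psi(s;\tilde{g})\Phi(\tilde{s},\tilde{L};\tilde{x})$ or $\Psi(\tilde{s};\tilde{g})\Phi(s,L;\tilde{x})$, according to whether $n_1=r$ or $n_2=r$. The constant pulls out of both integrations, and the remaining double integral factorizes into a product of a $\tilde{g}$-integral and a $\tilde{x}$-integral which, by Definition~\ref{def:spher.trans}, are exactly $\mathcal{S}_\Psi P_G$ and $\mathcal{S}_\Phi Q_H$ evaluated at the arguments stated in~\eqref{factorization.conv}. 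Part (2) follows the identical route with~\eqref{conv.G} in place of~\eqref{conv.H} and~\eqref{Strafo:s} in place of~\eqref{Strafo:as}: the left $K_m$-invariance of $Q_G$ (or equivalently the right $K_m$-invariance of $P_G$) lets me insert $\int_{K_m}d^*k$ to form $\Psi(s;\tilde{g}_1k\tilde{g}_2)$, and Proposition~\ref{prop:fact.spher}(2), i.e.\ the factorization~\eqref{factorization.rect}, finishes the computation.

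Since Proposition~\ref{prop:fact.spher} does all the representation-theoretic work, I do not expect a serious structural obstacle; the points that need care are analytic. First, the Haar-insertion and the subsequent interchange of the $K_m$-, $\tilde{g}$- and $\tilde{x}$-integrations must be justified by Fubini, which requires absolute integrability; this is where the $L^1$-assumptions on $P_G$, $Q_H$, $Q_G$ enter, and one should check that the spherical functions stay controlled on the relevant range so that the symmetrized integrand is dominated. Second, the identities are initially valid only on the strip of $s$ where $\mathcal{S}_\Phi$ and $\mathcal{S}_\Psi$ converge (cf.\ the condition $\mathrm{Re}\,(s_j-s_{j+1})\geq1$ in Definition~\ref{def:spherical-functions}); the factorized form then extends to general $s$ through the analytic continuation already built into the spherical functions. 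I would state these two provisos explicitly and otherwise present the chain of equalities above as the proof.
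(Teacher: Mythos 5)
Your argument is correct and coincides with the paper's own proof: both evaluate the Dirac delta, insert a Haar average over $K_m$ into the argument of $\Phi$ by exploiting the $K$-invariance of one of the two densities (the paper shifts $g\to gk$ using $P_G$, you conjugate $\tilde{x}$ using $Q_H$ — the resulting inner integral $\int_{K_m}d^*k\,\Phi(s,L;\tilde{g}k\tilde{x}k^*\tilde{g}^*)$ is identical), and then apply Proposition~\ref{prop:fact.spher}. The only cosmetic difference is that the paper deduces part (2) from part (1) via the relation $\Psi(s;g)=\Phi(s,L^{(n)};gg^*)$ rather than rerunning the computation, and it does not spell out the Fubini and analytic-continuation provisos you rightly flag.
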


\begin{proof}
The second part of this  corollary about the spherical transform on the complex matrices immediately results from the first due to the relation of the two spherical functions $\Phi$ and $\Psi$. Therefore, we only concentrate on the proof of the first part on the Hermitian matrices.

We consider the spherical transform
\begin{equation}\label{proof:fact:trans.1}
\begin{split}
\mathcal{S}_{\Phi}[P_G\circledast Q_H](s,L)=\int_{G_{l,m}^{(n_1)}}d g\int_{H_m^{(n_2)}} dx\Phi(s,L;gxg^*)P_G(g)Q_H(x),
\end{split}
\end{equation}
where we have evaluated the Dirac delta function in Eq.~\eqref{conv.H}. Now we use the fact that $P_G$ is $K$-invariant and introduce a Haar distributed unitary matrix $k\in K_m$ in the following way $g\to gk$. For the integration over $k$ we employ Proposition~\ref{prop:fact.spher} and find the claim~\eqref{factorization.conv}.
\end{proof}

As a last ingredient for solving the convolution~\eqref{conv.H} we need the invertibility of  the spehrical transform $S_\Phi$. When we have the inverse, the convolution can be rewritten as follows
\begin{equation}
P_G\circledast Q_H=\mathcal{S}_\Phi^{-1}[\mathcal{S}_{\Psi}P_G\,\mathcal{S}_{\Phi}Q_H].
\end{equation}
As we know from univariate probability theory, this representation is advantageous because the probabilistic averages can be readily carried out. We have proved the invertibility of the spherical transform in Appendix~\ref{Proof:inv} and the proper statement reads:

\begin{proposition}[Inverse of $S_\Phi$ and $S_\Psi$]\label{prop:inv}\

We define define the auxiliary function
\begin{equation}\label{xi.def}
\zeta_n(z)=\frac{\cos(z)}{\prod_{k=1}^n[1-4z^2/(\pi(2k-1))^2]}.
\end{equation}
\begin{enumerate}
\item		Let $P_H\in L^{1,K}(H_m^{(n)})$ and $p_D=\mathcal{I}_HP_H\in L^{1,\mathbb{S}}(D_n)$.
			The spherical transform $\mathcal{S}_{\Phi}$ is injective and, hence, invertible when restricted to its image. The inverse has the explicit form
			\begin{equation}\label{Strafo:as:inv}
			\begin{split}
			p_D(a)=\mathcal{S}_{\Phi}^{-1}[\mathcal{S}_{\Phi}p_D](a)=&\sum_{L\in\{0,1\}^n}\frac{\Delta_n(a)}{(n!)^2 \prod_{j=0}^{n-1}j!}\lim_{\epsilon\to0}\int_{\mathbb{R}^n} \frac{ds}{(4\pi)^{n}}\mathcal{S}_{\Phi}p_D(\imath s+s^{(n)},L)\prod_{l=1}^n\zeta_n(\epsilon s_l)\\
			&\times\Delta_n(\imath s+s^{(n)})\det[[{\rm sign}(a_c)]^{L_b}|a_c|^{-\imath s_b-n+b-1}]_{b,c=1,\ldots,n}
			\end{split}
			\end{equation}
			for almost all $a\in D_n$.
\item		Choosing $P_G\in L^{1,K}(G_{l,m}^{(n)})$ and $p_A=\mathcal{I}_GP_H\in L^{1,\mathbb{S}}(A_n)$, the spherical transform $\mathcal{S}_{\Psi}$ is invertible when restricted to its image with the inverse
			\begin{equation}\label{Strafo:s:inv}
			\begin{split}
			p_A(a)=\mathcal{S}_{\Psi}^{-1}[\mathcal{S}_{\Psi}p_A](a)=&\frac{\Delta_n(a)}{(n!)^2 \prod_{j=0}^{n-1}j!}\lim_{\epsilon\to0}\int_{\mathbb{R}^n} \frac{ds}{(2\pi)^{n}}\mathcal{S}_{\Psi}p_A(\imath s+s^{(n)})\prod_{l=1}^n\zeta_n(\epsilon s_l)\\
			&\times\Delta_n(\imath s+s^{(n)})\det[a_c^{-\imath s_b-n+b-1}]_{b,c=1,\ldots,n}
			\end{split}
			\end{equation}
			for almost all $a\in A_n$. For $G_n$ this statement is equal to the one in~\cite{KK2016}.
\end{enumerate}
\end{proposition}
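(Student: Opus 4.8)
The plan is to prove part~(1) for $\mathcal{S}_{\Phi}$ directly and then read off part~(2) for $\mathcal{S}_{\Psi}$ as a specialization. Indeed, by~\eqref{spher:s} the function $\Psi$ is the restriction of $\Phi$ to $L=L^{(n)}$ evaluated on matrices with positive spectrum $a\in A_n$, so on $A_n$ every $\sign(a_c)$ equals $+1$, the sum over $L\in\{0,1\}^n$ becomes trivial with each of the $n$ variables contributing $\sum_{L_b=0,1}1=2$, and $(4\pi)^{-n}$ is thereby converted into $(2\pi)^{-n}$. Hence it suffices to establish~\eqref{Strafo:as:inv}.

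First I would insert the definition~\eqref{Strafo:as} of $\mathcal{S}_{\Phi}p_D$, with its integration variable renamed $a'$, into the right-hand side of the candidate formula~\eqref{Strafo:as:inv}. Since the transform is evaluated at $\imath s+s^{(n)}$, the explicit factor $\Delta_n(\imath s+s^{(n)})$ in~\eqref{Strafo:as:inv} cancels the hidden $1/\Delta_n(s)\big|_{s\to\imath s+s^{(n)}}$ coming from~\eqref{Strafo:as}, and the prefactor $\prod_{j=0}^{n-1}j!$ cancels its reciprocal. What survives is a double integral over $a'\in D_n$ and $s\in\mathbb{R}^n$, together with the sum over $L$, of the product of two determinants, $\det[[\sign a'_c]^{L_b}|a'_c|^{\imath s_b+n-b}]_{b,c}$ from the forward transform and $\det[[\sign a_c]^{L_b}|a_c|^{-\imath s_b-n+b-1}]_{b,c}$ from the inverse, weighted by $p_D(a')/\Delta_n(a')$, the prefactor $\Delta_n(a)/(n!)^2$, and the regulator $\prod_l\zeta_n(\epsilon s_l)$.

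Next I would expand both determinants by the Leibniz formula, producing a double sum over permutations $\sigma,\tau\in\mathbb{S}_n$. The decisive point is that the $(s,L)$-dependence then factorizes across the index $b$, the $b$-th factor being
\[
\sum_{L_b=0,1}\frac{1}{4\pi}\int_{\mathbb{R}}ds_b\,[\sign a'_{\sigma(b)}]^{L_b}[\sign a_{\tau(b)}]^{L_b}\,|a'_{\sigma(b)}|^{\imath s_b+n-b}\,|a_{\tau(b)}|^{-\imath s_b-n+b-1}\,\zeta_n(\epsilon s_b),
\]
which is precisely the one-dimensional inverse Mellin kernel~\eqref{inv.Mellin} at the shifted exponent $\imath s_b+(n-b)+1$. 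Letting $\epsilon\to0$, each such factor collapses to $\delta(a'_{\sigma(b)}-a_{\tau(b)})$, the surplus integer powers $|a'_{\sigma(b)}|^{n-b}|a_{\tau(b)}|^{-(n-b)}$ reducing to unity on the support of the emerging delta. Integrating over $a'$ removes all deltas, setting $a'_j=a_{\tau\sigma^{-1}(j)}$; using that $p_D$ is symmetric and $\Delta_n$ antisymmetric one checks that the signs $\sgn\sigma\,\sgn\tau$ combine with the sign picked up from the antisymmetry of $p_D/\Delta_n$ so that all $(n!)^2$ permutation pairs contribute equally, cancelling the $1/(n!)^2$, and the remaining $\Delta_n(a)\cdot p_D(a)/\Delta_n(a)$ returns $p_D(a)$, as claimed.

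The main obstacle is the regularization. The shift $s^{(n)}=\diag(n-1,\ldots,0)$ means the $b$-th univariate inversion is carried out along the vertical contour $\RE=n-b+1$ rather than the contour $\RE=1$ for which~\eqref{inv.Mellin} is stated; as $b$ ranges over $1,\ldots,n$ these are $n$ distinct integer-shifted contours that must all be regularized at once by the single factor $\prod_l\zeta_n(\epsilon s_l)$. This is exactly why $\zeta_n$ in~\eqref{xi.def} carries $n$ regularizing factors, in contrast to the univariate $\zeta_1$ with its single factor: its zeros at $z=\pi(2k-1)/2$, $k=1,\ldots,n$, are matched by the zeros of $\cos z$, so $\zeta_n$ stays bounded, tends to $1$ pointwise, and controls all $n$ shifted inversions simultaneously (the existence of the shifted transforms being guaranteed by the $\Delta_n^2$-weighted $L^1$-structure of $p_D$). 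Making this rigorous requires justifying the interchange of the $a'$-integration with the $s$-integration and the $\epsilon\to0$ limit under the bare $L^1$ hypothesis; I would keep $\epsilon>0$ fixed to secure absolute convergence, apply Fubini and the univariate inversion termwise, and only then send $\epsilon\to0$ by dominated convergence. The resulting reconstruction of $p_D$ for almost all $a$ simultaneously establishes the injectivity of $\mathcal{S}_{\Phi}$, and hence its invertibility on the image.
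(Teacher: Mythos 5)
Your computational skeleton is the same as the paper's (reduce the $n$-fold transform to univariate Mellin inversions, then resum the permutations), and your reduction of part (2) to part (1) via the trivial $L$-sum is exactly the paper's reasoning. But the two steps you defer to ``Fubini and dominated convergence'' are precisely where the substance of this proposition lies, and as proposed they do not go through. First, you expand both determinants by Leibniz \emph{before} doing any integral and then want to treat each of the $(n!)^2$ terms separately. The paper's justification for interchanging the $a'$- and $s$-integrations is that the \emph{intact} ratio $\det[[\sign(a'_c)]^{L_b}|a'_c|^{\imath s_b+n-b}]/\Delta_n(a')$ is bounded on $D_n$ (homogeneous of degree zero, with the poles of $1/\Delta_n$ compensated by the zeros of the determinant); an individual Leibniz monomial divided by $\Delta_n(a')$ is \emph{not} bounded near the degenerate locus, and since $p_D$ is merely $L^1$ the termwise $a'$-integrals need not converge absolutely. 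The paper avoids this by using Andr\'eief's identity to carry out the $s$-integral and $L$-sum while keeping a single determinant, and only expands that determinant after the localization described next.

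Second, the collapse to $\delta(a'_{\sigma(b)}-a_{\tau(b)})$ against a bare $L^1$ density cannot be obtained by dominated convergence; there is no dominating function and no pointwise limit to speak of. What makes the paper's argument work is the explicit computation $\mathcal{F}\zeta_n(u)=c\,\Theta(1-u^2)\cos^{2n-1}(\pi u/2)$: the Fourier transform of the regulator is a \emph{compactly supported, non-negative} approximate identity. Its compact support shrinks the $a'$-integration to an $\epsilon$-neighbourhood of the permuted copies of $a$, where (by the non-degeneracy of $a$) $1/\Delta_n(a')$ is uniformly bounded, and the final limit is then taken by the Lebesgue differentiation theorem (the condition $\lim_{\epsilon\to0}\int_{[-1,1]^n}|p_D(a+\epsilon\tilde a)-p_D(a)|\,d\tilde a=0$), which is exactly why the conclusion holds only for almost all $a$. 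Your remark that the zeros of the denominator of $\zeta_n$ are matched by those of $\cos z$ explains why $\zeta_n$ is bounded and tends to $1$, but not why it acts as an identity on $L^1$; without the explicit form of $\mathcal{F}\zeta_n$ and the localization it provides, the argument has a genuine gap at both the Fubini step and the $\epsilon\to0$ step.
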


As in the univariate case, the regularization $\zeta_n(z)$ is only needed when the the spherical transform is not $L^1$-integrable

Now, we have developed the theoretical framework to deal with products of random matrices on the Hermitian matrices. But before we come to that we want to underline that the above discussion has been true also for the whole set of $L^1$-functions which are $K$-invariant and not only for probability densities. In the ensuing discussion, when studying random matrices we restrict ourselves to the latter.

\section{Multiplicative Convolution on $G={\rm Gl}_{\mathbb{C}}(n)$ and $H={\rm Herm}(n)$}\label{sec:conv}

Already in previous works~\cite{KK2016,KK2019,KR2016,FKK2017,Kieburg2017,KFI2019}, it has been pointed out that particular ensembles are analytically easier to handle than others, in particular the integrals of the spherical transform and its inverse can be explicitly carried out. Here, one class of ensembles, called the polynomial ensembles~\cite{KZ2014}, are preferable to deal with since they always exhibit a determinantal point process even at finite matrix dimension~\cite{Borodin1998}. However, the convolution of two polynomials ensembles do not usually yield a polynomial ensembles. To overcome this obstacle a subclass has been identified~\cite{KK2016,KK2019,KR2016,FKK2017}. This subclass has been first coined the polynomial ensembles of derivative type~\cite{KK2016,KK2019,KR2016} due to its special form but then renamed to P\'olya ensembles~\cite{FKK2017}. The reason for the re-baptism is the fact that the ensembles are bijectively related to P\'olya frequency functions~\cite{Polya1913,Polya1915} that satisfy some differentiability and integrability conditions. The definition of these two sets requires two sets of $L^{1}$ functions on a subset $\mathbb{I}\subset\mathbb{R}$,
\begin{equation}\label{Ln1}
\begin{split}
L_n^{1}(\mathbb{I})=&\left\{f\in L_n^{1}(\mathbb{I})\left| \int_{\mathbb{I}}dx |x^j f(x)|<\infty\ {\rm for\ all}\ j\in[0,n]\right\}\right.
\end{split}
\end{equation}
and
\begin{equation}\label{Pn1}
\begin{split}
\mathcal{P}_n^{1}=&\left\{f\in L_n^{1}(\mathbb{R}_+)| f\circ\exp\ {\rm is}\ n\text{-times\ differentiable\ and\ a P\'olya\ frequency\ function\ of\ order}\ n, {\rm and}\right.\\
&\left. (x\partial)^jf\in L_n^{1}(\mathbb{R}_+)\  {\rm for\ all}\ j=0,\ldots,n\right\},
\end{split}
\end{equation}
where $f\circ\exp: \mathbb{R}\to\mathbb{R}$  is given by $f\circ\exp(x)=f(e^x)$.
We recall that a P\'olya frequency function is given as a real, non-zero function on $\mathbb{R}$ that satisfies the inequality~\cite{Polya1913,Polya1915}
\begin{equation}
\Delta_j(x)\Delta_j(y)\det[f(x_b-y_c)]_{b,c=1,\ldots,j}\geq0
\end{equation}
for all $x,y\in \mathbb{R}^j$ and $j=1,\ldots,n$.

\begin{definition}[Polynomial and P\'olya Ensembles~\cite{KZ2014,KK2016,KR2016,FKK2017}]\label{def:ensembles}\

\begin{enumerate}
\item		A polynomial ensemble on $G_{l,m}^{(n)}$ associated to the weights $w_1,\ldots,w_n\in L_{n-1}^1(\mathbb{R}_+)$ is a $K$-invariant ensemble whose squared singular values $a\in A$ are distributed as
			\begin{equation}\label{pol.G}
			p_A(a)=\frac{1}{n!}\Delta_n(a)\frac{\det[w_b(a_c)]_{b,c=1,\ldots,n}}{\det[\mathcal{M}w_b(c)]_{b,c=1,\ldots,n}}\in L_{\rm Prob}^{1}(A_n).
			\end{equation}
\item		A P\'olya ensemble on $G_{l,m}^{(n)}$ associated to the weight $\omega\in \mathcal{P}_{n-1}^{1}$ is a polynomial ensemble with
			\begin{equation}\label{Polya.G}
			w_b(a_c)=(-a_c\partial_c)^{b-1}\omega(a_c),
			\end{equation}
			i.e.,
			\begin{equation}\label{Polya.G.b}
			p_A(a)=\frac{1}{\prod_{j=1}^{n} j!\mathcal{M}\omega(j)}\Delta_n(a)\det[(-a_c\partial_c)^{b-1}\omega(a_c)]_{b,c=1,\ldots,n}.
			\end{equation}
\item		A polynomial ensemble on $H_m^{(n)}$ associated to the weights $w_1,\ldots,w_n\in L_{n-1}^1(\mathbb{R})$ is a $K$-invariant ensemble whose eigenvalues $a\in D$ are distributed as
			\begin{equation}\label{pol.H}
			p_D(a)=\frac{1}{n!}\Delta_n(a)\frac{\det[w_b(a_c)]_{b,c=1,\ldots,n}}{\det[\mathcal{M}w_b(c,c-1)]_{b,c=1,\ldots,n}}\in L_{\rm Prob}^{1}(D_n).
			\end{equation}
\end{enumerate}
\end{definition}

The true potential of the ensembles above unfolds itself when looking at their spherical transform that take particularly simple forms.

\begin{proposition}[Spherical Transforms of Polynomial and P\'olya Ensembles]\label{prop:spher.ensembles}\

\begin{enumerate}
\item		Let $P_G$ the distribution of a polynomial ensemble on $G_{l,m}^{(n)}$ associated to the weights $w_1,\ldots,w_n\in L_{n-1}^1(\mathbb{R}_+)$. Then, its spherical transform is (see~\cite{KK2016,KK2019} for the square case)
			\begin{equation}\label{spher.pol.G}
			\mathcal{S}_\Psi P_G(s)=\left(\prod_{j=0}^{n-1}j!\right)\frac{\det[\mathcal{M}w_b(s_c+1)]_{b,c=1,\ldots,n}}{\Delta_n(s)\det[\mathcal{M}w_b(c)]_{b,c=1,\ldots,n}},
			\end{equation}
			where $\mathcal{M}$ is the univariate Mellin transform on $\mathbb{R}_+$.
\item		The spherical transform of the distribution $P_G$, which is a P\'olya ensemble on $G_{l,m}^{(n)}$ associated to the weight $\omega\in \mathcal{P}_{n-1}^{1}$, is explicitly given by (see~\cite{KK2016,KK2019} for $l=m=n$)
			\begin{equation}\label{spher.Polya.G}
			\mathcal{S}_\Psi P_G(s)=\prod_{j=1}^{n}\frac{\mathcal{M}\omega(s_j+1)}{\mathcal{M}\omega(n-j+1)}.
			\end{equation}
\item		The spherical transform of  $P_H$ describing a polynomial ensemble on $H_m^{(n)}$ associated to the weights $w_1,\ldots,w_n\in L_{n-1}^1(\mathbb{R})$ is equal to
			\begin{equation}\label{spher.pol.H}
			\mathcal{S}_\Phi P_H(s,L)=\left(\prod_{j=0}^{n-1}j!\right)\frac{\det[\mathcal{M}w_b(s_c+1,L_c)]_{b,c=1,\ldots,n}}{\Delta_n(s)\det[\mathcal{M}\omega(c,c-1)]_{b,c=1,\ldots,n}}
			\end{equation}
			with $\mathcal{M}$ being the univariate Mellin transform on $\mathbb{R}$, see Eq.~\eqref{Mellin}.
\end{enumerate}
\end{proposition}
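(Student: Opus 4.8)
The plan is to prove parts~(1) and~(3) directly, by substituting the ensemble density into the defining integral of the spherical transform and collapsing the resulting double determinant with Andréief's integration formula, and then to obtain part~(2) from part~(1) by a Mellin differentiation rule. For part~(1) I would insert the polynomial density~\eqref{pol.G} into the definition~\eqref{Strafo:s} of $\mathcal{S}_\Psi$. The Vandermonde $\Delta_n(a)$ supplied by the density cancels the $\Delta_n(a)$ in the spherical function~\eqref{spher:s.b}, so that only the product of the two determinants $\det[w_b(a_c)]_{b,c}$ and $\det[a_c^{s_b}]_{b,c}$ remains under the integral. The crucial step is Andréief's (Heine--de Bruijn) identity,
\begin{equation}
\int_{A_n}da\,\det[w_b(a_c)]_{b,c}\det[a_c^{s_b}]_{b,c}=n!\,\det\left[\int_0^\infty w_b(a)\,a^{s_c}\,da\right]_{b,c}=n!\,\det[\mathcal{M}w_b(s_c+1)]_{b,c},
\end{equation}
the last equality being the definition of the univariate Mellin transform on $\mathbb{R}_+$. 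The factor $n!$ produced by Andréief cancels the $1/n!$ in~\eqref{pol.G}, and collecting the remaining prefactors from~\eqref{Strafo:s} and~\eqref{pol.G} (the $\Delta_n(s)$ and $\det[\mathcal{M}w_b(c)]_{b,c}$ simply carrying through) yields~\eqref{spher.pol.G}. Part~(3) is entirely parallel: one inserts~\eqref{pol.H} into~\eqref{Strafo:as}, cancels $\Delta_n(a)$, and applies Andréief over $D_n\simeq\mathbb{R}^n$, the only difference being that each overlap integral is now $\int_\mathbb{R}w_b(a)[\sign(a)]^{L_c}|a|^{s_c}\,da=\mathcal{M}w_b(s_c+1,L_c)$, the Mellin transform on the full line~\eqref{Mellin}; this gives~\eqref{spher.pol.H}.

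For part~(2) I would specialise part~(1) to the Pólya weights $w_b=(-a\partial_a)^{b-1}\omega$ of~\eqref{Polya.G}. Repeated integration by parts gives the Mellin differentiation rule $\mathcal{M}[(-a\partial_a)^{b-1}\omega](s)=s^{b-1}\mathcal{M}\omega(s)$, so that $\mathcal{M}w_b(s_c+1)=(s_c+1)^{b-1}\mathcal{M}\omega(s_c+1)$. Factoring $\mathcal{M}\omega(s_c+1)$ out of the $c$-th column turns the numerator determinant into $\left(\prod_{c=1}^n\mathcal{M}\omega(s_c+1)\right)\det[(s_c+1)^{b-1}]_{b,c}$, whose remaining Vandermonde equals $\Delta_n(s)$. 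The same reduction applied to the denominator $\det[\mathcal{M}w_b(c)]_{b,c}$ produces $\left(\prod_{c=1}^n\mathcal{M}\omega(c)\right)\det[c^{b-1}]_{b,c}=\left(\prod_{c=1}^n\mathcal{M}\omega(c)\right)\prod_{j=0}^{n-1}j!$. The prefactor $\prod_{j=0}^{n-1}j!$ in~\eqref{spher.pol.G} cancels the factor $\det[c^{b-1}]_{b,c}$, while the Vandermonde $\Delta_n(s)$ in the numerator cancels the $\Delta_n(s)$ already present in~\eqref{spher.pol.G}; after reindexing $\prod_{c=1}^n\mathcal{M}\omega(c)=\prod_{j=1}^n\mathcal{M}\omega(n-j+1)$ one reads off~\eqref{spher.Polya.G}.

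The part that requires genuine care, rather than bookkeeping, is the justification of these manipulations in the correct analytic domain. Andréief's identity and the interchange of summation and integration demand absolute convergence, which is exactly what the hypotheses $w_b\in L_{n-1}^1$ secure for a suitable strip of $\RE s_c$; the boundary terms in the integration by parts of part~(2) vanish precisely because of the integrability and differentiability conditions encoded in $\omega\in\mathcal{P}_{n-1}^1$. Finally, since every formula carries $\Delta_n(s)$ in the denominator, each identity is first derived for pairwise distinct $s_1,\ldots,s_n$ and then extended to coinciding arguments by analytic continuation, in line with the abuse of notation noted in Definition~\ref{def:spher.trans}.
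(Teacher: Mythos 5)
Your proposal is correct and follows essentially the same route as the paper: the paper's own proof defers parts (1) and (2) to the cited references and disposes of part (3) by exactly your argument (insert the density into the defining integral, apply Andréief, and recognise the entries as Mellin transforms on $\mathbb{R}$), while your derivation of part (2) from part (1) via $\mathcal{M}[(-a\partial_a)^{b-1}\omega](s)=s^{b-1}\mathcal{M}\omega(s)$ and the column factorisation is the standard computation underlying those references. The only added value is that you spell out the details the paper omits, including the correct handling of the boundary terms and of the $\Delta_n(s)$ singularities.
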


\begin{proof}
The first two parts are proven in~\cite{KK2016,KK2019} and the third claim is also straightforward. We only plug the definition~\eqref{pol.H} into Eq.~\eqref{Strafo:as}, apply Andr\'eief's identity~\cite{An86} and identify the integral in the determinant with the Mellin transform~\eqref{Mellin} on $\mathbb{R}$.
\end{proof}

An interesting consequence of the second part of Proposition~\ref{prop:spher.ensembles} is a relation between the spherical transforms of the P\'olya ensembles on $G_{l,m}^{(n)}$   associated to the weight $\omega\in \mathcal{P}_{n-1}^{1}$ and on $G_{l',m'}^{(n')}$ with the weight $\omega'(a)=a^{n-n'}\omega(a)$, where we assume $n>n'$. This relation reads
\begin{equation}
S_\Psi P_{G,\omega}(\diag(s+(n-n')\eins_{n-n'},s^{(n-n')})=S_\Psi P_{G,\omega'}(s).
\end{equation}
We will apply this observation several times in the ensuing sections.

It is a legitimate question to ask whether there is something like a P\'olya ensemble on $H_m^{(n)}$. Indeed, one can choose those ensembles induced by $x=\pm gg^*$ with $g\in G_{m,n}^{(n)}$ where $g$ is drawn from a P\'olya ensemble on $G_{m,n}^{(n)}$. However, one gets the feeling that these are the only realizations when looking for other ensembles. We have seen via trial and error that the positivity condition does not work well with the integrability when the support of the eigenvalues lies on both sides of the real axis. Since a proof is lacking let us phrase the above observation as a conjecture.

\begin{conjecture}[P\'olya ensembles on $H_m^{(n)}$]\label{conj:Polya}\

The only polynomial ensembles on $H_m^{(n)}$ that have the form $w_b(s_c)=(-s_c\partial_c)^{b-1}\omega(s_c)$ in Eq.~\eqref{pol.H} are those induced by $x=\pm gg^*$ with $g$ a random matrix drawn from a P\'olya ensemble on $G_{m,n}^{(n)}$.
\end{conjecture}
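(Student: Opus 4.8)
The plan is to first convert the Pólya (derivative) hypothesis into a statement purely about the univariate weight $\omega$, and then to rule out two‑sided support by a rigidity argument. Writing $\mathcal{D}=-a\,\partial_a$ for the scaling operator, the key preliminary observation is the full‑line Mellin identity $\mathcal{M}[\mathcal{D}^{k}\omega](s,L)=s^{k}\mathcal{M}\omega(s,L)$: on each sign sector the substitution $a=\pm e^{t}$ turns $\mathcal{D}$ into $-\partial_t$, and $\int_{\mathbb{R}}e^{st}(-\partial_t\phi)\,dt=s\int_{\mathbb{R}}e^{st}\phi\,dt$ by parts, the $L$‑grading being merely the $\pm$ combination of the two sectors. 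Feeding $w_b=\mathcal{D}^{b-1}\omega$ into Eq.~\eqref{spher.pol.H} then collapses the column Vandermonde and shows that the spherical transform factorises, $\mathcal{S}_\Phi P_H(s,L)=\prod_{j}\mathcal{M}\omega(s_j+1,L_j)/\mathcal{M}\omega(s_j^{(n)}+1,L_j^{(n)})$, exactly in parallel with Proposition~\ref{prop:spher.ensembles}(2). Equivalently, by Eq.~\eqref{pol.H} the statement that $p_D$ is a probability density is the requirement that the Wronskian‑type determinant $\det[\mathcal{D}^{b-1}\omega(a_c)]_{b,c=1}^{n}$ be sign‑definite, with fixed sign $\epsilon=\sgn\det[\mathcal{M}w_b(c,c-1)]$, on every ordered configuration $a_1<\dots<a_n$ in $\mathbb{R}^n$. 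Splitting $\mathbb{R}=\mathbb{R}_-\sqcup\mathbb{R}_+$ and setting $\phi_\pm(t)=\omega(\pm e^{t})$, this determinant is the object I would attack.

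The heart of the argument is the rank‑two case, which I expect to go through cleanly and without assuming $\omega\geq 0$. On the mixed configuration $a_1<0<a_2$ the determinant is sign‑definite, so integrating it against the strictly positive multiplicative characters $|a_1|^{r-1}$ on $\mathbb{R}_-$ and $a_2^{s-1}$ on $\mathbb{R}_+$ preserves the sign; using the Mellin identity on each column this integral evaluates to $(s-r)F_-(r)F_+(s)$, where $F_\pm(s)=\mathcal{M}\omega_\pm(s)$ are the Mellin transforms of the two pieces. Hence $\epsilon\,(s-r)F_-(r)F_+(s)\geq 0$ for all $r,s$ in the respective convergence strips. Since the normalisation forces $\mathcal{M}\omega(c,\cdot)$ to be finite for $c=1,\dots,n$, both strips contain the real interval $[1,n]$, so I may evaluate the inequality at both $(r,s)$ and $(s,r)$; for $s>r$ these read $\epsilon F_-(r)F_+(s)\geq 0$ and $\epsilon F_-(s)F_+(r)\leq 0$, and letting $s\to r^{+}$ gives $F_-(r)F_+(r)=0$ on an interval. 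By analyticity of $F_\pm$ in their strips one of them vanishes identically, i.e.\ $\omega$ is supported on a single sign.

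For general $n$ the plan is to reduce to this rigidity, and this reduction is where I expect the real difficulty — presumably the reason the statement is only conjectured. The cleanest route is induction via the rank reduction of Remark~\ref{rem:limit}, which at the level of weights sends a rank‑$n$ Pólya ensemble with weight $\omega$ to a rank‑$(n-1)$ one with weight $a\,\omega$ (the shift noted after Proposition~\ref{prop:spher.ensembles}); this operation manifestly preserves two‑sided support, so an inductive descent to $n=2$ would contradict the previous paragraph. The obstacle is that this descent is transparent only at the level of spherical transforms, and one must show that positivity of the rank‑$n$ density is genuinely inherited by the rank‑$(n-1)$ density obtained in the $a_n\to 0,\ s_n\to 0$ limit, rather than merely the algebraic transform identity surviving. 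An alternative, more hands‑on route avoids the limit: fixing $n-2$ eigenvalues at generic values and letting one positive and one negative eigenvalue vary, the cofactor expansion turns sign‑definiteness into the requirement that $P(-\partial_t)\phi_-$ and $Q(-\partial_t)\phi_+$ be sign‑definite for the families of polynomials $P,Q$ produced by the frozen Wronskian minors; testing these against multiplicative characters as in the rank‑two step should again force $F_+F_-\equiv 0$. Controlling the frozen coefficients and the overlap of convergence strips uniformly in the frozen data is the technical crux.

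Finally, once single‑sign support is established the identification is immediate: if $\omega$ is supported on $\mathbb{R}_+$ then the density~\eqref{pol.H} with $w_b=\mathcal{D}^{b-1}\omega$ is literally the squared‑singular‑value density~\eqref{Polya.G.b} of a Pólya ensemble on $G_{m,n}^{(n)}$ with weight $\omega$, so the Hermitian ensemble is that of $x=gg^{*}$; the case of support on $\mathbb{R}_-$ follows by the reflection $a\mapsto -a$, giving $x=-gg^{*}$. This yields the asserted dichotomy $x=\pm gg^{*}$.
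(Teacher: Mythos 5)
The first thing to say is that the paper does not prove this statement: it is explicitly posed as a conjecture, with the author remarking immediately before it that ``a proof is lacking''. So there is no proof of the paper's to compare yours against, and your proposal has to stand on its own. It does not, yet: by your own account the reduction of the general-rank case to the rank-two rigidity is not carried out, and that reduction is the actual content of the conjecture. What you do establish (modulo technicalities) is the case $n=2$: the computation $\int_{-\infty}^{0}\int_{0}^{\infty}|a_1|^{r-1}a_2^{s-1}\det[\mathcal{D}^{b-1}\omega(a_c)]\,da_1\,da_2=(s-r)F_-(r)F_+(s)$ is correct, the symmetrisation in $(r,s)$ does force $F_-F_+\equiv 0$ on the common strip of analyticity, and Mellin inversion then kills one sign sector. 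That is a genuine partial result, but two caveats remain even there: the identity $\mathcal{M}[\mathcal{D}\omega](s,L)=s\,\mathcal{M}\omega(s,L)$ needs vanishing boundary terms at $0$ and $\pm\infty$, which the hypothesis $\mathcal{D}^{b-1}\omega\in L^1_{n-1}(\mathbb{R})$ does not supply pointwise; and the final identification with a P\'olya ensemble on $G_{m,n}^{(n)}$ requires $\omega\in\mathcal{P}^1_{n-1}$, i.e.\ the equivalence between sign-definiteness of $\Delta_n(a)\det[\mathcal{D}^{b-1}\omega(a_c)]$ on $A_n$ and the P\'olya frequency property of $\omega\circ\exp$, which you should cite from~\cite{FKK2017} rather than assert as ``literal''.

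Concerning the missing step, your first route is more tractable than you suggest, because the descent from rank $n$ to rank $n-1$ does not have to be performed as a bare $a_n\to 0$, $s_n\to 0$ limit of transform identities: it has a probabilistic realisation. Take $g=\Pi_{n-1,n}k$ with $k\in K_n$ Haar, i.e.\ the degenerate case $M=m=n$, $l=n-1$ of Example~\ref{ex:uni}, so that $\mathcal{S}_\Psi$ of its law is identically $1$ and $C_{n,n}(\tilde{s})=1$. Corollary~\ref{cor:fact:trans} (first case, $n_1=r=n-1<n_2=n$) then gives $\mathcal{S}_\Phi$ of the law of $gxg^*=\Pi_{n-1,n}kxk^*\Pi_{n,n-1}$ as $\prod_{j=1}^{n-1}\mathcal{M}\omega(s_j+2,L_j+1)$ up to normalisation, which is the factorised transform of the derivative-type ensemble with weight $a\,\omega(a)$ (note $\mathcal{D}(a\omega)=a(\mathcal{D}-1)\omega$, so the new weights lie in $L^1_{n-2}(\mathbb{R})$). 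Since this is the law of an actual random matrix, positivity of the rank-$(n-1)$ density is automatic, and injectivity of $\mathcal{S}_\Phi$ (Proposition~\ref{prop:inv}) identifies it with the derivative-type form. Iterating down to rank $2$ and applying your rigidity argument to $a^{n-2}\omega(a)$ would force single-sign support of $\omega$. If you can make the boundary-term hypotheses precise, this closes the gap you flagged; as written, however, the proposal is an incomplete sketch of a proof of an open conjecture, not a proof.
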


The reason why we care about P\'olya ensembles is the structural extremely simple form of their spherical transform, cf., Eq.~\eqref{spher.Polya.G}. It allows immediate conclusions for the convolutions~\eqref{conv.G} and~\eqref{conv.H}. For instance convolutions on $G_n$ of P\'olya ensembles are closed and have a semi-group action on polynomial ensembles on $G_n$, see~\cite{KK2019}. We will see that the P\'olya ensemble on $G_{l,m}^{(n_1)}$ have also a natural action on polynomial ensembles on $H_m^{(n_2)}$.

\begin{example}[Projections/Inclusions Revisited]\label{ex:proj}\

To generalize this to $G_{l,m}^{(n)}$ and $H_m^{(n)}$, we need to combine Proposition~\ref{prop:spher.ensembles} with the factorization Proposition~\ref{cor:fact:trans}. There the identification of the weight that creates the constant $C_{M,m}(\diag(s+(m-l)\eins_l,s^{(m-l)}))$ with $l\leq m\leq M$, see Eq.~\eqref{spher.uni}, is crucial. The good thing is that it corresponds to the weight
			\begin{equation}\label{omegal}
			\omega_{M-m}^{(m-l)}(a)=\left\{\begin{array}{cl} \displaystyle (M-m)a^{m-l}(1-a)^{M-m-1}\Theta(1-a), & M>m,\\ \delta(a-1), & M=m. \end{array}\right.
			\end{equation}
			The prefactor $(M-m)$ in the first case guarantees that the limit $M\to m$ yields the Dirac delta function in the second case  in the sense of weak topology. If $M\geq m+n$, this weight even satisfies the differentiability condition so that it can be associated to a P\'olya ensemble. This condition is exactly the one needed in~\cite{KKS2016} when considering products involving truncated unitary matrices.
\end{example}

Henceforth, we are more interested in the convolution~\eqref{conv.H} instead of Eq.~\eqref{conv.G}. The latter has been discussed extensively in several works over the past years~\cite{Kuijlaars2016,CKW2015,AI2015,KK2019,FKK2017}. Especially, we aim at computing the change of the bi-orthonormal functions $\{p_j,q_j\}_{j=0,\ldots,n-1}$ of the corresponding polynomial ensembles. Let us recall what these bi-orthonnormal functions are for a polynomial ensemble associated to the weights $\{w_j\}_{j=0,\ldots,n-1}$. The functions $\{p_j,q_j\}_{j=0,\ldots,n-1}$ satisfy three conditions, namely that $\{p_j\}_{j=0,\ldots,n-1}$ and $\{q_j\}_{j=0,\ldots,n-1}$  are bases of the linear span of the monomials $\{x^j\}_{j=0,\ldots,n-1}$ and of the weights $\{w_j\}_{j=1,\ldots,n}$, respectively, and additionally respect the bi-orthonormality condition
\begin{equation}\label{biortho}
\int da p_j(a)q_i(a)=\delta_{j,i}\ \text{for all}\ i,j=0,\ldots,n-1.
\end{equation}
Once these bi-orthonormal functions are given, it is straightforward to construct the kernel
\begin{equation}\label{kernel}
K_n(a_1,a_2)=\sum_{j=0}^{n-1}p_j(a_1)q_j(a_2)
\end{equation}
that determines all $\kappa$-point correlation functions
\begin{equation}\label{kpoint}
R_\kappa(a_1,\ldots a_{\kappa})=\frac{n!}{(n-\kappa)!}\prod_{j=\kappa+1}^n\int da_j p(a)=\det[K_n(a_b,a_c)]_{b,c=1,\ldots,\kappa}.
\end{equation}
The latter comprises all spectral statistical information of the random matrix. We would like to emphasize that the bi-orthonormal functions are not unique. The various choices can be exploited by picking the one suited the best for the considered problem. Additionally, one can specify a polynomial ensemble, either on $G_{l,m}^{(n)}$ or on $H_m^{(n)}$, by its bi-orthonormal functions $\{p_j,q_j\}_{j=0,\ldots,n-1}$ instead of the associated weights $\{w_j\}_{j=1,\ldots,n-1}$. We will exploit this when multiplying P\'olya ensembles on $G_{l,m}^{(n_1)}$ with polynomial ensembles on $H_m^{(n_2)}$ in Subsection~\ref{sec:mult.H}. Therein, the goal will be to understand the change of the bi-orthonormal functions.

\subsection{Multiplication of a Fixed Matrix on $H_m^{(n)}$}\label{sec:mult.G}

First, we would like to consider the case where $x\in H_m^{(n_2)}$ is fixed and $g\in G_{l,m}^{(n_1)}$ is a P\'olya ensemble.  Such a random matrix has the following joint probability densities of its eigenvalues.

\begin{theorem}[JPDF of a P\'olya Ensemble on $G_{l,m}^{(n_1)}$ Multiplied to a Fixed Matrix in $H_m^{(n_2)}$]\label{thm:jpdf.fixed}\

We choose four integers $l,m, n_1,n_2$ satisfying the two conditions $l,m\leq n_1$ and $m\leq n_2$. Moreover, we draw a random matrix $g\in G_{l,m}^{(n_1)}$ described by a P\'olya ensemble associated to the weight $\omega\in \mathcal{P}_{n-1}^{1}$ and $x\in H_m^{(n_2)}$ fixed with $a\in D_n$ its non-zero and non-degenerate eigenvalues. Then, we have the two cases:
\begin{enumerate}
\item		$n_1\geq n_2$:	Then the eigenvalues $\tilde{a}$ of the product $gag^*$ are distributed by
\begin{equation}\label{pfixed.g}
p(\tilde{a}|a)=\frac{1}{n_2!}\left(\prod_{j=1}^{n_2}\frac{(m-j)!}{(m-n_1)!(n_1-j)!\mathcal{M}\omega(n_1-j+1)}\right)\frac{\Delta_{n_2}(\tilde{a})}{\Delta_{n_2}(a)}\det\left[\tilde{\omega}_\geq(\tilde{a}_b|a_c)\right]_{b,c=1,\ldots,n_2}
\end{equation}
with the weight
\begin{equation}\label{weight.fixed.g}
\tilde{\omega}_\geq(\tilde{a}_b|a_c)=\frac{1}{|a_c|}\tilde{\omega}_\geq\left(\left.\frac{\tilde{a}_b}{a_c}\right|1\right)=\Theta(\tilde{a}_b a_c)\left(\frac{\tilde{a}_b}{a_c}\right)^{n_1-n_2}\int_0^\infty \frac{d a'}{|a_c|a'}\omega_{m-n_1}^{(0)}(a')\omega\left(\frac{\tilde{a}_b}{a_ca'}\right),
\end{equation}
where $\omega_{m-n_1}^{(0)}$ is defined in Eq.~\eqref{omegal} and $\Theta$ the Heaviside step function. The integral is evaluated at $a'=1$ when $m=n_1$.
\item		$n_1< n_2$: In this case the joint probability distribution of the eigenvalues $\tilde{a}$ of the product $gag^*$ is
\begin{equation}\label{pfixed.l}
\begin{split}
p(\tilde{a}|a)=&\frac{1}{n_1!}\left(\prod_{j=1}^{n_1}\frac{(m-j)!}{(m-n_1)!(n_1-j)!\mathcal{M}\omega(n_1-j+1)}\right)\frac{\Delta_{n_1}(\tilde{a})}{\Delta_{n_2}(a)}\det\left[\begin{array}{c} a_c^{b-1} \\ \tilde{\omega}_<(\tilde{a}_d|a_c) \end{array} \right]_{\substack{b=1,\ldots, n_2-n_1 \\ d=1,\ldots,n_1 \\ c=1,\ldots, n_2}}\\
=&\frac{1}{n_1!}\left(\prod_{j=1}^{n_1}\frac{(m-j)!}{(m-n_1)!(n_1-j)!\mathcal{M}\omega(n_1-j+1)}\right)\Delta_{n_1}(\tilde{a})\det\left[\sum_{c=1}^{n_2} \frac{e_{n_1-b}(-a_{\neq c})}{\prod_{h\neq c}(a_c-a_h)}\tilde{\omega}_<(\tilde{a}_d|a_c)\right]_{b,d=1,\ldots,n_1},
\end{split}
\end{equation}
with $a_{\neq j}=\diag(a_1,\ldots,a_{j-1},a_{j+1},\ldots,a_{n_2})\in D_{n_2-1}$, the weight
\begin{equation}\label{weight.fixed.l}
\tilde{\omega}_<(\tilde{a}_d|a_c)=\frac{a_c^{n_2-n_1}}{|a_c|}\tilde{\omega}_<\left(\left.\frac{\tilde{a}_d}{a_c}\right|1\right)=\Theta(\tilde{a}_d a_c)a_c^{n_2-n_1}\int_0^\infty \frac{d a'}{|a_c|a'}\omega_{m-n_1}^{(0)}(a')\omega\left(\frac{\tilde{a}_d}{a_ca'}\right)
\end{equation}
and the elementary symmetric polynomials of order $o\leq n_2-1$ of $n_2-1$ elements
\begin{equation}\label{elementary}
e_o(-a_{\neq j})=\oint\frac{dz}{2\pi i z^{n_2-o}} \prod_{h\neq j}(z-a_h),
\end{equation}
where the contour is taken counter-clockwise around the origin.
\end{enumerate}
In particular, in both cases the random matrix $g\tilde{a} g^*$ is equal to a polynomial ensemble on $H_l^{(\min\{n_1,n_2\})}$. For degenerate spectra of $x$ one needs to perform l'H\^opital's rule.
\end{theorem}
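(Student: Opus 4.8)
The plan is to realise the random matrix $g x g^*$ as the multiplicative convolution $P_G\circledast Q_H$ of Eq.~\eqref{conv.H}, where $P_G$ is the Pólya ensemble on $G_{l,m}^{(n_1)}$ and $Q_H$ is the point-mass ensemble on $H_m^{(n_2)}$ supported on the single $K$-orbit of the fixed matrix $x$, and then to push everything through the spherical-transform machinery of Section~\ref{sec:spherical}. Since $Q_H$ is a delta distribution on one orbit, Definition~\ref{def:spher.trans} gives $\mathcal{S}_\Phi Q_H(s,L)=\Phi(s,L;a)$, which is explicit by Theorem~\ref{thm:spher.func}, while $\mathcal{S}_\Psi P_G$ is the product form~\eqref{spher.Polya.G}. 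Feeding these into the factorization Corollary~\ref{cor:fact:trans} produces a closed expression for $\mathcal{S}_\Phi[P_G\circledast Q_H]$, and the eigenvalue density is then recovered by the inverse transform~\eqref{Strafo:as:inv} of Proposition~\ref{prop:inv}. The two cases of the theorem correspond exactly to the two branches of~\eqref{factorization.conv}, i.e.\ to which of $n_1,n_2$ equals the rank $r=\min\{n_1,n_2\}$.

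First I would treat $n_1\geq n_2$, so $r=n_2$ and~\eqref{factorization.conv} reads $\mathcal{S}_\Phi[P_G\circledast Q_H](s,L)=C_{m,n_1}(\tilde{s})\,\mathcal{S}_\Psi P_G(\tilde{s})\,\Phi(s,L;a)$ with $\tilde{s}=\diag(s+(n_1-n_2)\eins_{n_2},s^{(n_1-n_2)})$. The key simplification is that evaluating~\eqref{spher.Polya.G} at the padded argument $\tilde{s}$ makes the last $n_1-n_2$ factors $\mathcal{M}\omega(\tilde{s}_j+1)/\mathcal{M}\omega(n_1-j+1)$ telescope to unity, leaving only $\prod_{j=1}^{n_2}\mathcal{M}\omega(s_j+n_1-n_2+1)/\mathcal{M}\omega(n_1-j+1)$. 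I would then recognise the Gamma-ratios inside $C_{m,n_1}(\tilde{s})$ of~\eqref{normalization.b} as the univariate Mellin transform of the Beta-type weight $\omega_{m-n_1}^{(0)}$ from~\eqref{omegal}, evaluated at the same shifted argument $s_j+n_1-n_2+1$. By the Mellin convolution theorem~\eqref{Mellin.fact} the product of this factor with $\mathcal{M}\omega$ is precisely $\mathcal{M}(\omega_{m-n_1}^{(0)}\circledast\omega)$, whose one-dimensional inverse is exactly the kernel $\tilde{\omega}_\geq(\tilde{a}_b|a_c)$ of~\eqref{weight.fixed.g}; the shift by $n_1-n_2$ accounts for the prefactor $(\tilde{a}_b/a_c)^{n_1-n_2}$ therein.

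With these identifications the inverse transform~\eqref{Strafo:as:inv} becomes an $n_2$-fold integral carrying two determinants, the one from $\Phi(s,L;a)$ in the fixed eigenvalues $a$ and the inversion-kernel determinant in the output eigenvalues $\tilde{a}$, while the $1/\Delta_{n_2}(s)$ of $\Phi$ cancels the factor $\Delta_{n_2}(\imath s+s^{(n_2)})$ of the inversion and the $1/\Delta_{n_2}(a)$ survives to give the ratio $\Delta_{n_2}(\tilde{a})/\Delta_{n_2}(a)$ in~\eqref{pfixed.g}. I would Leibniz-expand the $a$-determinant, whereupon each frequency $s_b$ decouples into a single univariate Mellin inversion; summing over $L_b\in\{0,1\}$ reassembles the $\sign$ and absolute-value pieces into the signed convolution $\tilde{\omega}_\geq(\tilde{a}_b|a_c)$, and resumming the permutation (an Andr\'eief-type collapse) restores a single $n_2\times n_2$ determinant, yielding~\eqref{pfixed.g} once the factorial and Gamma prefactors are collected.

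For $n_1<n_2$ the roles in~\eqref{factorization.conv} swap ($r=n_1$): the free argument now feeds $\mathcal{S}_\Psi P_G(s)$ with $s\in\mathbb{C}^{n_1}$ while the fixed matrix enters through $\Phi(\tilde{s},\tilde{L};a)$ with padded $\tilde{s},\tilde{L}\in\mathbb{C}^{n_2}$. The new feature is that the $n_2-n_1$ padded frequencies $s^{(n_2-n_1)}$ and signs $L^{(n_2-n_1)}$ sit at fixed integer values, so the corresponding rows of the $n_2\times n_2$ determinant $\det[\sign(a_c)^{\tilde{L}_b}|a_c|^{\tilde{s}_b}]$ collapse to the monomial rows $a_c^{\,b-1}$, whereas only the remaining $n_1$ free frequencies get inverted and yield the weight rows $\tilde{\omega}_<(\tilde{a}_d|a_c)$ of~\eqref{weight.fixed.l}; this produces the mixed determinant in the first line of~\eqref{pfixed.l}. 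The passage to the pure $n_1\times n_1$ determinant in the second line is a Laplace/Schur-complement step, expanding along the Vandermonde block of monomial rows so that the cofactors reproduce $e_{n_1-b}(-a_{\neq c})/\prod_{h\neq c}(a_c-a_h)$ through the contour representation~\eqref{elementary}. The hard part will be exactly this second case, namely controlling how the padded integer frequencies degenerate the spherical-function determinant into a Vandermonde block and then matching the elementary-symmetric-polynomial coefficients in the cofactor expansion; the first case is, by comparison, a routine Andr\'eief collapse once the Mellin convolution is identified. Finally, the assertion that $g x g^*$ is a polynomial ensemble on $H_l^{(\min\{n_1,n_2\})}$ is immediate, since both~\eqref{pfixed.g} and~\eqref{pfixed.l} already display the single-Vandermonde-times-determinant shape of Definition~\ref{def:ensembles}, with the degenerate-spectrum case handled by l'H\^opital's rule at the end.
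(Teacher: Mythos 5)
Your proposal is correct and follows essentially the same route as the paper's proof: take the spherical transform of the distribution of $gxg^*$, apply the factorization of Proposition~\ref{prop:fact.spher}/Corollary~\ref{cor:fact:trans}, absorb the constant $C_{m,\cdot}(\tilde{s})$ as the Mellin transform of $\omega_{m-n_1}^{(0)}$ so that the determinant entries become Mellin transforms of $\tilde{\omega}_{\geq}$ resp.\ $\tilde{\omega}_{<}$, and invert. The only (immaterial) differences are that for $n_1\geq n_2$ the paper concludes by matching against the known transform~\eqref{spher.pol.H} of a polynomial ensemble and invoking injectivity rather than carrying out the inverse integral explicitly, and for $n_1<n_2$ it obtains the second line of~\eqref{pfixed.l} by multiplying with the determinant $\det[e_{n_2-b'}(-a_{\neq c})/\prod_{h\neq c}(a_c-a_h)]=1/\Delta_{n_2}(a)$ and using $\det B\det C=\det BC$, which is equivalent to your Laplace/cofactor expansion.
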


The joint probability density is the starting point in deriving the spectral statistics in terms of its kernel. For this purpose it is crucial to find the corresponding bi-orthonormal functions. The good thing is that we have already well-prepared the weight for that in Theorem~\ref{thm:jpdf.fixed}.

\begin{corollary}[Eigenvalue Statistics of Products of P\'olya Ensembles with Fixed Matrices]\label{cor:stat.fixed}\

We consider the same setting as in Theorem~\ref{thm:jpdf.fixed}. Additionally, we define
\begin{equation}\label{orth.fixed}
\begin{split}
				p_j(\tilde{a})=&a^j,\\
				q_j^{\geq}(\tilde{a})=&\frac{(m+j-n_2)!}{(m-n_1)!(n_1+j-n_2)!}\sum_{c=1}^{n_2} \frac{e_{n_2-j-1}(-a_{\neq c})}{\prod_{h\neq c}(a_c-a_h)}\frac{\tilde{\omega}_\geq(\tilde{a}|a_c)}{\mathcal{M}\omega(n_1+j-n_2+1)},\\
				q_j^{<}(\tilde{a})=&\frac{(m+j-n_1)!}{(m-n_1)!j!}\sum_{c=1}^{n_2} \frac{e_{n_1-j-1}(-a_{\neq c})}{\prod_{h\neq c}(a_c-a_h)}\frac{\tilde{\omega}_<(\tilde{a}|a_c)}{\mathcal{M}\omega(j+1)}.
\end{split}
\end{equation}
Then,  the functions $\{p_j,q_j^\geq \}_{j=0,\ldots,n_2-1}$  are the bi-orthonormal set for $n_1\geq n_2$ and $\{p_j,q_j^<\}_{j=0,\ldots,n_1-1}$ is it for $n_1< n_2$. Hence, the kernels are given as
\begin{equation}\label{kern.fixed.g}
K_{n_2}^{\geq}(\tilde{a}_1,\tilde{a}_2)=\sum_{j=0}^{n_2-1}\sum_{c=1}^{n_2}\frac{(m+j-n_2)!}{(m-n_1)!(n_1+j-n_2)!}\frac{e_{n_2-j-1}(-a_{\neq c})}{\prod_{h\neq c}(a_c-a_h)}\frac{\tilde{a}_1^{j-1}\ \tilde{\omega}_\geq(\tilde{a}_2|a_c)}{\mathcal{M}\omega(n_1+j-n_2+1)}
\end{equation}
for $n_1\geq n_2$ and
\begin{equation}\label{kern.fixed.l}
K_{n_1}^<(\tilde{a}_1,\tilde{a}_2)=\sum_{j=0}^{n_1-1}\sum_{c=1}^{n_2}\frac{(m+j-n_1)!}{(m-n_1)!j!}\frac{e_{n_1-j-1}(-a_{\neq c})}{\prod_{h\neq c}(a_c-a_h)}\frac{\tilde{a}_1^{j-1}\ \tilde{\omega}_<(\tilde{a}_2|a_c)}{\mathcal{M}\omega(j+1)}
\end{equation}
for $n_1< n_2$.
\end{corollary}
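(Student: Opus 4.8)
The plan is to verify directly that the functions in Eq.~\eqref{orth.fixed} satisfy the three defining properties of a bi-orthonormal set for the polynomial ensemble whose joint density is given in Theorem~\ref{thm:jpdf.fixed}, and then to read off the kernel from Eq.~\eqref{kernel}. First I would observe that, up to an $\tilde a$-independent normalisation, the density \eqref{pfixed.g} (resp.\ the second line of \eqref{pfixed.l}) is already of polynomial-ensemble type $\tfrac1{N!}\Delta_N(\tilde a)\det[W_b(\tilde a_c)]$ with $N=\min\{n_1,n_2\}$ and weights $W_c(\tilde a)=\tilde\omega_\geq(\tilde a|a_c)$ for $n_1\geq n_2$ (resp.\ $\hat W_b(\tilde a)=\sum_c\frac{e_{n_1-b}(-a_{\neq c})}{\prod_{h\neq c}(a_c-a_h)}\tilde\omega_<(\tilde a|a_c)$ for $n_1<n_2$). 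Since $p_j(\tilde a)=\tilde a^j$ manifestly span the monomials of degree $<N$, and each $q_j$ in \eqref{orth.fixed} is by construction a linear combination of the $W_c$, the only nontrivial point is the orthonormality relation \eqref{biortho}; once it holds, linear independence of the $q_j$ (and hence the basis property) follows automatically, and \eqref{kern.fixed.g}, \eqref{kern.fixed.l} are immediate from $K_N(\tilde a_1,\tilde a_2)=\sum_{j=0}^{N-1}p_j(\tilde a_1)q_j(\tilde a_2)$.

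The second ingredient is the moment integral of a single weight. I would note that, up to the prefactor, $\tilde\omega_\geq(\tilde a|a_c)$ is the Mellin convolution $\omega_{m-n_1}^{(0)}\circledast\omega$ evaluated at $t=\tilde a/a_c$ and rescaled; substituting $\tilde a=a_c t$ (the sign of $a_c$ being absorbed by $\tilde a^{\,j}=a_c^{\,j}t^{j}$ and the factor $\Theta(\tilde a a_c)$ forcing $t>0$) and invoking the Mellin factorization \eqref{Mellin.fact} yields
\begin{equation}
\int_{\mathbb R} d\tilde a\,\tilde a^{\,j}\,\tilde\omega_\geq(\tilde a|a_c)=a_c^{\,j}\,\mathcal{M}\omega_{m-n_1}^{(0)}(j+n_1-n_2+1)\,\mathcal{M}\omega(j+n_1-n_2+1),
\end{equation}
with the analogous formula carrying $a_c^{\,j+n_2-n_1}$ and arguments $j+1$ for $\tilde\omega_<$. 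The Beta integral for the weight \eqref{omegal} gives the closed form $\mathcal{M}\omega_{m-n_1}^{(0)}(s)=\Gamma(s)(m-n_1)!/\Gamma(s+m-n_1)$, which is exactly the factor needed to cancel the factorial prefactors in \eqref{orth.fixed} on the diagonal.

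The crux, and the step I expect to be the main obstacle, is the symmetric-function identity that collapses the sum over $c$ to a Kronecker delta. I would feed the contour representation \eqref{elementary} for $e_{n_2-j-1}(-a_{\neq c})$ into $q_i^\geq$, interchange the $c$-sum with the contour integral, and recognise the resulting inner sum $\sum_{c}\frac{a_c^{\,j}\prod_{h\neq c}(z-a_h)}{\prod_{h\neq c}(a_c-a_h)}$ as the Lagrange interpolant of $z^{j}$ at the $n_2$ nodes $a_1,\dots,a_{n_2}$. As $j<n_2$ this interpolant equals $z^{j}$ identically, so the contour integral reduces to $\oint\frac{dz}{2\pi i}\,z^{\,j-i-1}=\delta_{j,i}$. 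Combining this with the moment integral, the ratio $\mathcal{M}\omega(j+n_1-n_2+1)/\mathcal{M}\omega(n_1+i-n_2+1)$ becomes unity on $j=i$, and the remaining factorials together with $\mathcal{M}\omega_{m-n_1}^{(0)}(i+n_1-n_2+1)=(i+n_1-n_2)!\,(m-n_1)!/(i+m-n_2)!$ telescope to $1$, establishing $\int d\tilde a\,p_j(\tilde a)q_i^\geq(\tilde a)=\delta_{j,i}$.

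Finally, the case $n_1<n_2$ runs along the same lines, now interpolating $z^{\,j+n_2-n_1}$ (still of degree $<n_2$, hence reproduced exactly) and using the $e_{n_1-j-1}$, $\mathcal{M}\omega(j+1)$ normalisations; the diagonal factorials again cancel via $\mathcal{M}\omega_{m-n_1}^{(0)}(i+1)=i!\,(m-n_1)!/(m+i-n_1)!$. The kernels \eqref{kern.fixed.g}, \eqref{kern.fixed.l} then follow by substitution into \eqref{kernel}, and the degenerate-spectrum statement is obtained by the stated l'H\^opital limit. The only genuinely delicate point is the Lagrange/contour identity; the moment evaluations and the factorial cancellations are routine bookkeeping.
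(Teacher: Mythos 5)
Your proposal is correct and follows essentially the same route as the paper: the moment integral $\int d\tilde a\,\tilde a^{\,j}\tilde\omega_\gtrless(\tilde a|a_c)\propto a_c^{\,j}$ (resp.\ $a_c^{\,n_2-n_1+j}$) computed via the Mellin factorization, combined with the contour/elementary-symmetric-polynomial identity reducing $\sum_c e_{\cdot}(-a_{\neq c})a_c^{\,j}/\prod_{h\neq c}(a_c-a_h)$ to $\delta_{ij}$, which is exactly the paper's Eq.~\eqref{elem.orth} (your Lagrange-interpolation reading of that contour integral is just a cleaner phrasing of the same determinant manipulation). The remaining observations --- that the $q_j$ lie in the span of the weights and that the kernel follows from Eq.~\eqref{kernel} --- match the paper's argument verbatim.
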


\begin{proof}
For $n_1\geq n_2$ the statement above immediately follows from the computation
\begin{equation}\label{proof.fixed.1}
\int_{-\infty}^\infty d\tilde{a}\ \tilde{a}^{b}q_{b'}^\geq(\tilde{a})=\sum_{c=1}^{n_2} \frac{e_{n_2-b'-1}(-a_{\neq c})}{\prod_{h\neq c}(a_c-a_h)} a_c^{b}=\delta_{bb'},
\end{equation}
where the last equality has been shown in Eq.~\eqref{elem.orth}. Moreover, the functions $q_j$ are obviously a linear combination of the original weights $\{\tilde{\omega}_\geq(\tilde{a}|a_c)\}_{c=1,\ldots,n_2}$, which is everything to be shown. In exactly the same way one shows the claim for $n_1<n_2$.
\end{proof}

It is interesting that the structural form of the statistics for the embedding of a matrix from $x$ to $gxg^*$ with $n_1>n_2$ is not that much different from a projection ($n_1<n_2$). This would hint to an intimate relation between both operations albeit the statistics are certainly different. While for $n_1>n_2$ the number of non-zero eigenvalues stays the same, the rank of the matrix  decreases for $n_1<n_2$.

Another point we would like to highlight is  the representation of the kernels~\eqref{kern.fixed.g} and~\eqref{kern.fixed.l} as sums that can also be written in contour integrals under similar conditions as done in~\cite{Kieburg2017} for additive convolutions on matrix spaces. We will omit them here and go over to choosing the matrix $x\in H_{m}^{(n_2)}$ also randomly. Indeed for the case of $g$ being a product of Ginibre matrices this has been achieved in~\cite{Liu2017}.

\subsection{Multiplication of a Polynomial Ensemble on $H_m^{(n)}$}\label{sec:mult.H}

As before the joint probability density will be the starting point of our analysis.

\begin{theorem}[JPDF of a P\'olya Ensemble on $G_{l,m}^{(n_1)}$ Multiplied to a Polynomial Ensemble on in $H_m^{(n_2)}$]\label{thm:jpdf.poly}\

We consider the situation in Theorem~\ref{thm:jpdf.fixed} except that $x\in H_m^{(n_2)}$ is drawn from a polynomial ensemble associated to the weights $w_1,\ldots,w_n\in L_{n-1}^1(\mathbb{R})$. Again, we have to do the following case discussion:
\begin{enumerate}
\item		$n_1\geq n_2$:	The eigenvalues $\tilde{a}$ of the random  matrix product $gxg^*$ are distributed by
\begin{equation}\label{ppoly.g}
p(\tilde{a})=\frac{1}{n_2!}\left(\prod_{j=1}^{n_2}\frac{(m-j)!}{(m-n_1)!(n_1-j)!\mathcal{M}\omega(n_1-j+1)}\right)\Delta_{n_2}(\tilde{a})\frac{\det\left[\int_{-\infty}^\infty da \tilde{\omega}_\geq(\tilde{a}_b|a)w_c(a)\right]_{b,c=1,\ldots,n_2}}{\det\left[\mathcal{M}w_b(c,c-1)\right]_{b,c=1,\ldots,n_2}}
\end{equation}
where $\tilde{\omega}_\geq$ is given in Eq.~\eqref{weight.fixed.g}.
\item		$n_1< n_2$: The joint probability distribution of the eigenvalues $\tilde{a}$ of $gag^*$ is
\begin{equation}\label{ppoly.l}
\begin{split}
p(\tilde{a}|a)=\frac{1}{n_1!}\left(\prod_{j=1}^{n_1}\frac{(m-j)!}{(m-n_1)!(n_1-j)!\mathcal{M}\omega(n_1-j+1)}\right)\Delta_{n_1}(\tilde{a})\frac{\det\left[\begin{array}{c} \mathcal{M}w_c(b,b-1) \\ \int_{-\infty}^\infty da\tilde{\omega}_<(\tilde{a}_d|a)w_c(a) \end{array} \right]_{\substack{b=1,\ldots, n_2-n_1 \\ d=1,\ldots,n_1 \\ c=1,\ldots, n_2}}}{\det\left[\mathcal{M}w_b(c,c-1)\right]_{b,c=1,\ldots,n_2}},
\end{split}
\end{equation}
with the weight $\tilde{\omega}_<$ from Eq.~\eqref{weight.fixed.l}.
\end{enumerate}
\end{theorem}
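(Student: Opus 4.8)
The plan is to combine the harmonic-analysis machinery assembled in the previous sections, essentially reducing Theorem~\ref{thm:jpdf.poly} to Theorem~\ref{thm:jpdf.fixed} by averaging over the eigenvalues of $x$. The key structural input is that the joint density of the product eigenvalues $\tilde{a}$, given a fixed $x$ with eigenvalues $a$, is already known from Theorem~\ref{thm:jpdf.fixed}. Hence I would write
\begin{equation}\label{proof:poly.1}
p(\tilde{a})=\int_{D_{n_2}}da\, p(\tilde{a}|a)\,p_D(a),
\end{equation}
where $p_D(a)$ is the eigenvalue density~\eqref{pol.H} of the polynomial ensemble on $H_m^{(n_2)}$ associated to the weights $w_1,\ldots,w_{n_2}$, and $p(\tilde{a}|a)$ is the conditional density from~\eqref{pfixed.g} or~\eqref{pfixed.l} according to the case $n_1\geq n_2$ or $n_1<n_2$.

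For the case $n_1\geq n_2$, I would substitute the first expression in~\eqref{pfixed.g} together with~\eqref{pol.H} into~\eqref{proof:poly.1}. The Vandermonde $\Delta_{n_2}(a)$ in the denominator of the conditional density~\eqref{pfixed.g} cancels exactly against the $\Delta_{n_2}(a)$ appearing in the numerator of $p_D(a)$ in~\eqref{pol.H}; this cancellation is the crucial simplification that removes the apparent singularities at coinciding $a_c$. What remains is an integral over $a\in D_{n_2}$ of a product of two determinants, namely $\det[\tilde\omega_\geq(\tilde a_b|a_c)]_{b,c}$ and $\det[w_b(a_c)]_{b,c}$, each a function of one $a_c$ per column/row. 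This is precisely the setting of Andr\'eief's identity~\cite{An86}, which converts the integral of a product of two $n_2\times n_2$ determinants into a single determinant of scalar integrals,
\begin{equation}\label{proof:poly.2}
\int_{D_{n_2}}da\,\det[\tilde\omega_\geq(\tilde a_b|a_c)]_{b,c}\,\det[w_{c'}(a_{c})]_{c',c}=n_2!\,\det\left[\int_{-\infty}^\infty da\,\tilde\omega_\geq(\tilde a_b|a)\,w_{c}(a)\right]_{b,c=1,\ldots,n_2}.
\end{equation}
Collecting the prefactors from~\eqref{pfixed.g} and~\eqref{pol.H}, with the $n_2!$ from Andr\'eief cancelling the $1/n_2!$, and identifying $\det[\mathcal{M}w_b(c,c-1)]$ as the normalisation denominator of~\eqref{pol.H}, reproduces~\eqref{ppoly.g} verbatim.

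For the case $n_1<n_2$ the same strategy applies, but now the conditional density~\eqref{pfixed.l} is an $n_1\times n_1$ object built from a rectangular array with rows $a_c^{b-1}$ for $b=1,\ldots,n_2-n_1$ and $\tilde\omega_<(\tilde a_d|a_c)$ for $d=1,\ldots,n_1$. I would use the second (resolved) form of~\eqref{pfixed.l}, again cancel $\Delta_{n_2}(a)$ from~\eqref{pfixed.l} against that of~\eqref{pol.H}, and then recognise the result as an integral of the product of the $n_2\times n_2$ determinant $\det[w_c(a)]$ with the block-structured array; applying Andr\'eief (in the generalised form allowing some rows to be $a$-independent monomial moments) yields the mixed determinant in~\eqref{ppoly.l}, whose upper block $\mathcal{M}w_c(b,b-1)$ arises from integrating the monomials $a^{b-1}$ against $w_c(a)$ over the full real line, which is exactly the Mellin transform~\eqref{Mellin}. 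The main obstacle I anticipate is bookkeeping in this second case: one must verify that the elementary-symmetric-polynomial resolution in~\eqref{pfixed.l} combines correctly with the $\Delta_{n_2}(a)$ of the polynomial ensemble to produce a clean $n_2$-column determinant before Andr\'eief is applied, and that the resulting block determinant has precisely the row ranges claimed. The $n_1\geq n_2$ case is essentially routine once the Vandermonde cancellation is observed; the delicacy is entirely in tracking the rectangular block structure and the shift in arguments of the Mellin transforms for $n_1<n_2$.
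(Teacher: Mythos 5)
Your proposal is correct and follows essentially the same route as the paper: multiply the conditional density from Theorem~\ref{thm:jpdf.fixed} by the eigenvalue density~\eqref{pol.H} of the polynomial ensemble, cancel the Vandermonde $\Delta_{n_2}(a)$, and integrate over $a\in D_{n_2}$ via (generalized) Andr\'eief. The only small slip is that for $n_1<n_2$ you should start from the \emph{first} (block) form of~\eqref{pfixed.l} rather than the resolved one, since that is the form that still carries the explicit $\Delta_{n_2}(a)$ and the rectangular array to which the generalized Andr\'eief identity applies directly.
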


\begin{proof}
The proof is straightforward since we only need to multiply Eqs.~\eqref{pfixed.g} and~\eqref{pfixed.l} with the probability density~\eqref{Polya.G.b} and integrate over $a\in D_{n_2}$. The Vandermonde determinant $\Delta_{n_2}(a)$ cancels and the remaining integral is carried out with the aid of Andr\'eief's identity~\cite{An86} leading to the statements.
\end{proof}

The second result~\eqref{ppoly.l} for $n_1<n_2$ as well as the first one~\eqref{ppoly.g} can be simplified drastically when choosing a set of bi-orthonormal functions $\{p_j,q_j\}_{j=0,\ldots,n_2-1}$ for the polynomial ensemble of $x\in H_m^{(n_2)}$. Firstly, the determinant $\det\left[\mathcal{M}w_b(c,c-1)\right]_{b,c=1,\ldots,n_2}$ in the  denominators is replaced by $\det\left[\int_{-\infty}^\infty da\,p_{b-1}(a)q_{c-1}(a)\right]_{b,c=1,\ldots,n_2}=1$ which equals unity. Secondly, the determinant in the numerator of Eq.~\eqref{ppoly.l} is essentially of size $n_1\times n_1$ since the first $n_2-n_1$ rows become Kronecker deltas. Summarizing, in the terms of the bi-orthonormal functions of the random matrix $x$ , the two equations~\eqref{ppoly.g} and~\eqref{ppoly.l} are equal to
\begin{equation}\label{ppoly.g.b}
p(\tilde{a})=\frac{1}{n_2!}\left(\prod_{j=1}^{n_2}\frac{(m-j)!}{(m-n_1)!(n_1-j)!\mathcal{M}\omega(n_1-j+1)}\right)\Delta_{n_2}(\tilde{a})\det\left[\int_{-\infty}^\infty da \tilde{\omega}_\geq(\tilde{a}_b|a)q_{c-1}(a)\right]_{b,c=1,\ldots,n_2}
\end{equation}
and
\begin{equation}\label{ppoly.l.b}
p(\tilde{a}|a)=\frac{1}{n_1!}\left(\prod_{j=1}^{n_1}\frac{(m-j)!}{(m-n_1)!(n_1-j)!\mathcal{M}\omega(n_1-j+1)}\right)\Delta_{n_1}(\tilde{a})\det\left[\int_{-\infty}^\infty da\tilde{\omega}_<(\tilde{a}_d|a)q_{n_2-n_1+c-1}(a) \right]_{c,d=1,\ldots,n_1},
\end{equation}
respectively.

 This simplification comes in handy when  analysing the transformations of the statistics, especially the set of bi-orthonormal functions, of the Hermitian random matrix $x\in H_m^{(n_2)}$ to the one  of $gxg^*\in H_l^{(r)}$ with $r=\min\{n_1,n_2\}$. There are two reason why this might be of interest. The first one is a practical one. As already explained the bi-orthonormal functions build up the whole spectral statistics for this class of ensembles. The second reason aims at a better understanding of the random matrix multiplication as a statistical process. We have formulated the ``analytical response" of such a multiplication in the following proposition.

\begin{proposition}[Eigenvalue Statistics of Products of P\'olya Ensembles with Polynomial Ensembles]\label{prop:stat.poly}\

We consider the setting of Theorem~\ref{thm:jpdf.poly} and assume that the polynomial ensemble of $x$ corresponds to the bi-orthonormal functions $\{p_j,q_j\}_{j=0,\ldots,n_2-1}$ with the kernel $K_{n_2}(a_1,a_2)$.
Moreover, we define the polynomials
\begin{equation}\label{chi.def}
\begin{split}
\chi_\geq(z)=&\sum_{j=0}^{n_2-1}\frac{(m+j-n_2)!}{(m-n_1)!(n_1+j-n_2)!\mathcal{M}\omega(n_1+j-n_2+1)}z^j,\\
\chi_<(z)=&\sum_{j=0}^{n_1-1}\frac{(m+j-n_1)!}{(m-n_1)!j!\mathcal{M}\omega(j+1)}z^{j+n_2-n_1}.
\end{split}
\end{equation}
 Then, the bi-orthogonal functions $\{\tilde{p}_j,\tilde{q}_j\}_{j=0,\ldots,r-1}$ and kernel $\tilde{K}_r(\tilde{a}_1,\tilde{a}_2)$ of the product $gxg^*$ with rank $r=\min\{n_1,n_2\}$ are
\begin{enumerate}
\item		$n_1\geq n_2$: 
			\begin{equation}\label{stat.pol.g}
			\begin{split}
			\tilde{p}_j(\tilde{a})=&\oint \frac{dz}{2\pi i z}\chi_\geq(z)p_j\left(\frac{\tilde{a}}{z}\right),\\
			\tilde{q}_j(\tilde{a})=&\int_{-\infty}^\infty \frac{ da}{|a|}\tilde{\omega}_\geq\left(\left.\frac{\tilde{a}}{a}\right|1\right)q_j(a),\\
			\tilde{K}_{n_2}(\tilde{a}_1,\tilde{a}_2)=&\oint \frac{dz}{2\pi i z}\chi_\geq(z)\int_{0}^\infty  \frac{ da}{a}\tilde{\omega}_\geq\left(\left.a\right|1\right)K_{n_2}\left(\frac{\tilde{a}_1}{z},\frac{\tilde{a}_2}{a}\right),\\
			\end{split}
			\end{equation}
\item		$n_1<n_2$: 
			\begin{equation}\label{stat.pol.l}
			\begin{split}
			\tilde{p}_j(\tilde{a})=&\tilde{a}^{n_1-n_2} \oint \frac{dz}{2\pi i z}\chi_<(z)p_{n_2-n_1+j}\left(\frac{\tilde{a}}{z}\right),\\
			\tilde{q}_j(\tilde{a})=&\int_{-\infty}^\infty  \frac{da}{|a|}a^{n_2-n_1}\tilde{\omega}_<\left(\left.\frac{\tilde{a}}{a}\right|1\right),\\
			\tilde{K}_{n_1}(\tilde{a}_1,\tilde{a}_2)=&\left(\frac{\tilde{a}_2}{\tilde{a}_1}\right)^{n_2-n_1}\oint \frac{dz}{2\pi i z}\chi_<(z)\int_{0}^\infty \frac{da}{a^{n_2-n_1+1}}\tilde{\omega}_<\left(\left.a\right|1\right)K_{n_2}\left(\frac{\tilde{a}_1}{z},\frac{\tilde{a}_2}{a}\right).\\
			\end{split}
			\end{equation}
			The contour integrals run counter-clockwise around the origin.
\end{enumerate}
\end{proposition}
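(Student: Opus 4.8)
The plan is to verify the three defining properties of a bi-orthonormal system, Eq.~\eqref{biortho}, for the transformed functions $\{\tilde{p}_j,\tilde{q}_j\}$ and then to assemble the kernel from its definition~\eqref{kernel}. The whole statement reduces to a single reciprocity identity between the polynomial $\chi_\geq$ (resp.\ $\chi_<$) of Eq.~\eqref{chi.def} and the transformed weight $\tilde{\omega}_\geq(\cdot|1)$ (resp.\ $\tilde{\omega}_<(\cdot|1)$). For $n_1\geq n_2$ I would first compute the Mellin moments of the positive branch of the weight. Since Eq.~\eqref{weight.fixed.g} exhibits $\tilde{\omega}_\geq(\tilde{a}|1)$ as $\tilde{a}^{\,n_1-n_2}$ times the Mellin convolution $\omega_{m-n_1}^{(0)}\circledast\omega$, the factorization~\eqref{Mellin.fact} together with the Beta integral $\mathcal{M}\omega_{m-n_1}^{(0)}(s)=(m-n_1)!\,\Gamma(s)/\Gamma(s+m-n_1)$ gives
\[
\int_0^\infty d\tilde{a}\,\tilde{a}^{\,k}\,\tilde{\omega}_\geq(\tilde{a}|1)=\frac{(m-n_1)!\,(k+n_1-n_2)!}{(k+m-n_2)!}\,\mathcal{M}\omega(k+n_1-n_2+1).
\]
The crucial observation is that this equals exactly the reciprocal of the coefficient $[\chi_\geq]_k$ of $z^k$ in $\chi_\geq$; an analogous computation with $\tilde{\omega}_<(\cdot|1)$, which carries no extra power of its argument, yields the reciprocal of the coefficient of $z^{k+n_2-n_1}$ in $\chi_<$.

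With this reciprocity the bi-orthonormality becomes immediate. Expanding $p_i(a)=\sum_k c_k^{(i)}a^k$ and using that the contour integral in $\tilde{p}_i$ extracts the coefficient of the relevant power, I obtain $\tilde{p}_i(\tilde{a})=\sum_k c_k^{(i)}[\chi_\geq]_k\,\tilde{a}^{\,k}$. On the other side, interchanging the $\tilde{a}$- and $a$-integrations and substituting $\tilde{a}=a\,u$ — where the Heaviside factor in $\tilde{\omega}_\geq$ forces $u>0$ and, for either sign of $a$, reproduces the measure $du$ — shows that $\int d\tilde{a}\,\tilde{a}^{\,k}\tilde{q}_j(\tilde{a})=\bigl(\int_0^\infty d\tilde{a}\,\tilde{a}^{\,k}\tilde{\omega}_\geq(\tilde{a}|1)\bigr)\int da\,a^{k}q_j(a)$. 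The moment prefactor cancels $[\chi_\geq]_k$, collapsing the double sum back to $\int da\,p_i(a)q_j(a)=\delta_{ij}$. That $\{\tilde{p}_j\}$ is a basis of the monomials follows since the extracted leading coefficient is nonzero, so $\tilde{p}_j$ has exact degree $j$; that $\{\tilde{q}_j\}$ spans the transformed weights is built into Eqs.~\eqref{ppoly.g.b} and~\eqref{ppoly.l.b}, where the weights in the determinant are precisely the $\tilde{q}_{c-1}$.

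The kernel then follows by inserting the transformed functions into~\eqref{kernel}, interchanging the finite sum with the contour and the $a$-integration, and recognising $\sum_j p_j(\tilde{a}_1/z)q_j(a)=K_{n_2}(\tilde{a}_1/z,a)$. A final substitution $a\mapsto\tilde{a}_2/a$ converts the $\mathbb{R}$-integral against $\tilde{\omega}_\geq(\tilde{a}_2/a|1)$ into the stated $\mathbb{R}_+$-integral of $\tilde{\omega}_\geq(a|1)$ paired with $K_{n_2}(\tilde{a}_1/z,\tilde{a}_2/a)$, reproducing~\eqref{stat.pol.g}. The case $n_1<n_2$ proceeds identically once one tracks the index shift $q_j\mapsto q_{n_2-n_1+j}$ and the prefactors $\tilde{a}^{\,n_1-n_2}$ and $a^{\,n_2-n_1}$ from Eq.~\eqref{weight.fixed.l} that implement the rank reduction; the same cancellation then produces $\delta_{ij}$ for $i,j=0,\ldots,n_1-1$.

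I expect the main obstacle to be the sign bookkeeping rather than any structural difficulty: because $x\in H_m^{(n_2)}$ is generally indefinite, all $a$- and $\tilde{a}$-integrals run over the whole real line, and one must verify that the Heaviside constraint $\Theta(\tilde{a}/a)$ renders the weight transformation sign-preserving, so that each Mellin moment factorises cleanly over the two half-lines. A secondary technical point is matching the kernel to the precise contour representation, where the orientation of the counter-clockwise contour and the substitution $a\mapsto\tilde{a}_2/a$ must be tracked; and in the case $n_1<n_2$ one must confirm that the residue extracts only the powers $z^{n_2-n_1},\ldots,z^{n_2-1}$ actually present in $\chi_<$, so that after multiplication by $\tilde{a}^{\,n_1-n_2}$ the function $\tilde{p}_j$ is a genuine polynomial of degree $j$.
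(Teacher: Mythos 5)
Your proposal follows essentially the same route as the paper's proof: the moment identity $\int d\tilde{a}\,\tilde{a}^{k}\,\tilde{\omega}_\geq(\tilde{a}|a)=\frac{(m-n_1)!\,(k+n_1-n_2)!}{(k+m-n_2)!}\,\mathcal{M}\omega(k+n_1-n_2+1)\,a^{k}$ is exactly Eq.~\eqref{proof.stat.poly.1}, the reciprocity with the coefficients of $\chi_\geq$ is the paper's observation that $\tilde{p}_j$ ``compensates exactly those constants'' so that $\int d\tilde{a}\,\tilde{p}_j(\tilde{a})\,\tilde{\omega}_\geq(\tilde{a}|a)=p_j(a)$, and the kernel is assembled the same way (substitution $a\mapsto\tilde{a}_2/a$, interchange of the finite sum with the integrals). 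One step in your outline is under-justified, though, and it is precisely the step the paper singles out for the case $n_1<n_2$. There the contour integral extracts only the powers $z^{n_2-n_1},\dots,z^{n_2-1}$, so after the moment prefactors cancel you are left with $\int da\,\bigl(\sum_{h=0}^{i}c_{n_2-n_1+i,\,n_2-n_1+h}\,a^{n_2-n_1+h}\bigr)\,q_{n_2-n_1+j}(a)$, i.e.\ the pairing of $q_{n_2-n_1+j}$ with only the \emph{truncation} of $p_{n_2-n_1+i}$ to degrees $\geq n_2-n_1$. This is \emph{not} ``the same cancellation'' as in the case $n_1\geq n_2$: to conclude $\delta_{ij}$ you must additionally invoke that $q_{n_2-n_1+j}$ is bi-orthogonal to $p_0,\dots,p_{n_2-n_1-1}$ and hence annihilates every polynomial of degree at most $n_2-n_1-1$, so the discarded low-order terms of $p_{n_2-n_1+i}$ can be restored without changing the integral. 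The paper states this orthogonality explicitly as an ingredient; your write-up notices the truncation only in connection with $\tilde{p}_j$ being a genuine polynomial of degree $j$, not in connection with the bi-orthonormality, where it is actually needed. With that one sentence added, the argument closes.
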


The upper and lower limits for $\chi_\geq$ and $\chi_<$ can be chosen arbitrarily if more moments exist. Indeed it can be even taken to infinity if the series exists in a ring around the origin. This is the case, for instance, for the Ginibre ensemble where $\mathcal{M}\omega(s)=\Gamma(s)$ and for the Jacobi ensemble with  $\mathcal{M}\omega(s)=\Gamma(s+\nu)\Gamma(\mu+n)/\Gamma(s+\nu+\mu+n)$ for $\nu>-1$ and $\mu> 0$, see~\cite{KKS2016,KK2019}. The case of the product of an induced Ginibre ensemble whose weight is equal to $\omega(a)=x^\nu e^{-a}$ has been studied in~\cite{FIL2018}; in particular Theorem~\ref{thm:jpdf.poly} and Proposition~\ref{prop:stat.poly}, up to normalization, become Lemma 2 and Proposition 7 in~\cite{FIL2018}, respectively. The statements above generalize this discussion and avoids the non-compact group integrals encountered in~\cite{FIL2018} that are unknown for more general weights, e.g., for the Jacobi ensemble with $\omega(a)=a^\nu(1-a)^{\mu+n-1}\Theta(1-a)$, Cauchy-Lorentz ensemble with $\omega(a)=a^\nu/(1+a)^{\mu+n}$ or the Muttalib--Borodin ensembles like $\omega(a)=a^\nu e^{-a^\theta}$ with $\theta>0$ or  $\omega(a)=a^\nu e^{-({\rm ln}\,a)^2}$. All of these examples are P\'olya ensembles, and our theoretical framework and its results deals with them in a unifying way.

\begin{example}[Projections and Inclusions of Hermitian Matrices]\label{ex:appl}\

We would like to conclude this section with an example  highlighting that $\omega$ can be even a distribution. This example is given by a projection or an inclusion of the Hermitian matrix $x\in H_{m}^{(n_2)}$.

For the orthogonal projection we choose $g\in G_{l,m}^{(l)}$ (in particular $l\leq m$) to be an $l\times m$ block of a Haar distributed unitary matrix of size $M\times M$, see Example~\ref{ex:uni}. Then, we know from Eq.~\eqref{spher.uni} comparing with Eq.~\eqref{spher.Polya.G} that the corresponding Mellin transform of $\omega$ is given by
\begin{equation}
\mathcal{M}\omega(s)=\frac{\Gamma[M-m+1]\Gamma[s+m-l]}{\Gamma[s+M-l]}.
\end{equation}
This corresponds to the identification $\omega=\omega_{M-m}^{(m-l)}$, see Example~\ref{ex:proj}, which does not always satisfy the differentiability criterion for arbitrary $m,l\leq M$ leading to a distribution for the joint probability density. Nonetheless, we can apply the factorization of the spherical transform, see Corollary~\ref{cor:fact:trans}. This leads to the two cases
\begin{equation}
\begin{split}
\tilde{\omega}_\geq(\tilde{a}|a)=&\frac{(m-l)!(M-m)!}{(M-l)!}\frac{\Theta(\tilde{a} a)}{|a|}\omega_{M-l}^{(l-n_2)}\left(\frac{\tilde{a}}{a}\right),\quad \chi_\geq(z)=\sum_{j=0}^{n_2-1}\frac{(M+j-n_2)!}{(m-l)!(l+j-n_2)!(M-m)!}z^j
\end{split}
\end{equation}
and
\begin{equation}
\begin{split}
\tilde{\omega}_<(\tilde{a}|a)=&\frac{(m-l)!(M-m)!}{(M-l)!}\frac{\Theta(\tilde{a} a)}{|a|}\omega_{M-l}^{(0)}\left(\frac{\tilde{a}}{a}\right),\quad \chi_<(z)=\sum_{j=0}^{l-1}\frac{(M-l+j)!}{(m-l)!j!(M-m)!}z^{j+n_2-l}
\end{split}
\end{equation}
depending on whether $l\geq n_2$ or $l< n_2$, respectively. An orthogonal projection in the original sense is given by $m=M$ since the unitary matrix can be absorbed into $x$ due to the $K$-invariance of the latter. Thus it reduces to a projection onto the first $l$ rows. For $l=m=M$ we notice that the functions naturally reduce to those that keep the statistics the same.

For the inclusion we consider $l\geq m$ with $g\in G_{l,m}^{(m)}$. This leads to the Mellin transform of $\omega$ 
\begin{equation}
\mathcal{M}\omega(s)=\frac{\Gamma[M-l+1]\Gamma[s+l-m]}{\Gamma[s+M-m]}.
\end{equation}
which corresponds to $\omega=\omega_{M-l}^{(l-m)}$.  Since there is only one situation to consider namely $l\geq m\geq n_2$, the constant in front of Corollary~\ref{cor:fact:trans} is equal to unity ($C_{m,m}(\tilde{s})=1$). Therefore the two functions in the transformation of the eigenvalue statistics are
\begin{equation}
\begin{split}
\tilde{\omega}_\geq(\tilde{a}|a)=&\frac{\Theta(\tilde{a} a)}{|a|}\omega_{M-l}^{(l-m)}\left(\frac{\tilde{a}}{a}\right),\quad \chi_\geq(z)=\sum_{j=0}^{n_2-1}\frac{(M+j-m)!}{(l+j-m)!(M-l)!}z^j.
\end{split}
\end{equation}
A true random inclusion, that is unitarily invariant, would be the case $M=l$ which interestingly does not change the statistics of $x$ to $gxg^*$ at all. Indeed it becomes immediately clear why it is so when considering the characteristic polynomial where we have
\begin{equation}
\det(gxg^*-a\eins_M)=\det(\Pi_{M,m}x\Pi_{m,M}-a\eins_M)=\det(x-a\eins_M)
\end{equation}
for any $a\in\mathbb{C}$. In the first equality, we have exploited the fact $g$ can be written as product of unitary matrix and the projection $\Pi_{M,m}$ when $M=l$.
\end{example}

\section{Conclusions}\label{sec:conclusio}

We extended the harmonic analysis approach to products of complex rectangular matrices and Hermitian matrices of a fixed rank. As already experienced for products of real asymmetric and real anti-symmetric matrices~\cite{KFI2019}, we need two different spherical functions and, hence, transforms, one for each matrix space. They slightly differ from the original definition by Harish-Chandra et al., see~\cite{Helgason2000} and references therein, in the normalization and the duplication of the complex plane of the ``Mellin-Fourier'' (frequency) parameter $s$. In total, one needs $2^r$ copies for a matrix of rank $r$ denoted by the vector $L\in\mathbb{Z}_2^r\simeq\{0,1\}^r$. These copies are essential to keep the information of the number of positive and negative eigenvalues, since this number stays fixed in such a product as already observed in~\cite{FIL2018}. In spite of these two modifications of the spherical transform, the results resemble very much the original results for matrices on the general (special) linear group~\cite{Helgason2000,KK2016}.

We applied this theoretical framework to a product of a P\'olya ensemble on the complex rectangular matrices and a 
fixed Hermitian matrix as well as a random Hermitian matrix drawn from a polynomial ensemble. All matrices have a specific rank that does not need to be maximal. Both cases yield again polynomial ensembles.  We computed their joint probability densities of the eigenvalues, sets of bi-orthonormal functions and their kernel. For the case of a random Hermitian matrix it is noteworthy to say that the contour integral representations of the bi-orthonormal functions and kernels look extremely similar to those already found for products of complex matrices~\cite{KK2019} and sums on the classical Lie-algebras~\cite{Kieburg2017}. Those formulas are an ideal basis to start a large $n$-analysis. A work on the hard edge statistics of the products of multiplicative P\'olya ensembles with a GUE is currently in preparation~\cite{Kieburg2019}.

What is still puzzling, even disturbing, is the rather different generalization of the spherical function when comparing the case of real antisymmetric matrices~\cite[Equation~(2.11)]{KFI2019} and of Hermitian matrices, see Eq.~\eqref{spher:as}. In the former we only  omitted each second frequency $s_j$, since they correspond to vanishing determinants, while in the latter we even needed to extend the frequency space to an additional parameter set $L$. Thus, harmonic analysis on specific representations of Lie groups seems to avoid a simple unified approach. Regarding this point, we should mention that the corresponding harmonic analysis for the adjoint action of the general linear groups on  the Lie algebras of the orthogonal matrices of odd dimension and of unitary symplectic matrices are still open. Maybe when these gaps are filled, one can easier identify the proper framework that encompasses all these cases and, hopefully, even more.

\section*{Acknowledgements}

I am grateful for the fruitful discussions with Holger K\"osters. Additionally, I would like to thank Peter Forrester and Dang-Zheng Liu for reading the first draft of this work.

\appendix

\section{Proofs of Sec.~\ref{sec:spherical}}

In this appendix, we provide the proofs of Theorem~\ref{thm:spher.func} in Subsection~\ref{Proof:spher.func}, of Proposition~\ref{prop:fact.spher} in Subsection~\ref{Proof:fact.spher}, and of Proposition~\ref{prop:inv} in Subsection~\ref{Proof:inv}. Most ideas follow those employed in~\cite{Kieburg2017} and~\cite{KFI2019}.

\subsection{Proof of Theorem~\ref{thm:spher.func}}\label{Proof:spher.func}

Due to the unitary invariance of the spherical function, we can diagonalize $x=\hat{k}\,\diag(a,0,\ldots,0)\hat{k}^*\in H_l^{(n)}$ with $a\in D_n$  and absorb the diagonalizing  unitary matrix in the Haar distributed matrix $k\in K_n$ in Eq.~\eqref{spher:as}. To proceed further, we first consider the case $\tilde{a}\in H_l^{(l)}=H_l$ and, afterwards, send $\tilde{a}\to\diag(a,0,\ldots,0)$ for $H_l^{(n)}$ with $l>n$  after we have set $\tilde{s}=\diag(s+(l-n)\eins_n,s^{(l-n)})$ and $\tilde{L}=\diag(L+(l-n)\eins_n,L^{(l-n)})$. The particular choice of $\tilde{s}$ and $\tilde{L}$  guarantees that the following product in the definition of $\Phi$,
\begin{equation}\label{proof:spher.func.0}
\prod_{j=n+1}^{l} {\rm sign}[\det \Pi_{j,l-1}\tilde{k}x'(k)\tilde{k}^*\Pi_{l-1,j}]^{\tilde{L}_j-\tilde{L}_{j+1}-1} |\det \Pi_{j,l-1}\tilde{k}\tilde{a}\tilde{k}^*\Pi_{l-1,j}|^{\tilde{s}_j-\tilde{s}_{j+1}-1},
\end{equation}
is set to unity, because it would vanish for matrices of rank $n$.

In the first step, we construct a recursion in $l$ for ${\rm Re}\,(\tilde{s}_j-\tilde{s}_{j+1})\geq 2$ for all $j=1,\ldots,l$. To achieve this goal, we shift $k\to\diag(\tilde{k},1)k$ with an auxiliary Haar distributed $\tilde{k}\in K_{l-1}$. Then we have
\begin{equation}\label{proof:spher.func.1}
\begin{split}
	&\Phi(\tilde{s},\tilde{L}; \tilde{a})\\
	=&[{\rm sign}(\det \tilde{a})]^{\tilde{L}_l} |\det \tilde{a}|^{\tilde{s}_l}\int_{K_l}d^*k\int_{K_{l-1}}d^*\tilde{k}\prod_{j=1}^{l-1} {\rm sign}[\det \Pi_{j,l-1}\tilde{k}x'(k)\tilde{k}^*\Pi_{l-1,j}]^{\tilde{L}_j-\tilde{L}_{j+1}-1} |\det \Pi_{j,l-1}\tilde{k}x'(k)\tilde{k}^*\Pi_{l-1,j}|^{\tilde{s}_j-\tilde{s}_{j+1}-1}\\
	=&[{\rm sign}(\det \tilde{a})]^{\tilde{L}_n} |\det \tilde{a}|^{\tilde{s}_n}\int_{K_l}d^*k\Phi(\diag(\tilde{s}_1,\ldots,\tilde{s}_{l-1})-(\tilde{s}_l+1)\eins_{l-1},\diag(\tilde{L}_1,\ldots,\tilde{L}_{l-1})-(\tilde{L}_l+1)\eins_{l-1};x'(k))
\end{split}
\end{equation}
with the co-rank $1$ projection $x'(k)=\Pi_{l-1,l}k\tilde{a}k^*\Pi_{l,l-1}\in H_{l-1}$. Here, we have exploited $\Pi_{j,l}=\Pi_{j,l-1}\Pi_{l-1,l}$ for all $j=1,\ldots,l-1$ and $\Pi_{l-1,l}\diag(\tilde{k},1)=\tilde{k}\Pi_{l-1,l}$.  Now we can use again the unitary invariance of the spherical transform under $K_{l-1}$. The $l-1$ eigenvalues $a'$ of $x'(k)$ are distributed as follows.

\begin{theorem}[Distribution of a Co-Rank $1$ Projected Hermitian Matrix, see~{\cite[Proposition~4.2]{Baryshnikov2001}}]\label{thm:recursion}\

Let $\tilde{a}\in D_l$ be non-degenerate and fixed and $k\in K_n$ be Haar distributed. Then the eigenvalues $a'\in D_{l-1}$ of the $(l-1)\times(l-1)$ random matrix $x'(k)=\Pi_{l-1,l}k\tilde{a}k^*\Pi_{l,l-1}$ are distributed by
  \begin{equation}\label{proj}
  p(a'|\tilde{a}) =\frac{\Delta_{l-1}(a')}{\Delta_l(\tilde{a})} 
  \det   \left [ \begin{array}{c} \mathbf{1}_l \\ \Theta (a'_j)-\Theta ( \tilde{a}_k - a'_j)
   \end{array}\right]_{\substack{j=1,\dots, l - 1 \\ k =1,\dots, l}},
  \end{equation}
  with $\Theta$ the Heaviside step function and $\mathbf{1}_l=(1,\ldots,1)\in\mathbb{R}^l$.
\end{theorem}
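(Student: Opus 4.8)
The plan is to circumvent the group integral over $K_l$ implicit in the definition of $x'(k)$ and reduce everything to a one-dimensional change of variables. The key observation is that $x'(k)=\Pi_{l-1,l}k\tilde a k^*\Pi_{l,l-1}$ is exactly the principal $(l-1)\times(l-1)$ block of $H=k\tilde a k^*$ obtained by deleting the last row and column. The cofactor (Cramer) identity for a principal submatrix then gives the resolvent relation
\begin{equation}
\frac{\det(z\eins_{l-1}-x'(k))}{\det(z\eins_l-H)}=\bigl[(z\eins_l-H)^{-1}\bigr]_{ll}=\sum_{\alpha=1}^{l}\frac{|k_{l\alpha}|^2}{z-\tilde a_{\alpha}},
\end{equation}
where the last equality follows from diagonalising $H$ and noting that only the entries $k_{l\alpha}$ of the last row of $k$ survive. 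Consequently the eigenvalues $a'_1,\ldots,a'_{l-1}$ of $x'(k)$ are precisely the $l-1$ zeros of the rational function $\sum_\alpha p_\alpha/(z-\tilde a_\alpha)$ with residues $p_\alpha=|k_{l\alpha}|^2$.

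Next I would determine the law of $p=(p_1,\ldots,p_l)$. Since $k$ is Haar distributed on $K_l$, its last row is a uniformly random unit vector in $\mathbb{C}^l$, so $p$ is Dirichlet$(1,\ldots,1)$, i.e.\ uniform on the simplex $\{p_\alpha\geq0,\ \sum_\alpha p_\alpha=1\}$ with constant density $(l-1)!$ with respect to Lebesgue measure on the simplex. Because all residues are positive and the $\tilde a_\alpha$ are real and distinct, the function $\sum_\alpha p_\alpha/(z-\tilde a_\alpha)$ is strictly decreasing between consecutive poles, hence has exactly one zero in each gap; this forces the strict interlacing $\tilde a_{(1)}>a'_{(1)}>\cdots>a'_{(l-1)}>\tilde a_{(l)}$ and thereby fixes the support of $p(a'|\tilde a)$. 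Inverting the partial-fraction expansion yields the explicit inverse map $p_\alpha=\prod_{j}(\tilde a_\alpha-a'_j)/\prod_{\beta\neq\alpha}(\tilde a_\alpha-\tilde a_\beta)$.

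The remaining task is to push the uniform measure on the simplex forward through this map. I would invoke the Dixon--Anderson integral, whose probabilistic content is exactly that the pushforward of a Dirichlet$(s_1,\ldots,s_l)$ law under $p\mapsto a'$ has density proportional to $\Delta_{l-1}(a')\prod_{j,\alpha}|a'_j-\tilde a_\alpha|^{s_\alpha-1}\big/\prod_{\alpha<\beta}|\tilde a_\alpha-\tilde a_\beta|^{s_\alpha+s_\beta-1}$ on the interlacing region. Specialising to $s_\alpha=1$ collapses every factor $|a'_j-\tilde a_\alpha|^{s_\alpha-1}$ to unity and leaves the clean ratio $(l-1)!\,\Delta_{l-1}(a')/\Delta_l(\tilde a)$ on the interlacing polytope. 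Alternatively one can obtain the same factor directly by evaluating the Jacobian $\det[\partial p_\alpha/\partial a'_j]$, which is a Cauchy-type determinant that reduces to the quotient of Vandermonde determinants.

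The final step, and the one I expect to be the main obstacle, is to re-express ``$(l-1)!$ times the signed interlacing indicator'' as the single determinant in Eq.~\eqref{proj}. Expanding that determinant along the $\mathbf{1}_l$ row, the entries $\Theta(a'_j)-\Theta(\tilde a_\alpha-a'_j)$ are engineered so that the cofactor sum counts, with the correct signs, how many $\tilde a_\alpha$ fall on either side of each $a'_j$; this is what converts the hard interlacing constraint into a closed determinantal form that is manifestly symmetric in the $a'_j$ (the antisymmetry of the determinant compensating the antisymmetry of $\Delta_{l-1}(a')$). The delicate bookkeeping here is threefold: fixing the overall constant and sign, confirming that the determinant vanishes off the interlacing region, and---crucially, since $H_l^{(n)}$ is not positive definite---checking that the $\Theta(a'_j)$ terms correctly accommodate eigenvalues of either sign. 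Verifying this determinantal identity, for instance by induction on $l$ or by a careful sign analysis of the cofactor expansion, is the genuinely technical part; everything upstream is routine once the resolvent reduction is in place.
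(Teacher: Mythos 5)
The paper does not actually prove this statement: it is imported verbatim, with a citation, from Baryshnikov's \emph{GUEs and queues} (Proposition~4.2), and the only commentary the paper adds is the remark that the determinant encodes the interlacing condition. So there is no in-paper argument to compare against; what you have written is a correct, self-contained derivation along the standard route (resolvent/cofactor identity, Dirichlet$(1,\ldots,1)$ law of the squared entries of the last row of a Haar unitary, and the Dixon--Anderson integral or equivalently the Cauchy-type Jacobian of $a'\mapsto p$). Two of the bookkeeping worries you flag at the end dissolve on inspection. First, the $\Theta(a'_j)$ terms are immaterial: since the first row of the matrix is $\mathbf{1}_l$, adding $-\Theta(a'_j)$ times that row to row $j+1$ shows the determinant equals $(-1)^{l-1}\det\bigl[\mathbf{1}_l;\ \Theta(\tilde a_k-a'_j)\bigr]$, so no separate sign analysis for negative eigenvalues is needed; on the decreasingly ordered interlacing chamber this evaluates to $+1$ and it vanishes off interlacing (two rows or columns coincide). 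Second, the factor $(l-1)!$ does not need to be absorbed into the determinant: the determinant is just the signed indicator taking values in $\{0,\pm1\}$, and Eq.~\eqref{proj} is the density of the \emph{unordered} eigenvalues on $D_{l-1}\simeq\mathbb{R}^{l-1}$ (note $\Delta_{l-1}(a')\det[\cdots]$ is symmetric in the $a'_j$), so the $(l-1)!$ from the ordered-chamber computation is exactly the combinatorial factor of passing to unordered coordinates; one checks consistency via $\int_{\rm interlacing}\Delta_{l-1}(a')\,da'=\Delta_l(\tilde a)/(l-1)!$, the $s_\alpha\equiv1$ case of Dixon--Anderson. With those two points settled, your argument is complete.
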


\begin{remark}
We would like to point out that the determinant is equivalent with the interlacing condition~\cite{Baryshnikov2001} $\tilde{a}_1\leq a'_1\leq \tilde{a}_2\leq\ldots \leq \tilde{a}_{n-1}\leq a'_{n-1}\leq \tilde{a}_n$  after ordering the eigenvalues. This can be readily shown by considering the cases when this interlacing is not given. In this situation the determinant vanishes since at least two rows will be linearly dependent.
\end{remark}

We plug Eq.~\eqref{proj} into Eq.~\eqref{proof:spher.func.1} and obtain
\begin{equation}\label{proof:spher.func.2}
\begin{split}
	\Phi(\tilde{s},\tilde{L}; \tilde{a})=&\frac{[{\rm sign}(\det \tilde{a})]^{\tilde{L}_l} |\det \tilde{a}|^{\tilde{s}_l}}{\Delta_l(\tilde{a})}\int_{\mathbb{R}^{l-1}}da'\Delta_{l-1}(a')\det   \left [ \begin{array}{c} \mathbf{1}_l \\ \Theta (a'_j)-\Theta ( \tilde{a}_k - a'_j)
   \end{array}\right]_{\substack{j=1,\dots, n - 1 \\ k =1,\dots, l}}\\
   &\times\Phi(\diag(\tilde{s}_1,\ldots,\tilde{s}_{l-1})-(\tilde{s}_l+1)\eins_{l-1},\diag(\tilde{L}_1,\ldots,\tilde{L}_{l-1})-(\tilde{L}_l+1)\eins_{l-1};a').
\end{split}
\end{equation}
This is the recursion we are looking for. We want to point out that those $a'$ which have a degenerate spectrum belong to a set of measure zero and can be thus neglected.

In the next step we perform a complete induction. The case $l=1$ obviously yields Eq.~\eqref{spher:as.b} ($\Delta_1(\tilde{s})=\Delta_1(\tilde{a})=1$). Thus, let us assume that Eq.~\eqref{spher:as.b} is true for $l-1$. Then we have
\begin{equation}\label{proof:spher.func.3}
\begin{split}
	\Phi(\tilde{s},\tilde{L}; \tilde{a})=&\frac{[{\rm sign}(\det \tilde{a})]^{\tilde{L}_l} |\det \tilde{a}|^{\tilde{s}_l}}{\Delta_l(\tilde{a})}\int_{\mathbb{R}^{l-1}}da'\Delta_{l-1}(a')\det   \left [ \begin{array}{c} \mathbf{1}_l \\ \Theta (a'_j)-\Theta (\tilde{a}_k - a'_j)
   \end{array}\right]_{\substack{j=1,\dots, l - 1 \\ k =1,\dots, l}}\\
   &\times\left(\prod_{j=0}^{l-2}j!\right)\frac{\det[[{\rm sign}(a'_c)]^{\tilde{L}_b-\tilde{L}_l-1}|a'_c|^{\tilde{s}_b-\tilde{s}_l-1}]_{b,c=1,\ldots,l-1}}{\Delta_{l-1}(a')\Delta_{l-1}(\tilde{s}_1,\ldots,\tilde{s}_{l-1})}\\
   =&\left(\prod_{j=0}^{l-1}j!\right)\frac{[{\rm sign}(\det \tilde{a})]^{\tilde{L}_l} |\det \tilde{a}|^{\tilde{s}_l}}{\Delta_l(\tilde{a})\Delta_{l-1}(\tilde{s}_1,\ldots,\tilde{s}_{l-1})}\det   \left [ \begin{array}{c} \mathbf{1}_l \\ \int_0^{\tilde{a}_k} da'
 [{\rm sign}(a')]^{\tilde{L}_j-\tilde{L}_l-1}|a'|^{\tilde{s}_j-\tilde{s}_l-1}  \end{array}\right]_{\substack{j=1,\dots, l - 1 \\ k =1,\dots, l}},
\end{split}
\end{equation}
where we used a generalized version of Andrei\'ef's integral~\cite{KG2010} in the second equality. The integral in the determinant can be calculated as follows
\begin{equation}\label{proof:spher.func.4}
\begin{split}
\int_0^{\tilde{a}_k} da' [{\rm sign}(a')]^{\tilde{L}_j-\tilde{L}_l-1}|a'|^{\tilde{s}_j-\tilde{s}_l-1} =&[{\rm sign}(\tilde{a}_k)]^{\tilde{L}_j-\tilde{L}_l}\int_0^{|\tilde{a}_k|} da' {a'}^{\tilde{s}_j-\tilde{s}_l-1}=\frac{[{\rm sign}(\tilde{a}_k)]^{\tilde{L}_j-\tilde{L}_l}}{\tilde{s}_j-\tilde{s}_l}|\tilde{a}_k|^{\tilde{s}_j-\tilde{s}_l}.
\end{split}
\end{equation}
We plug this integral into~\eqref{proof:spher.func.3}, and pull the factors $1/(\tilde{s}_j-\tilde{s}_l)$ out which combine with $\Delta_{l-1}(\tilde{s}_1,\ldots,\tilde{s}_{l-1})$ to $(-1)^{l-1}\Delta_l(\tilde{s})$. The sign cancels with permuting the first row $\mathbf{1}_l$ completely through to the last one. Taking the remaining factor ${\rm sign}(\det \tilde{a})]^{\tilde{L}_l} |\det \tilde{a}|^{\tilde{s}_l}$ into the determinant, we find Eq.~\eqref{spher:as.b} for $l=n$ because of ${\rm sign}(\det \tilde{a})=\prod_{j=1}^l{\rm sign}(\tilde{a}_j)$.

What remains for the case $l=n$ is to uniquely extend this result to general $\widetilde{s}\in\mathbb{C}^l$. Here, we make use of Carlson's theorem~\cite{Mehta}. First, we can restrict ourselves to the situation when $|\tilde{a}_j|\leq1$ for all $j=1,\ldots,l$ because we can always use $\Phi(\tilde{s},\tilde{L};\tilde{a})=\tilde{a}_{\max}^{\bar{s}}\Phi(\tilde{s},\tilde{L}; \tilde{a}/\tilde{a}_{\max})$ with $\tilde{a}_{\max}=\max_{j=1,\ldots,l}\{|\tilde{a}_j|\}$ and $\bar{s}=\sum_{j=1}^l\tilde{s}_j$ when the spectrum of $\tilde{a}$ exceeds the unit circle. Then the integrand of the definition~\eqref{spher:as} is bounded and holomorphic  on the positive real half-plane for all $l$ variables $\delta \tilde{s}_j=\tilde{s}_j-\tilde{s}_{j+1}-1$. The same is true for the right hand side of Eq.~\eqref{spher:as.b}. Therefore, Carlson's theorem tells us that we can uniquely analytically extend the variables $\delta \tilde{s}_j$ to the whole complex plane for this equation. In particular, the equation~\eqref{spher:as.b} is true for all complex $\tilde{s}_j$ excluding the poles. This closes the proof for the case $l=n$.

Next, we set $\tilde{s}=\diag(s+(l-n)\eins_n,s^{(l-n)})$ and $\tilde{L}=\diag(L+(l-n)\eins_n,L^{(l-n)})$ in the proven identity for $\Phi(\tilde{s},\tilde{L}; \tilde{a})$ and then take the limit  $\tilde{a}\to\diag(a,0,\ldots,0)$. This leads to
\begin{equation}\label{proof:spher.func.5}
\begin{split}
&\Phi(\diag(s+(l-n)\eins_n,s^{(l-n)}),\diag(L+(l-n)\eins_n,L^{(l-n)}); \diag(a,0,\ldots,0))\\
=&\left(\prod_{j=0}^{n-1}\frac{(l-j)!\Gamma[s_j+1]}{\Gamma[s_j+l-n+1]}\right)\frac{\det[[{\rm sign}(a_c)]^{L_b}|a_c|^{s_b}]_{b,c=1,\ldots,n}}{\Delta_n(a)\Delta_n(s)}
\end{split}
\end{equation}
On the other hand, the spherical function is with these values equal to
\begin{equation}\label{proof:spher.func.6}
\begin{split}
&\Phi(\diag(s+(l-n)\eins_n,s^{(l-n)}),\diag(L+(l-n)\eins_n,L^{(l-n)}); \diag(a,0,\ldots,0))\\
=&\int_{K_l}d^*k\prod_{j=1}^{n} {\rm sign}[\det \Pi_{j,l}kxk^*\Pi_{l,j}]^{L_j-L_{j+1}-1} |\det \Pi_{j,l}kxk^*\Pi_{l,j}|^{s_j-s_{j+1}-1}\\
=&C_{l,n}(s)\Phi(s,L; \Pi_{l,n}a\Pi_{n,l}).
\end{split}
\end{equation}
A comparison of both formulas yields Eq.~\eqref{normalization.b} for the normalization $C_{l,n}(s)$ when choosing $a=\eins_n$, because $\Phi(s,L; \Pi_{l,n}\Pi_{n,l})=1$, and the result~\eqref{spher:as.b} when dividing the general result by $C_{l,n}(s)$.

Finally, the spherical function $\Psi(s;g)$ for a $g\in G_{l,m}^{(n)}$ immediately follows from the definition~\eqref{spher:s} and the identity~\eqref{spher:as.b}. This finishes the proof of Theorem~\ref{thm:spher.func}.

\subsection{Proof of Propositions~\ref{prop:fact.spher}}\label{Proof:fact.spher}

The factorization formula~\eqref{factorization.rect} for $\Psi$ immediately follows from the one for $\Phi$. Thence, we only need to prove the latter.

We choose a $g\in G_{l,m}^{(n_1)}$ and an $x\in H_m^{(n_2)}$  with $r=\min\{n_1,n_2\}$ the rank of the product $g_1g_2$ and consider $\delta s_j=s_j-s_{j+1}-1$ with $\RE\,\delta s_j>0$ for $j=1,\ldots, r$. As in the proof of Theorem~\ref{thm:spher.func} we can extend the result to the whole complex plane with the help of Carlson's theorem~\cite{Mehta}.  Thus, we omit this part and concentrate on proving Eq.~\eqref{factorization} for the situation with this restriction.

Considering the integral
\begin{equation}\label{proof:fact.spher.1}
\begin{split}
C_{l,r}(s)\int_{K_m}d^*k\Phi(s,L; gkxk^*g^*)=&\int_{K_m}d^*k\int_{K_l}d^*k' \prod_{j=1}^{r} {\rm sign}[\det \Pi_{j,l}k'gkxk^*g^*{k'}^*\Pi_{l,j}]^{L_j-L_{j+1}-1}\\
&\times|\det \Pi_{j,l}k'gkxk^*g^*{k'}^*\Pi_{l,j}|^{s_j-s_{j+1}-1},
\end{split}
\end{equation}
we perform a QR-decomposition of the rectangular matrix $k'g=t\Pi_{l,m}\tilde{k}$ with $t\in T_l$ and $\tilde{k}\in K_m$. Then we can exploit that $\Pi_{j,l} t=t_j\Pi_{j,l}$ with $t_j=\Pi_{j,l} t\Pi_{l,j}$ is also a lower triangular matrix, namely exactly the $j\times j$ upper left block of $t$. Moreover, the unitary matrix $\tilde{k}$ can be absorbed in the integration over $k\in K_m$ because of the invariance of the Haar measure. Collecting everything we have
\begin{equation}\label{proof:fact.spher.2}
\begin{split}
C_{l,r}(s)\int_{K_m}d^*k\Phi(s,L; gkxk^*g^*)=&\int_{K_m}d^*k\int_{K_l}d^*k' \prod_{j=1}^{r} {\rm sign}[\det t_j\Pi_{j,m}kxk^*\Pi_{m,j}t_j^*]^{L_j-L_{j+1}-1}\\
&\times|\det t_j\Pi_{j,m}kxk^*\Pi_{m,j}t_j^*|^{s_j-s_{j+1}-1}\\
=&\int_{K_l}d^*k'\prod_{j=1}^{r}[\det t_jt_j^*]^{s_j-s_{j+1}-1}\int_{K_m}d^*k\\
&\times \prod_{j=1}^{r} {\rm sign}[\det \Pi_{j,m}kxk^*\Pi_{m,j}]^{L_j-L_{j+1}-1}|\det \Pi_{j,m}kxk^*\Pi_{m,j}|^{s_j-s_{j+1}-1},
\end{split}
\end{equation}
where we employed $\Pi_{j,l}\Pi_{l,m}=\Pi_{j,m}$  for $j\leq r\leq l,m$.
We underline that $t_j$ only depends on $g$ and $k'$ but not on $k$ or $x$. When using $t_jt_j^*=\Pi_{j,l} tt^*\Pi_{l,j}=\Pi_{j,l} k'gg^*{k'}^*\Pi_{l,j}$ for all $j=1,\ldots,m$ and then normalize the integrals properly so that we identify them with the spherical transforms of $g$ and $x$, we find the claim.
The shifts in the $s$ and $L$ for each of the two cases result from the fact that $s_{r+1}=\max\{n_1,n_2\}-r-1$ and $L_{r+1}={\rm mod}_2(\max\{n_1,n_2\}-r-1)$ instead of $-1$ for both, according to the Definition~\eqref{def:spherical-functions}.

For the case of $n_2=r$, we encounter the quotient $C_{l,n_1}(\tilde{s})\ C_{m,n_2}(s)/C_{l,n_2}(s)$ which is equal to $C_{m,n_1}(\tilde{s})$, where $\tilde{s}=\diag(s+(n_1-n_2)\eins_{n_1-n_2},s^{(n_1-n_2)})$.
This ends the proof.

\subsection{Proof of Proposition~\ref{prop:inv}}\label{Proof:inv}

Since $A_n$ is a subset of $D_n$ we can concentrate us on proofing the inverse of the spherical transform of $S_\Phi$. Indeed when comparing the second lines of Eqs.~\eqref{Strafo:as} and~\eqref{Strafo:s} it becomes clear that the two transformations are identical for the domain $A_n$ since the $L$ depended part drops out. Therefore we also do not need to sum over $L\in\mathbb{Z}_2^n$ in the inverse.

We only need to show that Eq.~\eqref{Strafo:as:inv} holds for any $p_D\in L^{1,\mathbb{S}}(D_n)$ and when the eigenvalues $a\in D_n$ are non-degenerate. The inverse of the spherical transform $\mathcal{S}_{\Phi}$ of $\mathcal{S}_{\Phi}p_D$ is explicitly given by
\begin{equation}
\begin{split}
\mathcal{S}_{\Phi}^{-1}[\mathcal{S}_{\Phi}p_D](a)=&\sum_{L\in\{0,1\}^n}\frac{\Delta_n(a)}{(n!)^2}\lim_{\epsilon\to0}\int_{\mathbb{R}^n} \frac{ds}{(4\pi)^{n}}\left(\prod_{l=1}^n\zeta_n(\epsilon s_l)\right)\Delta_n(\imath s+s^{(n)})\\
&\times\det[[{\rm sign}(a_c)]^{L_b}|a_c|^{-\imath s_b-n+b-1}]_{b,c=1,\ldots,n}\biggl(\int_{D_n}d\tilde{a} p_D(\tilde{a})\frac{\det[[{\rm sign}(\tilde{a}_c)]^{L_b}|\tilde{a}_c|^{\imath s_b+n-b}]_{b,c=1,\ldots,n}}{\Delta_n(\tilde{a})\Delta_n(\imath s+s^{(n)})}\biggl).
\end{split}
\end{equation}
Both integrals over $s$ and $\tilde{a}$ are absolutely integrable. The integrability for $s$ is given due to the regularization with $\zeta$ and that the spherical transform $\mathcal{S}_{\Phi}P_H(\imath s+s^{(n)},L)$ is bounded on the integration domain  $s\in\mathbb{R}^n$ by $\int dx |P_H(x)|<\infty$. The integrability of $\tilde{a}$ results from the facts that $p_D$ is an $L^1$-function and the
term $\det[[{\rm sign}(\tilde{a}_c)]^{L_b}|\tilde{a}_c|^{\imath s_b+n-b}]_{b,c=1,\ldots,n}/\Delta_n(\tilde{a})$ is bounded on $\tilde{a}\in D_n$. The latter can be seen by noticing that its modulus is homogeneous in $\tilde{a}$ of order zero and that the poles of the denominator at the points where $\tilde{a}$ degenerates are compensated by the numerator. Thus we can interchange the integrals as long as the limit $\epsilon\to0$ stays in front of both integrals.

Cancelling some terms,  we find
\begin{equation}
\begin{split}
\mathcal{S}_{\Phi}^{-1}[\mathcal{S}_{\Phi}p_D](a)=&\frac{\Delta_n(a)}{(n!)^2}\lim_{\epsilon\to0}\int_{D_n}\frac{d\tilde{a}}{\Delta_n(\tilde{a})} p_D(\tilde{a})\sum_{L\in\{0,1\}^n}\int_{\mathbb{R}^n} \frac{ds}{(4\pi)^{n}}\left(\prod_{l=1}^n\zeta_n(\epsilon s_l)\right)\\
&\times\det[[{\rm sign}(a_c)]^{L_b}|a_c|^{-\imath s_b-n+b-1}]_{b,c=1,\ldots,n}\det[[{\rm sign}(\tilde{a}_c)]^{L_b}|\tilde{a}_c|^{\imath s_b+n-b}]_{b,c=1,\ldots,n}.
\end{split}
\end{equation}
The integral over $s$ and the sum over $L$ can be done with the aid of Andr\'eief's identity~\cite{An86},
\begin{equation}
\begin{split}
\mathcal{S}_{\Phi}^{-1}[\mathcal{S}_{\Phi}p_D](a)=& \frac{\Delta_n(a)}{n!\det|a|}\lim_{\epsilon\to0}\int_{D_n}\frac{d\tilde{a}}{\Delta_n(\tilde{a})} p_D(\tilde{a})\det\left[\int_{-\infty}^\infty \frac{ds}{2\pi}\zeta_n(\epsilon s)\frac{1+{\rm sign}(a_b\tilde{a}_c)}{2}\left|\frac{\tilde{a}_c}{a_b}\right|^{\imath s+n-b}\right]_{b,c=1,\ldots,n}\\
=& \frac{\Delta_n(a)}{n!\det|a|}\lim_{\epsilon\to0}\int_{D_n}\frac{d\tilde{a}}{\Delta_n(\tilde{a})} p_D(\tilde{a})\det\left[\frac{1+{\rm sign}(a_b\tilde{a}_c)}{2}\left|\frac{\tilde{a}_c}{a_b}\right|^{n-b}\frac{1}{\epsilon}\mathcal{F}\zeta_n\left(\frac{1}{\epsilon}{\rm ln}\left|\frac{\tilde{a}_c}{a_b}\right|\right)\right]_{b,c=1,\ldots,n},
\end{split}
\end{equation}
where the inverse Fourier transform of the regularizing function~\eqref{xi.def} is
\begin{equation}
\mathcal{F}\zeta_n(u)=\frac{1}{2\pi}\int_{-\infty}^\infty dz \zeta_n(z) e^{-\imath z u}=c\Theta(1-u^2)\cos^{2n-1}\left(\frac{\pi u}{2}\right).
\end{equation}
We recall that $\Theta(x)$ denotes the Heaviside step function.
The exact value of the constant $c\neq 0$ is not important. It only correctly normalizes $\mathcal{F}\zeta_n$ because $\zeta_n(0)=1$.

Due to the regularization the integration domain of $\tilde{a}\in D_n$ shrinks to $(\bigcup_{j=1}^n[-|a_j|e^{\epsilon},-|a_j|e^{-\epsilon}]\cup[|a_j|e^{-\epsilon},|a_j|e^{\epsilon}])^n$. The factor incorporating the signs projects to those intervals where $\tilde{a}$ has the same signs as $a$. Thus we only integrate over $\widehat{D}_\epsilon=\bigcup_{\omega\in\mathbb{S}}(\bigcup_{j=1}^n[a_{\omega(j)}e^{-{\rm sign}(a_{\omega(j)})\epsilon},a_{\omega(j)}e^{{\rm sign}(a_{\omega(j)})\epsilon}])^n$. We recall that the eigenvalues $a\in D_n$ are chosen to be non-degenerate. Therefore there is an $\epsilon_0>0$ such that $1/\Delta_n(\tilde{a})$ has no poles for all $0<\epsilon\leq\epsilon_0$, in particular it is uniformly bounded on $\widehat{D}_{\epsilon_0}$ and, thus, on $\widehat{D}_\epsilon\subset \widehat{D}_{\epsilon_0}$ for any $\epsilon\leq\epsilon_0$. Hence, we may expand the determinant in the numerator and get a factor $n!$ due to the symmetry of the integrand, 
\begin{equation}
\begin{split}
\mathcal{S}_{\Phi}^{-1}[\mathcal{S}_{\Phi}p_D](a)=& \lim_{\epsilon\to0}\frac{\Delta_n(a)}{\det |a|}\prod_{l=1}^n\int_{a_le^{-{\rm sign}(a_l)\epsilon}}^{a_le^{{\rm sign}(a_l)\epsilon}} d\tilde{a}_l\ \frac{p_D(\tilde{a})}{\Delta_n(\tilde{a})}\left(\prod_{j=1}^n\left|\frac{\tilde{a}_j}{a_j}\right|^{j-1}\frac{c}{\epsilon}\cos^{2n-1}\left(\frac{\pi }{2\epsilon}{\rm ln}\left|\frac{\tilde{a}_j}{a_j}\right|\right)\right)\\
=& \lim_{\epsilon\to0}\Delta_n(a)\prod_{l=1}^n\int_{|a_l|e^{-{\rm sign}(a_l)\epsilon}}^{|a_l|e^{{\rm sign}(a_l)\epsilon}} \frac{d\tilde{a}_l}{|a_l|} \frac{p_D({\rm sign}(a)|\tilde{a}|)}{\Delta_n({\rm sign}(a)|\tilde{a}|)}\left(\prod_{j=1}^n\left|\frac{\tilde{a}_j}{a_j}\right|^{j-1}\frac{c}{\epsilon}\cos^{2n-1}\left(\frac{\pi }{2\epsilon}{\rm ln}\left|\frac{\tilde{a}_j}{a_j}\right|\right)\right).
\end{split}
\end{equation}
The substitution $u_l={\rm sign}(a_l)/\epsilon\ {\rm ln}|\tilde{a}_l/a_l|$ yields
\begin{equation}
\begin{split}
\mathcal{S}_{\Phi}^{-1}[\mathcal{S}_{\Phi}p_D](a)=&\Delta_n(a)\lim_{\epsilon\to0}\prod_{l=1}^n\int_{-1}^{1} d\tilde{a}_l\ \frac{p_D(a\exp[{\rm sign}(a)\epsilon u])}{\Delta_n(a\exp[{\rm sign}(a)\epsilon u])}\left(\prod_{j=1}^nc\cos^{2n-1}\left(\frac{\pi u_j }{2}\right)e^{{\rm sign}(a_j)\epsilon j u_j}\right).
\end{split}
\end{equation}
The function $p_D$ is an $L^1$-function on $D_n$. The integrand is therefore an $L^1$-function on $[-1,1]^n$. With the same arguments as in the proof of~\cite[Lemma~2.6]{KK2016} we have
\begin{equation}
\begin{split}
\mathcal{S}_{\Phi}^{-1}[\mathcal{S}_{\Phi}p_D](a)=&\Delta_n(a)\lim_{\epsilon\to0}\prod_{l=1}^n\int_{-1}^{1} d\tilde{a}_l\ \frac{p_D(a)}{\Delta_n(a)}\left(\prod_{j=1}^nc\cos^{2n-1}\left(\frac{\pi u_j }{2}\right)e^{{\rm sign}(a_j)\epsilon j u_j}\right)=p_D(a)
\end{split}
\end{equation}
for those $a\in D_n$ that satisfy
\begin{equation}
\lim_{\epsilon\to0}\prod_{l=1}^n\int_{[-1,1]^n} d\tilde{a} |p_D(a+\epsilon\tilde{a})-p_D(a)|=0,
\end{equation}
which are almost all. This completes the proof.

\section{Proofs of Sec.~\ref{sec:conv}}

Herein, we present the two proofs of Theorem~\ref{thm:jpdf.fixed} and Proposition~\ref{prop:stat.poly} in Subsections~\ref{Proof:jpdf.fixed} and~\ref{Proof:stat.poly}, respectively. The basic ideas are very close to those exploited in~\cite{Kieburg2017,KFI2019}.

\subsection{Proof of Theorem~\ref{thm:jpdf.fixed}}\label{Proof:jpdf.fixed}

Let us denote the distribution of the P\'olya ensemble random matrix $g\in G_{l,m}^{(n_1)}$  by $Q_G\in L^{1,K}_{\rm Prob}(G_{l,m}^{(n_1)})$. Then, the distribution $P_H\in L^{1,K}_{\rm Prob}(H_{l}^{(n_1)})$ of the product $\tilde{x}=gxg^*\in H_l^{(n_1)}$ with   $x\in H_m^{(n_2)}$ fixed is certainly
\begin{equation}
P_H(\tilde{x}|x)=\int_{G_{l,m}^{(n_1)}}dg\delta(\tilde{x}-gxg^*)Q_G(g)
\end{equation}
with the Dirac delta function on $H_l^{(n_1)}$ with respect to the measure specified in Sec.~\ref{sec:pre}. The spherical transform thereof is equal to
\begin{equation}
\mathcal{S}_\Phi P_H(s,L|x)=\int_{G_{l,m}^{(n_1)}}dg\Phi(s,L;gxg^*)Q_G(g)=\int_{G_{l,m}^{(n_1)}}dg\int_{K_m}d^*k\Phi(s,L;gkxk^*g^*)Q_G(g),
\end{equation}
where we could introduce the Haar integral over $K_m$ due to the $K$-invariance of $Q_G$. The factorization in Proposition~\ref{prop:fact.spher} tells us that we have to consider two cases from now on.

First we consider the case $n_1\geq n_2$ which yields the spherical transform
\begin{equation}\label{proof.thm2.1}
\begin{split}
\mathcal{S}_\Phi P_H(s,L|x)=&C_{m,n_1}(\tilde{s})\mathcal{S}_\Psi Q_G(\tilde{s})\Phi(s,L;x)\\
=&\left(\prod_{j=1}^{n_2} \frac{\mathcal{M}\omega_{m-n_1}^{(n_1-n_2)}(s_j+1)}{\mathcal{M}\omega_{m-n_1}^{(n_1-n_2)}(n_2-j+1)}\right)\left(\prod_{j=1}^{n_2}\frac{\mathcal{M}\omega(s_j+n_1-n_2+1)}{\mathcal{M}\omega(n_1-j+1)}\right)\frac{\det[[{\rm sign}(a_c)]^{L_b}|a_c|^{s_b}]_{b,c=1,\ldots,n_2}}{\left(\prod_{j=0}^{n_2-1}1/j!\right)\Delta_{n_2}(a)\Delta_{n_2}(s)}
\end{split}
\end{equation}
with $\tilde{s}=\diag(s+(n_1-n_2)\eins_{n_2},s^{(n_1-n_2)})$. In the second line we exploited the result of Example~\ref{ex:proj}, the spherical transform~\eqref{spher.Polya.G} of a P\'olya ensemble on $G_{l,m}^{(n_1)}$, and the explicit form of the the spherical function $\Phi$, see Eq.~\eqref{spher:as.b}. The expression~\eqref{proof.thm2.1} can be simplified by pushing the numerators of the products into the determinant,
\begin{equation}\label{proof.thm2.2}
\begin{split}
\mathcal{S}_\Phi P_H(s,L|x)=&\left(\prod_{j=1}^{n_2} \frac{(j-1)!}{\mathcal{M}\omega_{m-n_1}^{(n_1-n_2)}(n_2-j+1)\mathcal{M}\omega(n_1-j+1)}\right)\\
&\times\frac{\det[[{\rm sign}(a_c)]^{L_b}|a_c|^{s_b}\mathcal{M}\omega_{m-n_1}^{(n_1-n_2)}(s_b+1)\mathcal{M}\omega(s_b+n_1-n_2+1)]_{b,c=1,\ldots,n_2}}{\Delta_{n_2}(a)\Delta_{n_2}(s)}.
\end{split}
\end{equation}
The term inside the determinant is equal to the Mellin transform $\mathcal{M}\tilde{\omega}_\geq(s_b+1,L|a_c)$ of the function
\begin{equation}
\tilde{\omega}_\geq(\tilde{a}|a_c)=\int_{0}^\infty da'_1 \int_0^\infty da'_2 \omega_{m-n_1}^{(n_1-n_2)}(a'_1){a'_2}^{n_1-n_2}\omega(a'_2)\delta(\tilde{a}-a'_1a'_2a_c).
\end{equation}
This time we employ the standard Dirac delta function on the real line whose evaluation leads to Eq.~\eqref{weight.fixed.g}. Here we want to point out the identity $\omega_{m-n_1}^{(n_1-n_2)}(a'_1)={a'_1}^{n_1-n_2}\omega_{m-n_1}^{(0)}(a'_1)$.

Comparison of Eq.~\eqref{proof.thm2.2} with the spherical transform~\eqref{spher.pol.H} of a polynomial ensemble on $H_{l}^{(n_2)}$ yields the first part of Theorem~\ref{thm:jpdf.fixed} due to the invertibility of this transform.

Let us go over to the opposite case $n_1\leq n_2$. For  this case the spherical transform of $P_H$ reads
\begin{equation}\label{proof.thm2.3}
\begin{split}
\mathcal{S}_\Phi P_H(s,L|x)=&C_{m,n_2}(\tilde{s})\mathcal{S}_\Psi Q_G(s)\Phi(\tilde{s},\tilde{L};x)\\
=&\left(\prod_{j=1}^{n_1} \frac{\mathcal{M}\omega_{m-n_2}^{(n_2-n_1)}(s_j+1)}{\mathcal{M}\omega_{m-n_2}^{(n_2-n_1)}(n_1-j+1)}\right)\left(\prod_{j=1}^{n_1}\frac{\mathcal{M}\omega(s_j+1)}{\mathcal{M}\omega(n_1-j+1)}\right)\\
&\times\frac{\prod_{j=0}^{n_2-1}j!}{\Delta_{n_2}(a)\Delta_{n_2}(\tilde{s})}\det\left[\begin{array}{c} [{\rm sign}(a_c)]^{L_b}|a_c|^{s_b} a_c^{n_2-n_1} \\  a_c^{n_2-n_1-d} \end{array}\right]_{\substack{b=1,\ldots,n_1 \\ d=1,\ldots,n_2-n_1 \\ c=1,\ldots,n_2}},
\end{split}
\end{equation}
where we now choose  $\tilde{s}=\diag(s+(n_2-n_1)\eins_{n_1},s^{(n_2-n_1)})$ and $\tilde{L}=\diag(L+(n_2-n_1)\eins_{n_1},L^{(n_2-n_1)})$. The Vandermonde determinant of $\tilde{s}$ is equal to
\begin{equation}
\Delta_{n_2}(\tilde{s})= (-1)^{n_1(n_2-n_1)+(n_2-n_1)(n_2-n_1-1)/2} \left(\prod_{j=0}^{n_2-n_1-1}j!\right)\left(\prod_{j=1}^{n_1}\frac{\Gamma[s_j+n_2-n_1+1]}{\Gamma[s_j+1]}\right)\Delta_{n_1}(s).
\end{equation}
The sign can be cancelled by reshuffling the rows in the determinant at the end of Eq.~\eqref{proof.thm2.3} and the Gamma functions can be combined with the Mellin transform $\mathcal{M}\omega_{m-n_2}^{(n_2-n_1)}$ as follows
\begin{equation}\label{proof.thm2.5}
\begin{split}
\frac{\Gamma[s_j+1]}{\Gamma[s_j+n_2-n_1+1]}\mathcal{M}\omega_{m-n_2}^{(n_2-n_1)}(s_j+1)=&\frac{\Gamma[s_j+1]}{\Gamma[s_j+n_2-n_1+1]}\frac{(m-n_2)!\Gamma[s_j+n_2-n_1+1]}{\Gamma[s_j+m-n_1+1]}\\
=&\frac{(m-n_2)!}{(m-n_1)!}\mathcal{M}\omega_{m-n_1}^{(0)}(s_j+1).
\end{split}
\end{equation}
Again, we pull the Mellin transforms in the denominator into the determinant and obtain
\begin{equation}\label{proof.thm2.6}
\begin{split}
\mathcal{S}_\Phi P_H(s,L|x)=&\left(\prod_{j=1}^{n_1} \frac{(m-n_2)!(n_2-j)!}{(m-n_1)!(n_1-j)!\mathcal{M}\omega_{m-n_2}^{(n_2-n_1)}(n_1-j+1)\mathcal{M}\omega(n_1-j+1)}\right)\\
&\times\frac{\prod_{j=0}^{n_1-1}j!}{\Delta_{n_2}(a)\Delta_{n_1}(s)}\det\left[\begin{array}{c} a_c^{b-1} \\ 
{[{\rm sign}(a_c)]^{L_d}}|a_c|^{s_d}  a_c^{n_2-n_1}\mathcal{M}\omega_{m-n_1}^{(0)}(s_d+1)\mathcal{M}\omega(s_d+1)  \end{array}\right]_{\substack{b=1,\ldots,n_2-n_1 \\ d=1,\ldots,n_1 \\ c=1,\ldots,n_2}}.
\end{split}
\end{equation}
The prefactor can be simplified as in~\eqref{proof.thm2.5}. The function in the last rows of the determinant are again a Mellin transform; this time it is of the function
\begin{equation}
\tilde{\omega}_<(\tilde{a}|a_c)= a_c^{n_2-n_1} \int_{0}^\infty da'_1 \int_0^\infty da'_2 \omega_{m-n_1}^{(0)}(a'_1)\omega(a'_2)\delta(\tilde{a}-a'_1a'_2a_c).
\end{equation}
It agrees with Eq.~\eqref{weight.fixed.l} after evaluating the Dirac delta function.

The inverse of the spherical transform, see Eq.~\eqref{Strafo:as:inv}, can be carried out via a generalized form of Andr\'eief's identity~\cite{KG2010} leading immediately to the result~\eqref{pfixed.l}. Indeed, this density is also a polynomial ensemble for which we make use of the elementary symmetric polynomials~\eqref{elementary} and the notation $a_{\neq j}=\diag(a_1,\ldots,a_{j-1},a_{j+1},\ldots,a_{n_2})\in D_{n_2-1}$. Those polynomials satisfy the bi-orthogonality relation
\begin{equation}\label{elem.orth}
\begin{split}
\sum_{c=1}^{n_2} a_c^{b-1}\frac{e_{n_2-b'}(-a_{\neq c})}{\prod_{h\neq c}(a_c-a_h)}=&\oint\frac{dz}{2\pi i z^{b'}} \sum_{c=1}^{n_2} a_c^{b-1} \prod_{h\neq c}\frac{a_h-z}{a_h-a_c}\\
=&\frac{1}{\Delta_{n_2}(a)}\oint\frac{dz}{2\pi i (-z)^{b'}} \sum_{c=1}^{n_2} (-1)^{c-1} a_c^b \det\left[\begin{array}{c} (-z)^{d-1} \\ a_f^{d-1} \end{array}\right]_{\substack{ f\neq c \\ d=1,\ldots,n_2 }}\\
=&\frac{(-1)^{b'}}{\Delta_{n_2}(a)}\sum_{c=1}^{n_2} (-1)^{c-1} a_c^{b-1} \det\left[\begin{array}{c} a_f^{d-1} \end{array}\right]_{\substack{ f\neq c \\ d\neq b'}}\\
=&\frac{(-1)^{b'}}{\Delta_{n_2}(a)}\det\left[a_f^{b-1}\ |\ a_f^{d-1} \right]_{\substack{  f=1,\ldots, n_2 \\ d\neq b'}}\\
=&\delta_{bb'}
\end{split}
\end{equation}
for all $b,b'=1,\ldots,n_2$. With the help of this relation, we extend the joint probability density by the determinant
\begin{equation}
\begin{split}
\det\left[\frac{e_{n_2-b'}(-a_{\neq c})}{\prod_{h\neq c}(a_c-a_h)}\right]_{c,b'=1,\ldots, n_2}=&\det\left[(-1)^{b'+c}\frac{\det\left[\begin{array}{c} a_f^{d-1} \end{array}\right]_{\substack{ f\neq c \\ d\neq b'}}}{\Delta_{n_2}(a)}\right]_{c,b'=1,\ldots, n_2}=\frac{1}{\det[a_c^{b-1}]_{c,b'=1,\ldots, n_2}}=\frac{1}{\Delta_{n_2}(a)}
\end{split}
\end{equation}
and multiply it with the other determinant where we employ the well-known rule $\det B\det C=\det BC$ of two equal sized matrices $B$ and $C$. This yields in the first $n_2-n_1$ rows the Kronecker delta $\delta_{bb'}$ in which one can expand the determinant.
This completes the proof.

\subsection{Proof of Proposition~\ref{prop:stat.poly}}\label{Proof:stat.poly}

\paragraph{The case $n_1\geq n_2$}

The proof of the case $n_1\geq n_2$ is based on the identity
\begin{equation}\label{proof.stat.poly.1}
\begin{split}
\int_{-\infty}^\infty d\tilde{a} \tilde{a}^{b}\tilde{\omega}_\geq(\tilde{a}|a)=&\int_{0}^\infty da'_1 \int_0^\infty da'_2 \omega_{m-n_1}^{(n_1-n_2)}(a'_1){a'_2}^{n_1-n_2}\omega(a'_2)(a'_1a'_2a)^{b}\\
=&\frac{(m-n_1)!(n_1+b-n_2)!}{(m+b-n_2)!}\mathcal{M}\omega(n_1+b-n_2+1) a^{b}.
\end{split}
\end{equation}
Thence, the polynomials
\begin{equation}
\tilde{p}_j(\tilde{a})=\oint \frac{dz}{2\pi i z}\chi_{M}^\geq(z)p_j\left(\frac{\tilde{a}}{z}\right)=\sum_{h=0}^{j}\frac{(m+h-n_1)!}{(m-n_1)!j!\mathcal{M}\omega(h+1)}c_{jh}\tilde{a}^h,
\end{equation}
with $c_{jh}$ the coefficients of the polynomial $p_j(a)$, compensate exactly those constants in front of the right hand side of Eq.~\eqref{proof.stat.poly.1}, meaning
\begin{equation}\label{proof.stat.poly.2}
\begin{split}
\int_{-\infty}^\infty d\tilde{a} \tilde{p}_j(\tilde{a})\tilde{\omega}_\geq(\tilde{a}|a)=&p_j(a).
\end{split}
\end{equation}
This can be exploited to show the bi-orthonormality
\begin{equation}
\begin{split}
\int_{-\infty}^\infty d\tilde{a} \tilde{p}_b(\tilde{a})\tilde{q}_{b'}(\tilde{a})=&\int_{-\infty}^\infty d\tilde{a} \tilde{p}_b(\tilde{a})\int_{-\infty}^\infty da \tilde{\omega}_\geq(\tilde{a}|a)q_{b'}(a=\int_{-\infty}^\infty da p_b(a)q_{b'}(a)=\delta_{bb'}
\end{split}
\end{equation}
for all $b,b'=0,\ldots,n_2-1$. The integrals can be interchanged because the integrand is absolutely integrable.

For the kernel we plug the definitions of $\tilde{p}_j$ and $\tilde{q}_j$ into Eq.~\eqref{kernel} for $\tilde{K}_{n_2}$. Additionally, we performed the change of coordinates $a\to\tilde{a}_2/a$. Afterwards, we interchange the two integrals  with the sum because it is finite and find the claim.

\paragraph{The case $n_1< n_2$}

This time we start from the identity
\begin{equation}\label{proof.stat.poly.3}
\begin{split}
\int_{-\infty}^\infty d\tilde{a} \tilde{a}^{b}\tilde{\omega}_<(\tilde{a}|a)=&a^{n_2-n_1}\int_{0}^\infty da'_1 \int_0^\infty da'_2\omega_{m-n_1}^{(0)}(a'_1)\omega(a'_2)(a'_1a'_2a)^{b}\\
=&\frac{(m-n_1)!b!}{(m+b-n_1)!}\mathcal{M}\omega(b+1) a^{n_2-n_1+b}
\end{split}
\end{equation}
for $n_1<n_2$. Moreover, we exploit the orthogonality of the weights  $\{q_{n_2-n_1+j}\}_{j=0,\ldots,n_1-1}$ to all polynomials of order $n_2-n_1-1$. Assuming $p_j(a)=\sum_{h=0}^j c_{jh}a^h$, then the following bi-orthonormality holds true
\begin{equation}
\int_{-\infty}^\infty da\, p_{n_2-n_1+b}(a)q_{n_2-n_1+b'}(a)=\int_{-\infty}^\infty da\, \sum_{h=0}^{b}c_{n_2-n_1+b,n_2-n_1+h}a^{n_2-n_1+h}\ q_{n_2-n_1+b'}(a)=\delta_{bb'}
\end{equation}
for all $b,b'=0,\ldots,n_1-1$. From now on the line of reasoning is exactly the same as for the case $n_1\geq n_2$ since the polynomials $\tilde{p}_j$ comprise those coefficients that cancel with the factors in~\eqref{proof.stat.poly.3} leading to the the truncation  $\sum_{h=0}^{b}c_{n_2-n_1+b,n_2-n_1+h}a^{n_2-n_1+h}$ of the original polynomials $p_j$ when integrating over $\tilde{\omega}_<(\tilde{a}|a)$. Thence, the bi-orthonormality is a consequence of the one between $p_j$ and $q_j$.

The formula~\eqref{ppoly.l} for the kernel is also directly found after interchanging the integrals with the finite sum. This finishes the proof of the proposition.


\begin{thebibliography}{???}

\bibitem{ARRS2016}
K.~Adhikari, N.~K.~Reddy, T.~R.~Reddy, and K.~Saha (2016): \textit{Determinantal point processes in the plane from products of random matrices}, Ann. Inst. H. Poincar\'e Probab. Statist. {\bf 52}, 16--46 [arXiv:1308.6817].

\bibitem{AB2012}
G.~Akemann and Z.~Burda (2012): \textit{Universal microscopic correlation functions for products of independent Ginibre matrices}, J. Phys. A {\bf 45}, 465201 [arXiv:1208:0187].

\bibitem{ABKN2014}
G.~Akemann, Z.~Burda, M.~Kieburg, and T.~Nagao (2014): \textit{Universal microscopic correlation functions for products of truncated unitary matrices} J. Phys. A {\bf 47}, 255202 [arXiv:1310.6395].

\bibitem{AI2015}
G.~Akemann and J.~Ipsen (2015): \textit{Recent exact and asymptotic results for products of independent random matrices}, Acta Phys. Pol. B {\bf 46}, 1747 [arXiv:1502.01667].

\bibitem{AIK2013}
G.~Akemann, J.~R.~Ipsen, and M.~Kieburg (2013): \textit{Products of rectangular random matrices:  Singular values and progressive scattering}, Phys. Rev. E {\bf 88}, 052118 [arXiv:1307.7560].

\bibitem{AKW2013}
G.~Akemann, M.~Kieburg, and L.~Wei (2013): \textit{Singular value correlation functions for products of Wishart matrices}, J.Phys. A {\bf 46}, 275205 [arXiv:1303.5694].

\bibitem{An86}
K.~A.~Andr\'eief (1886): \textit{Notes sur une relation les int\'egrales d\'efinies des produits des fonctions}, M\'em. de la Soc. Sci., Bordeaux {\bf 2}, 1--14.

\bibitem{Baryshnikov2001}
Y. Baryshnikov (2001): \textit{GUEs and queues}, Probab. Theory Relat. Fields {\bf 119}, 256--274.

\bibitem{Borodin1998}
A.~Borodin (1998): \textit{Biorthogonal ensembles}, Nucl. Phys. B {\bf 536}, 704–-732 [arXiv:math/9804027].
 
 \bibitem{CKW2015}
 T.~Claeys, A.~B.~J.~Kuijlaars, and D.~Wang (2015): \textit{Correlation kernels for sums and products of random matrices}, Random Matrices: Theory Appl. {\bf 4}, 1550017 [arXiv:1505.00610].

\bibitem{CHN2017}
B.~Collins, P.~Hayden, and I.~Nechita (2017): \textit{Random and free positive maps with applications to entanglement detection}, nt. Math. Res. Not. {\bf 2017}, 869--894 [arXiv:1505.08042].

\bibitem{GN1957}
 I.~M.~Gelfand and M.~A.~Na\u{\i}mark (1957), \textit{Unit\"are Darstellungen der klassischen Gruppen}, Akademie-Verlag, Berlin (1957), translated from Russian:  Trudy Mat. Inst. Steklov. {\bf 36}, 288 (1957).

\bibitem{Forrester2014}
P.~J.~Forrester (2014): \textit{Eigenvalue statistics for product complex Wishart matrices}, J. Phys. A {\bf 47}, 345202 [arXiv:1401.2572].
  
\bibitem{FIL2018}
P.~J.~Forrester, J.~R.~Ipsen, and D.-Z.~Liu (2018): \textit{Matrix product ensembles of Hermite-type  and the Hyperbolic Harish-Chandra–Itzykson–Zuber Integral}, Annales Henri Poincar\'e {\bf 19}, 1307--1348 [arXiv:1702.07100].
  
 \bibitem{FILZ2019}
 P.~J.~Forrester, J.~R.~Ipsen, D.-Z.~Liu, and L.~Zhang (2019): \textit{Orthogonal and symplectic Harish-Chandraintegrals and matrix product ensembles}, Random Matrices Theory Appl. , DOI: https://doi.org/10.1142/S2010326319500151 \qquad [arXiv:1711.10691] (2017).

\bibitem{FKK2017}
Y.-P.~F\"orster, M.~Kieburg, and H.~K\"osters (2017): \textit{Polynomial Ensembles and P\'olya Frequency Functions}, arXiv:1710.08794.

\bibitem{HN2018}
B.~Hanin and M.~Nica (2018): \textit{Products of Many Large Random Matrices and Gradients in Deep Neural Networks}, arXiv:1812.05994.

\bibitem{Helgason2000}
S.~Helgason (2000): \textit{Groups and Geometric Analysis. Integral geometry, invariant differential operators, and spherical functions}, Corrected reprint of the 1984 original, Mathematical Surveys and Monographs, vol. {\bf 83}, American Mathematical Society, Providence, RI.

\bibitem{IK2014}
J.~R.~Ipsen and M.~Kieburg: \textit{Weak Commutation Relations and Eigenvalue Statistics for Products of Rectangular Random Matrices}, Phys. Rev. E {\bf 89}), 032106 [arXiv:1310.4154].

\bibitem{Kieburg2017}
M.~Kieburg (2017): \textit{Additive Matrix Convolutions of Pólya Ensembles and Polynomial Ensembles}, accepted for publication in  Random Matrices Theory Appl. [arXiv:1710.09481].

\bibitem{Kieburg2019}
M.~Kieburg (2019): \textit{Hard Edge Statistics of Products of P\'olya Ensembles and Shifted GUE's}, arXiv:1909.04593.

\bibitem{KAAC2019}
M.~Kieburg, G.~Akemann, G.~Alfano, and G.~Caire (2019): \textit{Closed-form performance analysis of linear MIMO receivers in general fading scenarios}, to appear in IEEE Xplore [arXiv:1901.09740].

\bibitem{KFI2019}
M.~Kieburg, P.~J.~Forrester, and J.~Ipsen (2019): \textit{Multiplicative convolution of real asymmetric and real anti-symmetric matrices}, Advances in Pure and Applied Mathematics, DOI: https://doi.org/10.1515/apam-2018-0037 \qquad [arXIv:1712.04916].

\bibitem{KG2010}
M.~Kieburg and T.~Guhr (2010): \textit{Derivation of determinantal structures for random matrix ensembles in a new way}, J. Phys. A {\bf 43}, 075201 [arXiv:0912.0654].

\bibitem{KK2016}
M.~Kieburg, and H.~K\"osters (2016): \textit{Exact  relation  between  singular  value  and  eigenvalue  statistics}, Random Matrices Theory Appl. {\bf 5}, 1650015 [arXiv:1601.02586].

\bibitem{KK2019}
M.~Kieburg, and H.~K\"osters (2019): \textit{Products  of  Random  Matrices  from  Polynomial  Ensembles}, Ann. Inst. H. Poincar\'e Probab. Statist. {\bf 55}, 98--126 [arXiv:1601.03724].

\bibitem{KKS2016}
M.~Kieburg, A.~B.~J. Kuijlaars and D.~Stivigny (2016): \textit{Singular value statistics of matrix products with truncated unitary matrices}, Int. Math. Res. Notices {\bf 2016}, 33923424 [arXiv:1501.03910].

\bibitem{Kuijlaars2016}
A.~B.~J.~Kuijlaars (2016): \textit{Transformations of polynomial ensembles}, In ``Modern Trends in Constructive Function Theory'', Amer. Math. Soc. {\bf 2016}, 253 [arXiv:1501.05506].
 
\bibitem{KR2016}
 A.~B.~J.~Kuijlaars and P.~Rom\'an (2016): \textit{Spherical functions approach to sums of random Hermitian matrices}, [arXiv:1611.08932].

\bibitem{KS2014}
A.~B.~J.~Kuijlaars and D.~Stivigny (2014): \textit{Singular values of products of random matrices and polynomial ensembles}, Random Matrices Theory Appl. {\bf 3}, 1450011 [arXiv:1404.5802].

\bibitem{KZ2014}
A. ~B. ~J.~ Kuijlaars  and  L.~Zhang (2014): \textit{Singular  values  of  products  of  Ginibre  matrices,  multiple orthogonal  polynomials  and  hard  edge  scaling  limits},  Comm. Math. Phys. {\bf 332},  759--781 [arXiv:1308.1003].

\bibitem{Lakshminarayan2013}
A.~Lakshminarayan (2013): \textit{On the number of real eigenvalues of products of random matrices and an application to quantum entanglement}, J. Phys. A {\bf 46}, 152003 [arXiv:1301.7601].

\bibitem{LC2018}
Z. Liao and R. Couillet (2018): \textit{The Dynamics of Learning: A Random Matrix Approach}, Proc. Mach. Learn.
Res. {\bf 80}, 3072--3081 [arXiv:1805.11917].

\bibitem{Liu2017}
D.-Z. Liu (2017): \textit{Spectral statistics for product matrix ensembles of Hermite type with external source}, arXiv:1706.06189.

\bibitem{LW2016}
D.-Z.~Liu and Y.~Wang: \textit{Universality for Products of Random Matrices I: Ginibre and Truncated Unitary Cases},  Int. Math. Res. Notices {\bf 2016}, 3473–-3524 [arXiv:1411.2787].

\bibitem{Mehta}
 M.~L.~Mehta (2004): \textit{Random Matrices}, 3rd ed., Elsevier, Amsterdam.
 
 \bibitem{Muttalib1995}
 K.~A.~Muttalib (1995): \textit{Random  matrix  models  with  additional  interactions}, J. Phys. A {\bf 28}, L159-–L164.
 
 \bibitem{Polya1913}
 G.~P\'olya (1913): \textit{\"Uber Ann\"aherung durch Polynome mit lauter reellen Wurzeln}, Rend. di Palermo {\bf 36}, 279-–295.
 
 \bibitem{Polya1915}
  G.~P\'olya (1915): \textit{Algebraische  Untersuchungen  \"uber  ganze  Funktionen  vom  Geschlechte  Null  und Eins}, Journal f\"ur Mathematik {\bf 145}, 224--249.

\bibitem{RSZ2011}
W.~Roga, M.~Smaczynski, and K. \.Zyczkowski (2011): \textit{Composition of quantum operations and products of random matrices},    Acta Phys. Pol.  B {\bf 42}, 1123--1140 [arxiv:1105.3830].

\bibitem{Speicher2011}
R.~Speicher (2011): \textit{Free probability theory}, chapter 22 in \textit{The Oxford Handbook of Random Matrix Theory}, 1st edn., editors: G.~Akemann, J.~Baik and P.~Di~Francesco, Oxford University Press, Oxford [arXiv:0911.0087].

\bibitem{TWJTN2019}
W.~Tarnowski, P.~Warcho\l, S.~Jastrz\k{e}bski, J.~Tabor, and M.~A.~Nowak (2019): \textit{Dynamical Isometry is Achieved in Residual Networks in a Universal Way for any Activation Function}, Proceedings of the 22nd International Conference on Artificial Intelligence and Statistics (AISTATS) 2019, Naha, Okinawa, Japan.PMLR: Volume {\bf 89} [arXiv:1809.08848].

\bibitem{TV2004}
A.~M.~Tulino  and  S.~Verd\'u (2004): \textit{Random  Matrix  Theory  and  Wireless  Communications},  Foundations and Trends in Communications and Information Theory {\bf 1}, 1--182.

\bibitem{Voiculescu1991}
D.~Voiculescu (1991): \textit{Limit laws for Random matrices and free products}, Invent. Math. {\bf 104}, 201.

\bibitem{WZCT2015}
L.~Wei, Z.~Zheng, J.~Corander, and G.~Taricco (2015): \textit{On the outage capacity of orthogonal space-time block codes over multi-cluster scattering MIMO channels}, IEEE Transactions on Communications {\bf 63}, 1700--1711.


\end{thebibliography}
\end{document}